\def\NZQ{\Bbb}               
\def\NN{{\NZQ N}}
\def\kk{{\Bbbk}}
\def\B'c{{\mathcal{B'}}}
\def\U'c{{\mathcal{U'}}}
\def\xb{{\bold x}}
\def\opn#1#2{\def#1{\operatorname{#2}}} 
\opn\chara{char}
\opn\length{\ell}
\opn\projdim{proj\,dim}
\opn\injdim{inj\,dim}
\opn\ini{in}
\opn\rank{rank}
\opn\height{ht}
\opn\Tiefe{Tiefe}
\opn\grade{grade}
\opn\height{ht}
\opn\embdim{emb\,dim}
\opn\codim{codim}
\opn\Tr{Tr}
\opn\bigrank{big\,rank}
\opn\superheight{superheight}\opn\lcm{lcm}
\opn\trdeg{tr\,deg}%
\opn\reg{reg}
\opn\lreg{lreg}
\opn\deg{deg}
\opn\lcm{lcm}
\opn\set{set}
\opn\ara{ara}
\opn\bight{bight}
\opn\div{div}
\opn\Div{Div}
\opn\cl{cl}
\opn\Cl{Cl}
\opn\Spec{Spec}
\opn\Supp{Supp}
\opn\supp{supp}
\opn\Sing{Sing}
\opn\Ass{Ass}
\opn\Min{Min}
\opn\Ann{Ann}
\opn\Rad{Rad}
\opn\Soc{Soc}
\opn\Ker{Ker}
\opn\Coker{Coker}
\opn\Im{Im}
\opn\Hom{Hom}
\opn\Tor{Tor}
\opn\Ext{Ext}
\opn\End{End}
\opn\Aut{Aut}
\opn\id{id}
\opn\nat{nat}
\opn\GL{GL}
\opn\SL{SL}
\opn\mod{mod}
\opn\ord{ord}
\opn\depth{depth}
\opn\set{set}
\opn\Shad{Shad}
\opn\pd{pd}
\opn\aff{aff}
\opn\con{conv}
\opn\relint{relint}
\opn\st{st}
\opn\lk{lk}
\opn\cn{cn}
\opn\core{core}
\opn\vol{vol}
\opn\gr{gr}
\opn\d{d}
\opn\Ind{Ind}
\opn{\indmat}{indmat}
\opn\diam{diam}
\opn\set{\mbox{\rm{set}}}
\opn\cochord{\mbox{\rm{cochord}}}
\opn \kk{{\Bbbk}}
\opn \mb{{\mathbf{m}}}
\opn\xb{{\mathbf{x}}}
\opn\dist{\mbox{\rm{d}}}
\def\pot#1#2{#1[\kern-0.28ex[#2]\kern-0.28ex]}
\opn\dirlim{\underrightarrow{\lim}}
\opn\invlim{\underleftarrow{\lim}}
\def\pnt{{\raise0.5mm\hbox{\large\bf.}}}
\def\Implies{\ifmmode\Longrightarrow \else
     \unskip${}\Longrightarrow{}$\ignorespaces\fi}
\def\implies{\ifmmode\Rightarrow \else
     \unskip${}\Rightarrow{}$\ignorespaces\fi}
\def\iff{\ifmmode\Longleftrightarrow \else
     \unskip${}\Longleftrightarrow{}$\ignorespaces\fi}
\newtheorem{Theorem}{Theorem}[section]
\newtheorem{Lemma}[Theorem]{Lemma}
\newtheorem{Corollary}[Theorem]{Corollary}
\newtheorem{Proposition}[Theorem]{Proposition}
\newtheorem{Remark}[Theorem]{Remark}
\newtheorem{Example}[Theorem]{Example}
\newtheorem{Definition}[Theorem]{Definition}
\let\epsilon=\varepsilon
\let\phi=\varphi
\let\kappa=\varkappa
\title{The Line graph of a tree and its edge ideal}
\author{Anda Olteanu}
\address{Faculty of Marine Engineering, ``Mirceal cel B\u atr\^an" Naval Academy, Fulgerului Street, no. 1
	900218 Constanta, Romania,} \email{olteanuandageorgiana@gmail.com}
\begin{document}
	
	\maketitle
	\begin{abstract}  We describe all the trees with the property that the corresponding edge ideal of their line graph has a linear resolution. As a consequence, we give a complete characterization of those trees $T$ for which the line graph $L(T)$ is co-chordal. We compute also the second Betti number of the edge ideal of $L(T)$ and we determine the number of cycles in $\overline{L(T)}$. As a consequence, we obtain also the first Zagreb index of a graph. For edge ideals of line graphs of caterpillar graphs we determine the Krull dimension, the Castelnuovo-Mumford regularity, and the projective dimension under some additional assumption on the degrees of the cutpoints.
	\end{abstract}

\section*{Introduction}
Firstly used under this name by Harary and Norman in \cite{HN}, line graphs have been intensively studied in combinatorics. Recall that, for a finite simple graph $G$, its line graph, denoted by $L(G)$, is the graph with the vertex set given by the edges of $G$ and two vertices in $L(G)$ are joined by an edge if the corresponding edges are adjacent in $G$. 

There are several characterizations of graphs which are lines of some graphs. For instance, Beineken characterized the line graphs in terms of the forbidden induced subgraphs \cite{B}. Moreover, combinatorial properties of line graphs have been studied: Akiyama solved seven graph equations which involved line graphs, powers of graphs and the complementary of a graph \cite{A} and Milani\v{c}, Oversberg and Schaudt gave a characterization of those line graphs which are squares of graphs \cite{MOS}. Moreover, properties of line graphs have been determined in \cite{C,Ch,LC,VKA}. 

Recently, from the commutative algebra and algebraic topology point of view algebraic properties of the clique complex of the line graphs \cite{N} and their topology \cite{GSS} have been studied. From the combinatorial point of view, it is of interest to determine which properties of the graph $G$ are preserved by the line graph $L(G)$. For instance, it is known that the property of $G$ of being chordal is not preserved by its line graph \cite{C}. 

In this paper we consider edge ideals of the line graphs of trees and determine algebraic and homological properties which are expressed in terms of the original tree. 

The structure of the paper is the following: in the first section we recall all the necessary notions and results both from graph and commutative algebra. The second section is devoted to the study of the algebraic and homological invariants of edge ideals of line graphs of trees. We give a complete characterization of the trees $T$ for which the edge ideal of their line graph has a linear resolution [Theorem~\ref{linres}]. Moreover, for a tree $T$, we compute the second Betti number of edge ideals of line graphs of $T$ which will be very useful in computing the number of induced cycles in $\overline{L(T)}$.
In the last section we pay attention to caterpillar graphs, which are a particular class of trees which are of high interest in combinatorics. For edge ideals of line graphs of caterpillar trees, we compute the Castelnuovo--Mumford regularity, the projective dimension and the Krull dimension under some additional assumptions on the degrees of the cutpoints. These results allow us to determine the sizes of the largest and the minimal vertex cover and of the largest induced matching. In the end of the paper, we consider several remarks that arise naturally on the directions that one could consider. . 
    
\section{Preliminaries}
We review some standard facts on graph theory and edge ideals and we setup the notation and terminology that will be used through the paper. For more details, one may see \cite{DHS,F,HaTu1,HeHi,MV,Vi}. 
\subsection{Notions from graph theory}
Let $G$ be a finite simple graph with the vertex set $V(G)$ and the set of edges $E(G)$. Two vertices $u,v\in V(G)$ are called \textit{adjacent} (or \textit{neighbors}) if they form an edge in $G$. For a vertex $u$ of $G$, we denote by $\mathcal{N}(u)$ the set of all the neighbors of $u$, also called the \textit{neighborhood} of $u$. More precisely, $\mathcal{N}(u)=\{v\in V(G)\,:\,\{u,v\}\in E(G)\}$. \textit{The degree of the vertex $u$}, denoted by $\deg u$, is defined to be the size of the neighborhood set of $u$, that is $\deg u=|\mathcal{N}(u)|$. By \textit{a free vertex} we mean a vertex of degree $1$. A \textit{pendant edge} (or a \textit{whisker}) is an edge which contains a free vertex. A graph is called \textit{complete} if it has the property that any two vertices are adjacent. We denote by $\mathcal{K}_n$ the complete graph with $n$ vertices. 

By \textit{a subgraph} $H$ of $G$ we mean a graph with the property that $V(H)\subseteq V(G)$ and $E(H)\subseteq E(G)$. One says that a subgraph $H$ of $G$ is \textit{induced} if whenever $u,v\in V(H)$ so that $\{u,v\}\in E(G)$ then $\{u,v\}\in E(H)$. \textit{A clique} in $G$ is an induced subgraph which is a complete graph. \textit{A bridge} of a connected graph $G$ is an edge whose removal disconnects $G$, while a \textit{cutpoint} of $G$ is a vertex $u$ of $G$ such that the removal of $u$ and all its incident edges results in a disconnected graph.

\textit{A path of length }$t\geq2$ in $G$ is, by definition, a set of distinct vertices $u_0,u_1,\ldots,u_t$ such that $\{u_i,u_{i+1}\}$ are edges in $G$ for all $i\in\{0,\ldots,t-1\}$. \textit{The distance between two vertices $u$ and $v$ in $G$}, denoted by $\dist_G(u,v)$, is defined to be the length of a shortest path joining $u$ and $v$. If there is no path joining $u$ and $v$, then $\dist_G(u,v)=\infty$. We will skip the name of the graph when no confusion can occur. \textit{The diameter of the graph $G$}, denoted by $\diam(G)$, is defined to be the maximum of all the distances between any two vertices in $G$, namely $$\diam(G)=\max\{\dist(u,v): \ u,v\in V(G)\}.$$

\textit{A cycle of length $n\geq3$}, usually denoted by $C_n$, is a graph with the vertex set $[n]=\{1,\ldots,n\}$ and the set of edges $\{i,i+1\}$, where $n+1=1$ by convention. A graph is \textit{chordal} if it does not have any induced cycles of length strictly greater than $3$. A graph is called a \textit{tree} if it is connected and it does not have cycles. It is easy to see that any tree is a chordal graph. Moreover the vertices of a tree are either free or cutpoints.

For a graph $G$, we denote by $\overline{G}$ \textit{the complement of the graph} $G$, that is the graph with the same vertex set as $G$ and $\{u,v\}$ is an edge of $\overline{G}$ if it is not an edge of $G$. A graph $G$ is called \textit{co-chordal} if $\overline{G}$ is a chordal graph. One says that the edges $\{x,y\}$ and $\{u,v\}$ of $G$ form \textit{an induced gap} in $G$ if $x,y,u,v$ are the vertices of an induced cycle of length $4$ in $\overline{G}$. A graph $G$ is called \textit{gap-free} if it does not contain any induced gap.

	Let $G=(V(G),E(G))$ be a finite simple graph. \textit{The line graph} of the graph $G$, denoted by $L(G)$, is defined to have as its vertices the edges of $G$, and two vertices in $L(G)$ are adjacent if the corresponding edges in $G$ share a vertex in $G$.

There are several characterizations of those graphs which are line graphs of a graph. We recall here the one that will be used through the paper, but one can see \cite{B} for a characterization in terms of the forbidden induced subgraphs:
\begin{Proposition}\cite{K}\label{kn}
	A graph $G$ is a line graph if the edges of $G$ can be partitionated into maximal complete subgraphs such that no vertex lies in more than two of the subgraphs
\end{Proposition}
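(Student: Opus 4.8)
The plan is to prove this characterization by an explicit construction: given a partition of $E(G)$ into complete subgraphs in which every vertex of $G$ lies in at most two of the parts, I will produce a graph $H$ with $L(H)\cong G$. (The opposite implication, which the statement does not assert but which completes the picture, is straightforward: if $G=L(H)$ then for each $v\in V(H)$ the set $Q_v$ of edges of $H$ incident to $v$ is a complete subgraph of $L(H)$; these cover every edge of $L(H)$, since an edge of $L(H)$ joins two edges of $H$ meeting at a common vertex, and since in a simple graph two distinct edges meet in at most one vertex each edge of $L(H)$ lies in exactly one $Q_v$; finally a vertex of $L(H)$ is an edge $\{u,v\}$ of $H$ and hence belongs only to $Q_u$ and $Q_v$.)

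For the asserted direction, write $\mathcal{C}=\{C_1,\dots,C_k\}$ for the given partition of $E(G)$ into complete subgraphs, each vertex of $G$ lying in at most two of them. First I would normalize $\mathcal{C}$: adjoin a new singleton clique $\{v\}$ for every vertex $v$ lying in exactly one $C_i$, and two new distinct singleton cliques on $v$ for every isolated vertex $v$ of $G$; call the enlarged family $\mathcal{C}'$. Then every vertex of $G$ lies in exactly two distinct members of $\mathcal{C}'$. Define $H$ to have vertex set $\mathcal{C}'$, let $e_v$ be the pair formed by the two members of $\mathcal{C}'$ containing $v$, and set $E(H):=\{e_v : v\in V(G)\}$. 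The hypothesis that each edge of $G$ lies in a \emph{unique} part of $\mathcal{C}$ is precisely what prevents two distinct vertices of $G$ from sharing two common cliques of $\mathcal{C}'$; consequently each $e_v$ is a genuine non-loop edge and $v\mapsto e_v$ is injective, so, $E(H)$ being its image, $v\mapsto e_v$ is a bijection $V(G)\to E(H)$ and $H$ is a simple graph.

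It remains to check that this bijection is an isomorphism $G\to L(H)$, i.e. that for distinct $v,w\in V(G)$ one has $\{v,w\}\in E(G)$ if and only if $e_v$ and $e_w$ share an endpoint of $H$. Both directions reduce to one observation: $v$ and $w$ lie in a common part of $\mathcal{C}$ exactly when $\{v,w\}\in E(G)$ --- the ``if'' because the edge $\{v,w\}$ must lie in some part, the ``only if'' because a part is a complete subgraph and hence contains every pair among its vertices. A common member of $\mathcal{C}'$ for two distinct vertices cannot be a singleton, so it is one of the $C_i$; hence ``$v$ and $w$ share a clique of $\mathcal{C}'$'' is the same as ``$e_v\cap e_w\neq\emptyset$'', which is adjacency in $L(H)$.

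The essential difficulty is not any deep step but careful bookkeeping around degeneracies, which is where I would concentrate: one must guarantee that $H$ is simple (no loops, handled by making the two cliques through each vertex distinct in the normalization; no multiple edges, handled by uniqueness of the part containing a given edge of $G$), one must treat isolated vertices of $G$ on their own, and, if one wants the literal statement with \emph{maximal} complete subgraphs, one must argue that the decomposition may be taken to consist of maximal cliques --- which can require a different choice in small cases, since for example $L(\mathcal{K}_3)$ equals $\mathcal{K}_3$, whose natural ``star'' decomposition into two-element cliques is not maximal, although $\mathcal{K}_3$ also arises as the line graph of the three-edge star with a maximal decomposition. I expect these edge cases, rather than the construction itself, to demand the most attention.
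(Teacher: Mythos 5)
The paper states this proposition purely as a citation to Krausz and offers no proof of its own, so there is no in-paper argument to compare against; judged on its own terms, your construction is correct and is exactly the classical Krausz argument: normalize the clique family so that every vertex of $G$ lies in exactly two members, take the enlarged family as the vertex set of $H$ and the map $v\mapsto e_v$ as the edge set, and check that this is an adjacency-preserving bijection onto $E(H)=V(L(H))$, with simplicity of $H$ (no loops, no repeated edges) coming precisely from the fact that each edge of $G$ lies in a unique part. Your treatment of the degenerate cases (vertices in only one clique, isolated vertices) is sound, and your observation that maximality of the cliques is not actually needed for the implication asserted here is correct.
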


Since we will use the degrees of the vertices (both in $G$ and $L(G)$), we will recall here some formal definitions: 
\begin{Definition}\cite{Ch}\label{degedge}\rm
	Let $G$ be a graph and $e=\{u,v\}$ an edge. \textit{The degree of $e$} in $G$ is $\deg_G e=\deg u+\deg v-2$. Looking at $e$ as a vertex in $L(G)$, \textit{the degree of the vertex $e$ in $L(G)$} is equal with the degree of the edge $e$ in $G$.
\end{Definition}
\begin{Proposition}\cite[Proposition 1]{Ch}
	A necessary and sufficient condition that a vertex $w$ of the line graph $L(G)$ of a connected graph $G$ be a cutpoint is that it corresponds to a bridge $e = \{u,v\}$ of $G$ in which neither of the vertices $u$ and $v$ has degree one.
\end{Proposition}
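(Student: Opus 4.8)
The plan is to reduce the statement to an elementary connectivity question about $G-e$. The pivotal observation is the identity $L(G)-w=L(G-e)$: both graphs have vertex set $E(G)\setminus\{e\}=E(G-e)$, and in each of them two such edges are adjacent precisely when they share an endpoint in $G$, equivalently in $G-e$ (deleting an edge does not change the vertex set). Since $G$ is connected and contains the edge $e$, the graph $L(G)$ is connected, and hence $w$ is a cutpoint of $L(G)$ if and only if $L(G-e)$ is disconnected.

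Next I would record the standard fact that, for any graph $H$, the line graph $L(H)$ is connected if and only if all the edges of $H$ lie in a single connected component of $H$. One direction projects an $L(H)$-path onto a connected subgraph of $H$; the other lifts a path in $H$ joining endpoints of two prescribed edges to a path in $L(H)$. Feeding $H=G-e$ into this, I obtain the reformulation: \emph{$w$ is a cutpoint of $L(G)$ if and only if the edges of $G-e$ are spread over at least two connected components of $G-e$.}

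It then remains to decide when this happens, and here I would distinguish whether $e$ is a bridge. If $e$ is not a bridge, then $G-e$ is connected, so all of its edges lie in one component and $w$ is not a cutpoint; this is consistent with the assertion, which demands that $e$ be a bridge. If $e$ is a bridge, then by definition $G-e$ has exactly two connected components $G_u\ni u$ and $G_v\ni v$, so no edge of $G-e$ can join the two, and the edges of $G-e$ occupy both components exactly when $G_u$ has an edge and $G_v$ has an edge. Since $G_u$ is connected and contains $u$, it has an edge if and only if it has a second vertex, i.e.\ if and only if $u$ has in $G$ a neighbor other than $v$, i.e.\ $\deg_G u\geq 2$; symmetrically for $v$. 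Therefore $w$ is a cutpoint of $L(G)$ if and only if $e$ is a bridge of $G$ and $\deg_G u\geq 2$ and $\deg_G v\geq 2$, which is exactly the proposition.

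The computations involved are light; the only delicate points are the small-case bookkeeping — the degenerate situations in which $L(G-e)$ has few or no vertices (such as $G=\mathcal{K}_2$) and the isolated vertices that deleting $e$ may create — and invoking the correct notion of bridge so as to be certain that $G-e$ splits into exactly two components with $u$ and $v$ separated. I expect that handling these degenerate cases carefully is the only real obstacle; once the identification $L(G)-w=L(G-e)$ is in hand, the result follows almost immediately.
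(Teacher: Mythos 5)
Your argument is correct, and it is worth noting that the paper itself supplies no proof for this statement: it is quoted verbatim from Chartrand's paper \cite[Proposition 1]{Ch}, so there is no in-text argument to compare against. Your reduction is the natural self-contained one: the identity $L(G)-w=L(G-e)$ holds exactly as you say (both graphs have vertex set $E(G)\setminus\{e\}$ and adjacency is ``sharing an endpoint,'' which is unaffected by deleting $e$), and combined with the standard fact that $L(H)$ is connected precisely when all edges of $H$ lie in one component of $H$, the question becomes whether the edges of $G-e$ occupy at least two components. Your case split on whether $e$ is a bridge then gives both implications at once: if $e$ is not a bridge, $G-e$ is connected and $w$ cannot be a cutpoint; if $e$ is a bridge, the two components $G_u$ and $G_v$ each contain an edge exactly when $\deg_G u\geq 2$ and $\deg_G v\geq 2$, i.e.\ when neither endpoint has degree one. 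The only point requiring the care you already flag is the convention that a graph with at most one vertex (e.g.\ $L(G-e)$ when $G=\mathcal{K}_2$, or when a pendant endpoint makes one side edgeless) counts as connected, so that $w$ is correctly not a cutpoint in those degenerate situations; with that settled, your proof is complete and matches the classical argument of Chartrand in spirit.
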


\begin{Proposition}\cite[Proposition 2]{Ch}
	A necessary and sufficient condition that an edge $e = \{e_1,e_2\}$ be a bridge of the line graph $L(G)$ of a connected graph $G$ is that the edges $e_1$ and $e_2$ in $G$ be bridges in $G$ which meet in a vertex of degree two.
\end{Proposition}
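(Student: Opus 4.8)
The plan is to prove both implications directly, exploiting the dictionary between paths in $G$ and paths in $L(G)$. Write $e_1=\{u,v\}$ and $e_2=\{v,w\}$, where $v$ is the common vertex of the two edges --- it exists because $\{e_1,e_2\}\in E(L(G))$ --- and note that $u\neq w$ since $G$ is simple. Throughout I use the elementary fact that an edge of a graph is a bridge exactly when its two endpoints lie in different connected components after the edge is deleted, applied to $\{e_1,e_2\}$ inside $L(G)-\{e_1,e_2\}$.

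For the ``only if'' direction, assume $\{e_1,e_2\}$ is a bridge of $L(G)$. First I would show $\deg_G v=2$: if $v$ had a third incident edge $e_3$, then $\{e_1,e_2,e_3\}$ would span a triangle in $L(G)$, and no edge lying on a triangle is a bridge; since $v$ is incident to both $e_1$ and $e_2$ this forces $\deg_G v=2$, so $e_1,e_2$ are the only edges at $v$. Next I show $e_1$ is a bridge of $G$ (the argument for $e_2$ is symmetric), by contraposition. If $e_1$ is not a bridge, then $G-e_1$ is connected, and in $G-e_1$ the vertex $v$ has degree $1$, so \emph{any} $u$--$w$ path in $G-e_1$ avoids $v$ and hence also avoids $e_2$. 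Such a path exists by connectedness; reading its edges as vertices of $L(G)$, then prepending $e_1$ (adjacent through $u$) and appending $e_2$ (adjacent through $w$), produces a walk from $e_1$ to $e_2$ in $L(G)$ none of whose edges equals $\{e_1,e_2\}$, contradicting that $\{e_1,e_2\}$ is a bridge.

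For the ``if'' direction, assume $\deg_G v=2$ and that $e_1,e_2$ are bridges of $G$. In $G-\{e_1,e_2\}$ the vertex $v$ is isolated; let $A$ and $C$ be the components containing $u$ and $w$. I would first argue $A\neq C$: a $u$--$w$ path in $G-\{e_1,e_2\}$ followed by the edge $e_2$ would give a $u$--$v$ path in $G-e_1$, contradicting that $e_1$ is a bridge. Then I claim that in $L(G)-\{e_1,e_2\}$ the connected component of $e_1$ is contained in $\{e_1\}$ together with the edges of $G$ lying entirely inside $A$: the only neighbour of $e_1$ through $v$ was $e_2$, which has been deleted, and from an edge inside $A$ one can never reach $e_2$ or an edge touching $C$ without passing through $v$ or $w$. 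Symmetrically the component of $e_2$ lies in $\{e_2\}$ together with the edges inside $C$; since $A\cap C=\emptyset$ and $e_1\neq e_2$, these two components are distinct, so $\{e_1,e_2\}$ is a bridge of $L(G)$.

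The degenerate configurations (for instance $\deg_G u=1$, where $e_1$ is a pendant edge and becomes an isolated vertex of $L(G)-\{e_1,e_2\}$, or $u$ adjacent to $w$) are routine to check and are consistent with the statement. The crux is the ``if'' direction, specifically the claim that the component of $e_1$ in $L(G)-\{e_1,e_2\}$ cannot leak across to $e_2$ or into $C$: this is exactly where both hypotheses are used --- $\deg_G v=2$ severs the $v$-side, and $e_1$ being a bridge is what guarantees $A\neq C$ --- and verifying it carefully, including that no edge of $G$ with both endpoints in $A$ is incident to $v$ or $w$, is the main point to get right.
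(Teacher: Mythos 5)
Your proof is correct, and there is nothing in the paper to compare it against: the statement is quoted from Chartrand \cite{Ch} without proof, so your argument serves as a self-contained substitute. Both directions check out. In the ``only if'' part, the triangle observation correctly forces $\deg_G v=2$; then $v$ has degree $1$ in $G-e_1$, so every $u$--$w$ path there avoids $v$ and hence $e_2$, and the lifted walk in $L(G)$ uses $e_1$ and $e_2$ only at its two ends, with neighbours different from $e_2$ and $e_1$ respectively, so it never traverses the edge $\{e_1,e_2\}$ --- exactly what is needed to contradict bridgehood. In the ``if'' part, the containment of the component of $e_1$ in $\{e_1\}$ together with the edges having both endpoints in $A$ closes by induction just as you indicate: $v\notin A$ (it is isolated in $G-\{e_1,e_2\}$) and $w\notin A$ (since $A\neq C$, which is where the bridge hypothesis on $e_1$ enters), so no edge inside $A$ is adjacent to $e_2$ in $L(G)$, while every other neighbour of such an edge again lies inside $A$; hence $e_2$ is outside the component of $e_1$ and $\{e_1,e_2\}$ is a bridge of $L(G)$. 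One cosmetic remark: the step you label ``contraposition'' is really a contradiction argument carried out under the assumption that $\{e_1,e_2\}$ is a bridge, since it invokes $\deg_G v=2$, which you derived from that assumption; the logic is fine, but the phrasing should reflect this.
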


\subsection{Edge ideals}

Given a finite simple graph $G$ with the vertex set $V(G)=\{1,\ldots,n\}=[n]$ and the set of edges $E(G)$, one may consider its \textit{edge ideal} which is the squarefree monomial ideal denoted by $I(G)\subseteq S=\kk[x_1,\ldots,x_n]$, where $\kk$ is a field, defined by $I(G)=\langle x_ix_j\ :\ \{i,j\}\in E(G)\rangle $. 

Edge ideals have been intensively studied and properties of invariants such that Betti numbers, projective dimension, Castelnuovo--Mumford regularity, or Krull dimension have been established for several classes of graphs (see \cite{HeHi,MV,Vi} for more details). We recall that, if $I\subseteq S=\kk[x_1,\ldots,x_n]$ is an ideal and $\mathcal{F}$ is the minimal graded free resolution of $S/I$ as an $S$-module:
\[\mathcal{F}: 0\rightarrow\bigoplus\limits_jS(-j)^{\beta_{pj}}\rightarrow\cdots\rightarrow\bigoplus\limits_j S(-j)^{\beta_{1j}}\rightarrow S\rightarrow S/I\rightarrow0,\] then the numbers $\beta_{ij}$ are \textit{the graded Betti numbers of $S/I$}, \textit{the projective dimension of $S/I$} is \[\projdim \,S/I=\max\{i:\beta_{ij}\neq 0\}\] and \textit{the Castelnuovo--Mumford regularity} is \[\reg\, S/I=\max\{j-i:\beta_{ij}\neq0\}.\]
Let $d>0$ be an integer. An ideal $I$ of $S$ \textit{has a $d$--linear resolution} if the minimal graded free resolution of $I$ is of the form
\[\ldots\longrightarrow S(-d-2)^{\beta_2}\longrightarrow S(-d-1)^{\beta_2}\longrightarrow S(-d)^{\beta_1}\longrightarrow S\longrightarrow S/I\longrightarrow 0.
\] Equivalently, an ideal $I$ has a $d$-linear resolution if and only if it is minimally generated in degree $d$ and  $\reg\, S/I=d-1$. If $d=2$, we simply say that the ideal has a linear resolution.

Fr\"oberg's Theorem gives a combinatorial characterization of the property of an edge ideal to have a linear resolution:
\begin{Theorem}[Fr\"oberg]\cite{F} \label{Froberg} Let $G$ be a finite simple graph. The edge ideal $I(G)$ has a linear resolution if and only if $G$ is a co-chordal graph.
\end{Theorem}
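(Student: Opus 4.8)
\emph{Setup and reduction.} The plan is to route everything through Hochster's formula and a topological property of clique complexes. Let $\Delta=\Delta(\overline G)$ be the clique complex of $\overline G$, i.e.\ the simplicial complex on $V(G)=[n]$ whose faces are the cliques of $\overline G$; equivalently $\Delta$ is the independence complex of $G$, and $I(G)$ is exactly the Stanley--Reisner ideal $I_\Delta$. Since $I(G)$ is generated in degree $2$, it has a linear resolution precisely when $\beta_{i,j}(S/I(G))=0$ for all $i\ge1$ with $j\neq i+1$. By Hochster's formula,
\[
\beta_{i,j}\bigl(S/I_\Delta\bigr)=\sum_{\substack{W\subseteq[n]\\ |W|=j}}\dim_{\kk}\widetilde H_{\,j-i-1}\bigl(\Delta_W;\kk\bigr),
\]
with $\Delta_W$ the induced subcomplex on $W$; as the degree-$0$ reduced homology always lands in the strand $j=i+1$, this yields the reformulation: \emph{$I(G)$ has a linear resolution iff $\widetilde H_k(\Delta_W;\kk)=0$ for every $W\subseteq[n]$ and every $k\ge1$}. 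Finally note that an induced subcomplex of a clique complex is itself a clique complex, $\Delta(\overline G)_W=\Delta(\overline{G}[W])$, so the condition reads: the clique complex of every induced subgraph of $\overline G$ has vanishing reduced homology in positive degrees.

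\emph{The ``only if'' direction.} If $\overline G$ is not chordal, choose an induced cycle $C_m$ in $\overline G$ with $m\ge4$. Such a $C_m$ has no triangles, so its clique complex is $C_m$ itself as a $1$-complex, homotopy equivalent to a circle, whence $\widetilde H_1(\Delta(C_m);\kk)\neq0$. Plugging $W=V(C_m)$ into Hochster's formula gives $\beta_{m-2,\,m}(S/I(G))\neq0$ with $m\neq(m-2)+1$, so $I(G)$ has no linear resolution. (The case $m=4$ recovers the induced-gap obstruction mentioned above: a graph whose edge ideal has a linear resolution is gap-free.)

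\emph{The ``if'' direction.} Assume $\overline G$ is chordal. Induced subgraphs of chordal graphs are chordal, so it suffices to show: for any chordal graph $H$, $\widetilde H_k(\Delta(H);\kk)=0$ for all $k\ge1$. Induct on $|V(H)|$. By Dirac's theorem a nonempty chordal graph has a simplicial vertex $v$, i.e.\ one with $\mathcal{N}_H(v)$ a clique. Write $\Delta(H)=\Delta(H\setminus v)\cup\st_{\Delta(H)}(v)$: the closed star $\st_{\Delta(H)}(v)$ is a cone with apex $v$, hence contractible, and the intersection $\Delta(H\setminus v)\cap\st_{\Delta(H)}(v)=\lk_{\Delta(H)}(v)=\Delta(\mathcal{N}_H(v))$ is the full simplex on the clique $\mathcal{N}_H(v)$, hence contractible---unless $v$ is isolated in $H$, where $\Delta(H)$ is just $\Delta(H\setminus v)$ together with an extra disjoint point. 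In both cases Mayer--Vietoris gives $\widetilde H_k(\Delta(H);\kk)\cong\widetilde H_k(\Delta(H\setminus v);\kk)$ for all $k\ge1$; since $H\setminus v$ is chordal, the induction closes, the base case being a point or the empty complex. Combining the two directions with the reduction yields Fr\"oberg's equivalence (equivalently, $\reg S/I(G)=1$ iff $\overline G$ is chordal).

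\emph{Main obstacle.} The ``only if'' part is a one-line homological obstruction once the triangle-free induced cycle is isolated. The real content is the ``if'' direction: it rests on the structural fact that chordal graphs always admit simplicial vertices (a perfect elimination ordering), and one must carry the Mayer--Vietoris induction cleanly through the degenerate cases (isolated vertices, disconnected $H$), keeping track only of positive-degree homology, since that alone can break linearity.
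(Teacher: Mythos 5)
Your proposal is correct, but note that the paper does not prove this statement at all: it is quoted as Fr\"oberg's theorem directly from \cite{F} and then used as a black box (e.g.\ in Remark~\ref{gapfree} and Theorem~\ref{linres}), so there is no internal proof to compare against. What you give is the standard modern argument: identify $I(G)$ with the Stanley--Reisner ideal of the independence complex $\Delta(\overline G)$, use Hochster's formula to translate linearity of the resolution into vanishing of $\widetilde H_k(\Delta(\overline G[W]);\kk)$ for all $W$ and all $k\ge 1$, obtain the ``only if'' direction from the circle $\Delta(C_m)\simeq S^1$ hiding inside a non-chordal complement, and the ``if'' direction by a Mayer--Vietoris induction along a simplicial vertex guaranteed by Dirac's theorem. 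This is sound: the decomposition $\Delta(H)=\Delta(H\setminus v)\cup\st_{\Delta(H)}(v)$ with intersection $\lk_{\Delta(H)}(v)$ a full simplex is exactly right, and you correctly isolate the only degenerate situations (isolated $v$, where the intersection is empty and one argues via the disjoint-union splitting of homology in degrees $k\ge1$, and the one-vertex base case). Two cosmetic points you may wish to make explicit: in the Hochster reformulation the strands $j\le i$ cannot contribute either, because $\widetilde H_{-1}(\Delta_W)\ne 0$ would force $\Delta_W$ to have no vertices, which is impossible as every singleton is independent in $G$; and the field $\kk$ plays no role in the ``only if'' direction since $\widetilde H_1$ of an induced $m$-cycle, $m\ge4$, is free of rank one, so the characterization is characteristic-independent, matching Fr\"oberg's original statement.
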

The following remark is a direct consequence of the theorem of Fr\"oberg and it will be intensively used through the paper:
\begin{Remark}\label{gapfree}\rm
	If $G$ is a graph such that its edge ideal has a linear resolution, then $G$ is gap-free.
\end{Remark}

In the sequel we recall connections between Krull dimension, regularity and projective dimension of the edge ideal and combinatorial invariants of the graph. We start with the Krull dimension. 

A subset $W$ of $V(G)$ is called \textit{an independent set of $G$} if it does not contain any edge of $G$, i.e. for all $u,v\in W$ such that $u\neq v$, one has $\{u,v\}\notin E(G)$. 
One may compute the Krull dimension of $S/I(G)$ by using independent sets (see \cite[Section 2]{MV} or \cite[Lemma 1]{HM} for more details):
\begin{Proposition}\cite{HM}\label{dimindep}
	$\dim S/I(G)=\max\{|W|:\ W\mbox{ is an independent set of }G\}.$
\end{Proposition}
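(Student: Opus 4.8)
The plan is to pass through the primary decomposition of the squarefree monomial ideal $I(G)$. Since $I(G)$ is generated by squarefree monomials it is a radical ideal, hence $S/I(G)$ is reduced and, as for any ideal, $\dim S/I(G)=\max\{\dim S/P\ :\ P\in\Min(I(G))\}$. So the first step is to identify the minimal primes of $I(G)$: I claim they are exactly the monomial prime ideals $P_C=\langle x_i\ :\ i\in C\rangle$ with $C$ a \emph{minimal vertex cover} of $G$, that is, an inclusion-minimal subset $C\subseteq V(G)$ meeting every edge of $G$.

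To see this, recall that the monomial primes of $S$ are precisely the ideals $P_C$ for $C\subseteq[n]$, and that every associated (in particular every minimal) prime of a monomial ideal is again a monomial prime. Now $P_C\supseteq I(G)$ if and only if $x_ix_j\in P_C$ for every edge $\{i,j\}\in E(G)$, and since $P_C$ is prime and monomial this means $i\in C$ or $j\in C$; thus $P_C\supseteq I(G)$ exactly when $C$ is a vertex cover. As $P_C\subseteq P_{C'}\iff C\subseteq C'$, the minimal elements of $\{P_C : C\text{ a vertex cover}\}$ are the $P_C$ with $C$ a minimal vertex cover. Since $S/P_C$ is a polynomial ring in the $n-|C|$ variables indexed by $[n]\setminus C$, we get $\dim S/P_C=n-|C|$, and therefore $\dim S/I(G)=\max\{n-|C|\ :\ C\text{ a vertex cover of }G\}$.

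Finally, complementation $W\mapsto V(G)\setminus W$ gives a bijection between the independent sets of $G$ and the vertex covers of $G$: a set contains no edge precisely when its complement meets every edge. Hence $\max\{n-|C| : C\text{ a vertex cover}\}=\max\{|V(G)\setminus C| : C\text{ a vertex cover}\}=\max\{|W| : W\text{ an independent set of }G\}$, which is the assertion. The argument is essentially routine; the only structural input needed is that squarefree monomial ideals have squarefree monomial (primary) decompositions, which is classical, so I do not anticipate a real obstacle. One can equivalently package the whole argument via Stanley--Reisner theory: $I(G)$ is the Stanley--Reisner ideal of the simplicial complex whose faces are the independent sets of $G$, and the Krull dimension of a Stanley--Reisner ring equals the maximal cardinality of a face of the complex.
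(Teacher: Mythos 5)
Your proof is correct, and there is nothing in the paper to compare it against: Proposition \ref{dimindep} is quoted from \cite{HM} (see also \cite{MV}) without proof. Your route --- minimal primes of the squarefree ideal $I(G)$ are exactly the primes $P_C$ generated by minimal vertex covers $C$, $\dim S/P_C=n-|C|$, and complementation identifies vertex covers with complements of independent sets --- is precisely the standard Stanley--Reisner argument that the cited sources use, and every step (including the edge case $E(G)=\emptyset$, where the empty cover gives the zero ideal) checks out.
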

A subset $M=\{e_1,\ldots,e_s\}$ of $E(G)$ is called \textit{a matching of $G$} if any two edges from $M$ are disjoint, i.e. if for all $i\neq j$, one has $e_i\cap e_j=\emptyset$. \textit{An induced matching} in $G$ is a matching which forms an induced subgraph of $G$.\textit{ The induced matching number} of $G$, denoted by $\indmat(G)$, is defined to be the number of edges in a largest induced matching, that is 
$$\indmat(G)=\max\{|M|:\ M\mbox{ is an induced matching in }G\}.$$

	\textit{A clique-neighborhood} is the set $K$ of edges of a clique together with some edges each of which is incident to a member of $K$.
For chordal graphs, in between the number of edges of the largest induced matching and the smallest number of sets of clique-neighborhoods there is the following connection:
\begin{Theorem}\cite[Theorem 2]{C}\label{ind-CN}
	For a chordal graph $G$, \[\max\{|M|: M\mbox{ is an induced matching in }G\}=\]\[=\min\{|N|: N\mbox{ is a set of clique-neighborhoods in }G\mbox{ which covers }E(G)\}.\]
\end{Theorem}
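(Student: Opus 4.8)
The claimed identity is a packing--covering duality, so the plan is to prove the easy inequality $\indmat(G)\le c(G)$ — writing $c(G)$ for the minimum on the right-hand side — by a one-line argument, and the substantive inequality $c(G)\le\indmat(G)$ by transporting the whole problem onto a clique tree of $G$. For the easy direction, recall that a clique-neighborhood is the edge set $K$ of some clique $C$ together with some edges incident to members of $K$, so every edge it contains has at least one endpoint in $V(C)$. If two edges $e,e'$ of an induced matching both lay in one clique-neighborhood, pick endpoints $a\in e\cap V(C)$ and $a'\in e'\cap V(C)$: since $e\cap e'=\emptyset$ we get $a\neq a'$, and since $a,a'\in V(C)$ they are adjacent in $G$, an edge joining an endpoint of $e$ to one of $e'$ — impossible for an induced matching. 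Hence each clique-neighborhood of a cover meets a fixed induced matching in at most one edge, giving $\indmat(G)\le c(G)$. (Chordality-free, this is best possible: for $C_4$ one has $\indmat=1$ but $c=2$.)

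For the reverse inequality I would first normalise. Enlarging the underlying clique of a clique-neighborhood to a maximal one and throwing in all incident edges only enlarges it, so in a minimum cover each member may be taken to be ``all edges of $G$ with an endpoint in $V(C)$'' for some maximal clique $C$; and such clique-neighborhoods, coming from maximal cliques $C_1,\dots,C_t$, cover $E(G)$ exactly when $V(C_1)\cup\dots\cup V(C_t)$ is a vertex cover of $G$. Thus $c(G)$ is the least number of maximal cliques of $G$ whose vertex-union is a vertex cover. Now fix a clique tree $\mathcal{T}$ of $G$ — available because $G$ is chordal, working componentwise if $G$ is disconnected — i.e.\ a tree whose nodes are the maximal cliques of $G$ such that, for every vertex $x$, the set $\mathcal{T}_x$ of maximal cliques through $x$ induces a subtree. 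For an edge $e=\{x,y\}$ put $S_e=\mathcal{T}_x\cup\mathcal{T}_y$; as $x$ and $y$ lie together in some maximal clique, $\mathcal{T}_x$ meets $\mathcal{T}_y$, so $S_e$ is again a subtree of $\mathcal{T}$. Since $x\in V(C_1)\cup\dots\cup V(C_t)$ iff the node set $\{C_1,\dots,C_t\}$ meets $\mathcal{T}_x$, a family of maximal cliques has vertex-union a vertex cover iff, as a set of nodes of $\mathcal{T}$, it meets every $S_e$, $e\in E(G)$. Hence $c(G)$ is the minimum number of nodes of $\mathcal{T}$ meeting every member of the family $\{S_e:e\in E(G)\}$ of subtrees of $\mathcal{T}$.

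Two facts then close the loop. (i) For any finite family of subtrees of a tree, the minimum number of nodes meeting every subtree in the family equals the maximum number of pairwise node-disjoint subtrees in the family — the classical ``König property for subtrees of a tree.'' I would prove it directly: root $\mathcal{T}$; repeatedly pick a still-uncovered subtree whose highest node (the one closest to the root) is as far from the root as possible, put that node into the transversal, and delete every subtree through it; the subtrees picked in successive rounds turn out to be pairwise node-disjoint, so the greedy transversal and this disjoint family have equal size and (with the trivial opposite inequality) both are optimal. (ii) A subfamily $\{S_{e_1},\dots,S_{e_k}\}$ is pairwise node-disjoint if and only if $\{e_1,\dots,e_k\}$ is an induced matching of $G$: two vertices of $G$ lie in a common maximal clique iff they are equal or adjacent, so $S_{e_i}\cap S_{e_j}\neq\emptyset$ says precisely that an endpoint of $e_i$ coincides with or is adjacent to an endpoint of $e_j$, i.e.\ $e_i,e_j$ are not disjoint or are joined by an edge. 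Combining the normalisation and the clique-tree translation with (i) and (ii): $c(G)$ equals the maximum number of pairwise node-disjoint subtrees $S_e$, which is $\indmat(G)$; in particular this also re-proves the easy inequality.

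The only place where genuine work is needed is fact (i), the covering/disjointness duality for subtrees of a tree; it is standard and could be quoted, but is worth spelling out since everything else is formal bookkeeping. The two points that must be handled carefully are the ``maximal clique with all incident edges'' normalisation of a cover and the precise clique-tree translation — and it is exactly this translation, via the existence of a clique tree, that uses chordality, without which (as the $C_4$ example shows) the reverse inequality fails.
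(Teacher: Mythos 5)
The paper does not actually prove Theorem~\ref{ind-CN}: it is quoted verbatim from Cameron \cite[Theorem 2]{C}, so there is no internal argument to compare yours against. Your blind proof is correct and self-contained. The easy inequality $\indmat(G)\le c(G)$ is argued properly (every edge of a clique-neighbourhood meets $V(C)$, and two distinct vertices of $V(C)$ are adjacent, which an induced matching forbids). The normalisation step is sound: any nonempty clique-neighbourhood is contained in the set of all edges meeting $V(C)$ for a maximal clique $C$ with $|V(C)|\ge 2$, and that set is again a clique-neighbourhood, so $c(G)$ is the least number of maximal cliques whose vertex union is a vertex cover. The clique-tree translation is exactly where chordality enters, and the key combinatorial fact (i) is correctly provable by your greedy scheme: since the shallowest node of a subtree is an ancestor of all its nodes, if a later-chosen subtree met an earlier-chosen one it would contain the earlier transversal node and would already have been deleted, so the chosen subtrees are pairwise disjoint and the usual sandwich gives the min--max equality. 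Fact (ii) is also right, since two vertices lie in a common maximal clique iff they are equal or adjacent; note only the harmless subtleties that distinct edges may define the same subtree (such a pair is automatically non-disjoint, matching the failure of the induced-matching condition) and that a disconnected $G$ is handled componentwise, both sides being additive over components. Your route differs from Cameron's original one, which proves that the square of the line graph of a chordal graph is chordal and then invokes perfection to match independent sets of $L(G)^2$ (induced matchings) with clique covers (clique-neighbourhoods); your clique-tree plus greedy-duality argument avoids perfect-graph machinery at the cost of proving the transversal/packing duality for subtrees of a tree directly, which you do correctly.
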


Another combinatorial invariant of the graph $G$ that will be used is\textit{the co-chordal cover number of $G$}, denoted by $\cochord(G)$, which is the minimum number of co-chordal subgraphs required to cover the edges of $G$ that is \[\cochord(G)=\min\{s\in{\NN}: \mbox{ there are }G_1,\ldots,G_s\mbox{ co-chordal subgraphs of }G\]\[\mbox{ such that } E(G)=E(G_1)\cup\cdots\cup E(G_s)\}\]

In between the induced matching number, the co-chordal cover number, and the Castelnuovo--Mumford regularity there are the following connections:
\begin{Proposition}\cite[Theorem 1]{W}\label{indmatgen}
	Let $G$ be a finite simple graph. Then, over any field $\kk$, $\reg\, S/I(G)\leq\cochord(G)$.
\end{Proposition}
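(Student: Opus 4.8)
The plan is to argue by induction on $c=\cochord(G)$, combining Fr\"oberg's Theorem~\ref{Froberg}, applied to the pieces of an optimal co-chordal cover of $E(G)$, with a subadditivity property of the Castelnuovo--Mumford regularity under sums of squarefree monomial ideals. For the base of the induction, if $c\le 1$ then either $G$ has no edges, so that $I(G)=0$ and $\reg S/I(G)=0$, or $G$ is co-chordal (isolated vertices of $G$ being irrelevant to $\reg S/I(G)$), so that $I(G)$ has a linear resolution by Fr\"oberg's Theorem and $\reg S/I(G)=1$; in either case $\reg S/I(G)\le c$.

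For the inductive step, let $c\ge 2$ and write $E(G)=E(G_1)\cup\cdots\cup E(G_c)$ with each $G_i$ co-chordal. Set $H=G_1\cup\cdots\cup G_{c-1}$, viewed as a subgraph of $G$ on the vertex set $V(G)$, so that $\cochord(H)\le c-1$; since an edge ideal is generated by the monomials corresponding to the edges of the graph, $I(G)=I(H)+I(G_c)$. By the induction hypothesis $\reg S/I(H)\le c-1$, and $\reg S/I(G_c)\le 1$ by Fr\"oberg's Theorem (or trivially, if $G_c$ has no edges). Applying the subadditivity bound then gives
\[\reg S/I(G)=\reg S/\bigl(I(H)+I(G_c)\bigr)\le \reg S/I(H)+\reg S/I(G_c)\le(c-1)+1=c,\]
which completes the induction.

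The one nonformal ingredient, and the step where I expect the real work to lie, is the subadditivity inequality $\reg S/(I+J)\le\reg S/I+\reg S/J$ for squarefree monomial ideals $I,J$. I would derive it from the theorem of Kalai and Meshulam on Leray numbers of intersections of simplicial complexes. For a simplicial complex $\Delta$ on a vertex set $V$ write $I_\Delta$ for its Stanley--Reisner ideal; by Hochster's formula, $\reg S/I_\Delta$ equals the least integer $d$ such that $\tilde{H}_i(\Delta_W;\kk)=0$ for all $i\ge d$ and all $W\subseteq V$, i.e. the $\kk$-Leray number of $\Delta$. If $I=I_{\Delta_1}$ and $J=I_{\Delta_2}$, then $I+J=I_{\Delta_1\cap\Delta_2}$, and since an induced subcomplex of a $d$-Leray complex is again $d$-Leray while $(\Delta_1\cap\Delta_2)_W=(\Delta_1)_W\cap(\Delta_2)_W$, the inequality reduces to showing that the intersection of a $d_1$-Leray and a $d_2$-Leray complex on the same vertex set has vanishing reduced $\kk$-homology in every degree $\ge d_1+d_2$; this one proves by a Mayer--Vietoris argument, carrying an auxiliary induction on the number of vertices, relating the reduced homology of $\Delta_1$, $\Delta_2$, $\Delta_1\cap\Delta_2$, and $\Delta_1\cup\Delta_2$. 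Reconstructing this homological estimate carefully is the technical heart of the proof; the remainder is bookkeeping with Fr\"oberg's Theorem, and the argument works over an arbitrary field $\kk$.
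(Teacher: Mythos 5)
The paper gives no proof of this proposition; it is quoted verbatim as Theorem 1 of Woodroofe \cite{W}, and your argument is, in substance, exactly the proof in that cited source: cover $E(G)$ by $\cochord(G)$ co-chordal subgraphs, apply Fr\"oberg's Theorem \ref{Froberg} to see each piece contributes regularity at most $1$, and absorb the pieces one at a time via the subadditivity $\reg S/(I+J)\le \reg S/I+\reg S/J$ for squarefree monomial ideals, which is the theorem of Kalai and Meshulam. Your bookkeeping (induction on $c$, the base cases, the identity $I(G)=I(H)+I(G_c)$, the irrelevance of isolated vertices) is correct, so the proposal stands as the standard proof with the hard homological input outsourced, exactly as Woodroofe does. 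One caveat on the part you only sketch: a bare Mayer--Vietoris argument does not close by itself, since to kill $\tilde{H}_i\bigl((\Delta_1\cap\Delta_2)_W;\kk\bigr)$ for $i\ge d_1+d_2$ you need the vanishing of $\tilde{H}_{i+1}\bigl((\Delta_1\cup\Delta_2)_W;\kk\bigr)$, i.e.\ the companion Leray bound for the union $\Delta_1\cup\Delta_2$ (equivalently, a regularity bound for the intersection $I\cap J$ of the ideals); in the Kalai--Meshulam argument the induction on the vertex set must therefore carry the intersection and union statements simultaneously. Since you explicitly flag this lemma as the cited technical heart rather than claim to have reproved it, the overall proposal is sound and coincides with the proof behind the paper's citation.
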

A lower bound for the Castelnuovo--Mumford regularity of $I(G)$ is given by the induced matching number.
\begin{Proposition}\cite[Lemma 2.2]{Ka}\label{regindmat}
	For any graph $G$, we have $\reg\, S/I(G)\geq\indmat(G)$.
\end{Proposition}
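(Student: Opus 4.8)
The plan is to produce, for any graph $G$, a single nonvanishing graded Betti number of $S/I(G)$ whose bidegree already forces the bound. First I would fix a largest induced matching $M=\{e_1,\dots,e_s\}$ of $G$, so that $s=\indmat(G)$, write $e_i=\{a_i,b_i\}$, and set $W=e_1\cup\cdots\cup e_s$, a vertex set of cardinality $2s$. Since $M$ is an \emph{induced} matching, the induced subgraph $G[W]$ is precisely the disjoint union of the $s$ edges $e_1,\dots,e_s$. The strategy then has two steps: (a) the graded Betti numbers of the edge ideal of an induced subgraph of $G$ are bounded above by those of $I(G)$; and (b) the edge ideal of a disjoint union of $s$ edges has regularity $s$. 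Together these give $\reg S/I(G)\ge s=\indmat(G)$. (If $\indmat(G)=0$ the statement is trivial, as then $I(G)=0$.)

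For step (a) I would use Hochster's formula. The edge ideal $I(G)$ is the Stanley--Reisner ideal of the independence complex $\Ind(G)$, whose faces are the independent sets of $G$, and for any subset $W'$ of the vertex set one has $\Ind(G)|_{W'}=\Ind(G[W'])$. Hochster's formula reads $\beta_{i,j}(S/I_{\Delta})=\sum_{|W'|=j}\dim_{\kk}\widetilde H_{j-i-1}(\Delta|_{W'};\kk)$, the sum being over all vertex subsets $W'$ of cardinality $j$. Comparing this formula for $\Delta=\Ind(G[W])$ (with ground ring $R=\kk[x_w:w\in W]$) and for $\Delta=\Ind(G)$, and using that for $W'\subseteq W$ we have $\Ind(G[W])|_{W'}=\Ind(G[W'])=\Ind(G)|_{W'}$, one sees that the sum computing $\beta_{i,j}(R/I(G[W]))$ is a subsum (with nonnegative terms) of the one computing $\beta_{i,j}(S/I(G))$. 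Hence $\beta_{i,j}(R/I(G[W]))\le\beta_{i,j}(S/I(G))$ for all $i,j$ and over any field.

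For step (b), $I(G[W])=(a_1b_1,\dots,a_sb_s)$ is a complete intersection of $s$ quadrics in $R$, so $R/I(G[W])$ is resolved by the Koszul complex on those quadrics; its last nonzero free module is $R(-2s)$ in homological degree $s$, so $\beta_{s,2s}(R/I(G[W]))=1$ and $\reg R/I(G[W])=2s-s=s$. (Equivalently, $\Ind(G[W])$ is the $s$-fold join of the two-point complex, hence homotopy equivalent to an $(s-1)$-sphere, whose only nonvanishing reduced homology sits in degree $s-1$, and Hochster's formula again gives $\beta_{s,2s}(R/I(G[W]))\ne 0$.) Combining with step (a), $\beta_{s,2s}(S/I(G))\ne 0$, whence $\reg S/I(G)\ge 2s-s=s=\indmat(G)$.

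The proof is short once Hochster's formula is at hand; the only delicate point is the monotonicity in step (a), i.e.\ the fact that restricting the independence complex to a subset of vertices only discards reduced homology and never creates it. If one prefers to avoid the simplicial machinery, an alternative is to peel off the vertices of $V(G)\setminus W$ one at a time using the short exact sequences $0\to (S/(I(G):x))(-1)\xrightarrow{\ x\ }S/I(G)\to S/(I(G)+(x))\to 0$ together with the standard inequalities among the Castelnuovo--Mumford regularities of the three modules, and then invoke the complete-intersection computation; there the main obstacle is the nested induction needed to control $\reg S/(I(G):x)$, since $I(G):x$ is (up to extra variables) the edge ideal of a smaller induced subgraph of $G$.
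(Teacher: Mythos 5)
The paper does not prove this statement; it is quoted from Katzman [Ka, Lemma 2.2], so there is no internal argument to compare against. Your proof is correct and is essentially the standard one (and Katzman's): the restriction identity $\Ind(G)|_{W'}=\Ind(G[W'])$ together with Hochster's formula gives $\beta_{i,j}(R/I(G[W]))\le\beta_{i,j}(S/I(G))$, and since an induced matching makes $G[W]$ a disjoint union of $s$ edges, the Koszul complex gives $\beta_{s,2s}\neq 0$, forcing $\reg S/I(G)\ge s=\indmat(G)$.
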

\begin{Proposition}\cite[Corollary 6.9]{HaTu}\label{reg} If $G$ is a chordal graph, then $\reg\, S/I(G)=\indmat(G)$.
\end{Proposition}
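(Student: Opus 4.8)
The inequality $\reg\, S/I(G)\ge\indmat(G)$ is Proposition~\ref{regindmat} and holds for every finite simple graph, so the whole content of the statement is the reverse inequality $\reg\, S/I(G)\le\indmat(G)$ under the chordality hypothesis. The plan is to obtain it by composing two facts already in hand: the bound $\reg\, S/I(G)\le\cochord(G)$ of Proposition~\ref{indmatgen}, which needs no hypothesis, and the combinatorial identity of Theorem~\ref{ind-CN}, which for a chordal graph identifies $\indmat(G)$ with the least number of clique-neighborhoods needed to cover $E(G)$. Thus it will be enough to prove
\[
\cochord(G)\ \le\ \min\{\,|N| : N\text{ is a set of clique-neighborhoods of }G\text{ covering }E(G)\,\},
\]
and then to invoke Theorem~\ref{ind-CN} to rewrite the right-hand side as $\indmat(G)$; this last step is the only one that uses chordality of $G$.

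The key point, and the only step that is more than bookkeeping, is the claim that \emph{every clique-neighborhood $H$ of $G$ is a co-chordal subgraph of $G$}. Granting it, any family $N$ of clique-neighborhoods that covers $E(G)$ is in particular a family of co-chordal subgraphs of $G$ covering $E(G)$, so $\cochord(G)\le|N|$; minimizing over such $N$ yields the displayed inequality, after which
\[
\indmat(G)\ \le\ \reg\, S/I(G)\ \le\ \cochord(G)\ \le\ \indmat(G)
\]
collapses to the desired equality.

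To prove the claim I would pass to the complement. By definition $H$ is the union of a clique on a vertex set $C$ with some further edges, each meeting $C$; writing $X=V(H)\setminus C$, each such extra edge joins a vertex of $C$ to a vertex of $X$, so $H$ is complete on $C$ and has no edge inside $X$. Consequently, in $\overline{H}$ the set $C$ is independent while $X$ is a clique. Now suppose $\overline{H}$ carries an induced cycle $\Gamma$ of length at least $4$ and set $a=|V(\Gamma)\cap C|$. If $a=0$, then $\Gamma$ is an induced cycle of length $\ge 4$ contained in the clique $X$, which cannot happen; if $a=1$, then $\Gamma$ meets $X$ in at least three vertices, and the subgraph of $\overline{H}$ induced on any three of them is a triangle, contradicting that $\Gamma$ is an induced cycle; and if $a\ge 2$, pick $c\in V(\Gamma)\cap C$ and note that its two neighbours along $\Gamma$ must lie in $X$, since $C$ is independent in $\overline{H}$ — but then those two vertices are adjacent in $\overline{H}$ (both belong to the clique $X$) while being at distance $2$ on $\Gamma$, hence they form a chord of $\Gamma$, which is impossible. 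Therefore $\overline{H}$ is chordal, i.e.\ $H$ is co-chordal, as claimed.

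The hard part is exactly this lemma on clique-neighborhoods; everything else is the chain of inequalities recalled in Section~1. I would also stress in the write-up that chordality enters only through Theorem~\ref{ind-CN}: the estimates $\reg\, S/I(G)\le\cochord(G)$ and $\cochord(G)\le$ (clique-neighborhood cover number) hold for an arbitrary finite simple graph, and it is the identification of the latter quantity with $\indmat(G)$ that requires $G$ to be chordal.
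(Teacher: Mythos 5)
Your argument is correct, but it is not ``the paper's proof'' in any meaningful sense: the paper does not prove Proposition~\ref{reg} at all, it simply quotes it from H\`a--Van Tuyl \cite{HaTu}, whose original argument is a quite different one (an induction on the graph via splitting edges and Betti-number techniques). What you give instead is a self-contained derivation from the other results quoted in Section~1, and it works: the chain $\indmat(G)\le\reg S/I(G)\le\cochord(G)\le\min\{|N|\}=\indmat(G)$ is sound, the only new ingredient being your lemma that a clique-neighborhood $H$ is co-chordal. That lemma is right, and the cleanest way to see it is exactly your observation: $V(H)$ splits as a clique $C$ and a set $X$ with no $H$-edges inside $X$, so $H$ (and hence $\overline{H}$) is a split graph, and split graphs are chordal --- your three-case chord-hunting argument is a correct hands-on proof of this; note also that the conclusion is insensitive to whether one complements within $V(H)$ or $V(G)$, since the extra vertices become universal in the complement and universal vertices cannot lie on an induced cycle of length at least $4$. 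This route is essentially Woodroofe's own remark that his bound $\reg S/I(G)\le\cochord(G)$ together with Cameron's Theorem~\ref{ind-CN} recovers the chordal case of H\`a--Van Tuyl; compared with the original proof it buys brevity and reuses machinery already present in the paper, at the cost of resting on the two nontrivial quoted inputs (Cameron's combinatorial identity and Woodroofe's regularity bound, the latter itself relying on Fr\"oberg's Theorem~\ref{Froberg} and subadditivity of regularity over edge covers) rather than being elementary from scratch.
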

A different upper bound for the Castelnuovo--Mumford regularity of the edge ideal of a graph $G$ can be given in terms of maximal induced cliques of $G$:
\begin{Proposition}\cite[Theorem 2]{W}\label{regcliques}
	If $G$ is a graph such that $V(G)$ can be partitioned into
	an (induced) independent set $J_0$ together with $s$ cliques $J_1,\ldots, J_s$, then
	$\reg \,S/I(G) \leq s$.
\end{Proposition}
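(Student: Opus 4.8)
The plan is to deduce the bound from the inequality $\reg\, S/I(G)\le\cochord(G)$ of Proposition~\ref{indmatgen} by exhibiting an explicit covering of $E(G)$ by $s$ co-chordal subgraphs. Fix a partition $V(G)=J_0\cup J_1\cup\cdots\cup J_s$ with $J_0$ an independent set and each $J_i$ a clique for $i\ge 1$. For $i=1,\dots,s$, let $G_i$ be the subgraph of $G$ with vertex set $V(G)$ whose edges are exactly the edges of $G$ having at least one endpoint in $J_i$. First I would check that $E(G)=E(G_1)\cup\cdots\cup E(G_s)$: since $J_0$ is independent, every edge of $G$ has at least one endpoint outside $J_0$, hence in some $J_i$ with $i\ge 1$, and therefore lies in $G_i$.

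The main step is to show that each $G_i$ is co-chordal, i.e.\ that $\overline{G_i}$ is chordal. By construction $G_i$ consists of the complete graph on $J_i$ together with some of the edges joining $J_i$ to $V(G)\setminus J_i$, and no edge inside $V(G)\setminus J_i$; hence in $\overline{G_i}$ the set $J_i$ spans an independent set, the set $V(G)\setminus J_i$ spans a complete graph, and all remaining edges of $\overline{G_i}$ join $J_i$ to $V(G)\setminus J_i$. Suppose $\overline{G_i}$ contained an induced cycle $C$ of length at least $4$. Since $V(G)\setminus J_i$ is a clique in $\overline{G_i}$, $C$ meets $V(G)\setminus J_i$ in at most two vertices; if it met it in at most one, then $C$ would contain two consecutive vertices of the independent set $J_i$, which is absurd; so $C$ meets $V(G)\setminus J_i$ in exactly two vertices $a,b$, which are adjacent in $\overline{G_i}$. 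The remaining vertices of $C$ lie in the independent set $J_i$, so between two consecutive appearances of vertices of $\{a,b\}$ along $C$ there is at most one vertex of $J_i$; together with $|C|\ge 4$ this forces $C=a\,x\,b\,y$ with $x,y\in J_i$. But then the edge $ab$ is a chord of $C$, contradicting that $C$ is induced. Hence $\overline{G_i}$ has no induced cycle of length $\ge 4$ and is chordal.

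It follows that $\cochord(G)\le s$, and Proposition~\ref{indmatgen} gives $\reg\, S/I(G)\le\cochord(G)\le s$, as claimed. I expect the only real obstacle to be the chordality case analysis for $\overline{G_i}$; everything else is bookkeeping. An alternative route would be a direct induction on $s$ using the short exact sequences relating $S/I(G)$ to $S/(I(G):x)$ and $S/(I(G)+(x))$ for a suitably chosen vertex $x$, but the covering argument above is shorter and self-contained given Proposition~\ref{indmatgen}.
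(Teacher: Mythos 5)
Your proof is correct. Note that the paper does not actually prove this statement: it is quoted verbatim from Woodroofe \cite{W} as Theorem 2 there, so there is no internal proof to compare against. Your derivation — covering $E(G)$ by the $s$ subgraphs $G_i$ consisting of all edges meeting the clique $J_i$, observing that each $\overline{G_i}$ is a split graph (independent set $J_i$ versus clique $V(G)\setminus J_i$) and hence chordal, and then invoking $\reg S/I(G)\le\cochord(G)$ from Proposition~\ref{regindmat}'s companion bound (Proposition~\ref{indmatgen}) — is essentially Woodroofe's own argument, which likewise deduces Theorem 2 from Theorem 1 by noting that the set of edges incident to a clique forms a co-chordal subgraph. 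Your case analysis for chordality of $\overline{G_i}$ is complete: an induced cycle of length at least $4$ can meet a clique in at most two vertices, cannot have two consecutive vertices in the independent part, and the resulting forced $4$-cycle $a\,x\,b\,y$ acquires the chord $ab$. The only cosmetic remark is that one could equally cite the standard fact that split graphs are chordal rather than reproving it, but the self-contained version is fine.
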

The following result describes the behavior of the Castelnuovo--Mumford re\-gularity with respect to induced subgraphs.
\begin{Proposition}\cite[Proposition 3.8]{MV}\label{regind}
	If $H$ is an induced subgraph of $G$, then $\reg\, S/I(H)\leq\reg\, S/I(G)$.
\end{Proposition}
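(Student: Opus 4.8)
The plan is to pass from edge ideals to the Stanley--Reisner picture and apply Hochster's formula. Associate to $G$ its independence complex $\Delta(G)$, the simplicial complex on vertex set $V(G)$ whose faces are exactly the independent sets of $G$; then $I(G)$ is the Stanley--Reisner ideal of $\Delta(G)$, so that $S/I(G)$ is the Stanley--Reisner ring $\kk[\Delta(G)]$, and likewise $I(H)$ is the Stanley--Reisner ideal of $\Delta(H)$. (Strictly speaking $I(H)$ lives in $\kk[x_i:i\in V(H)]$, but since adjoining variables not occurring in an ideal only tensors its minimal free resolution with a polynomial ring, the regularity of $S/I(H)$ is the same whether computed over $\kk[x_i:i\in V(H)]$ or over $S$, so the inequality in the statement is unambiguous; below I regard $I(H)$ inside $S$.) The crucial observation is that $\Delta(H)$ is the restriction $\Delta(G)|_{V(H)}$, i.e.\ the full subcomplex of $\Delta(G)$ on the vertex set $V(H)$: a subset $\sigma\subseteq V(H)$ is independent in $H$ if and only if it is independent in $G$, and this is exactly where the hypothesis that $H$ is an \emph{induced} subgraph enters.

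Next I would invoke Hochster's formula: for any simplicial complex $\Delta$ and any subset $\sigma$ of its vertex set, the fine-graded Betti number is $\beta_{i,\sigma}(\kk[\Delta])=\dim_{\kk}\widetilde{H}_{|\sigma|-i-1}\bigl(\Delta|_{\sigma};\kk\bigr)$, so it depends only on the induced subcomplex $\Delta|_{\sigma}$. Applying this to $\Delta(G)$ and to $\Delta(H)=\Delta(G)|_{V(H)}$: for every $\sigma\subseteq V(H)$ one has $\Delta(H)|_{\sigma}=\Delta(G)|_{\sigma}$, hence $\beta_{i,\sigma}(S/I(H))=\beta_{i,\sigma}(S/I(G))$, whereas $\beta_{i,\sigma}(S/I(H))=0$ for $\sigma\not\subseteq V(H)$. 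Summing over the subsets $\sigma$ of a fixed cardinality $j$ gives $\beta_{i,j}(S/I(H))\le\beta_{i,j}(S/I(G))$ for all $i,j$; in particular every nonzero graded Betti number of $S/I(H)$ lies in a bidegree $(i,j)$ for which $S/I(G)$ also has a nonzero Betti number. Since $\reg$ is the maximum of $j-i$ over such bidegrees, we conclude $\reg S/I(H)\le\reg S/I(G)$.

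There is little to surmount here beyond the one subtle point, the identification $\Delta(H)=\Delta(G)|_{V(H)}$, which genuinely requires $H$ to be induced: for an arbitrary subgraph the complex $\Delta(H)$ can be strictly larger than $\Delta(G)|_{V(H)}$ (fewer edges means more independent sets), the Betti-table comparison breaks, and the conclusion itself fails --- for instance $2\mathcal{K}_2$ is a (non-induced) subgraph of the path on four vertices, whose edge ideal has a linear resolution, while $\reg S/I(2\mathcal{K}_2)=2$. An alternative route avoiding Hochster's formula reduces, by iterating over the vertices of $V(G)\setminus V(H)$, to the single-vertex deletion $H=G\setminus\{v\}$, writes $I(G)=I(G\setminus\{v\})+\bigl(x_v x_u:u\in\mathcal{N}(v)\bigr)$, notes that the second summand has a linear resolution, and uses the splitting machinery for monomial ideals of Francisco--H\`a--Van Tuyl to see that this is a Betti splitting, so that $\beta_{i,j}(I(G))\ge\beta_{i,j}(I(G\setminus\{v\}))$ and hence $\reg S/I(G)\ge\reg S/I(G\setminus\{v\})$; I would nevertheless present the Hochster-formula argument as the main one, as it is the most transparent.
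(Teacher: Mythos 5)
Your argument is correct. The paper does not prove this statement at all --- it is quoted from Morey--Villarreal \cite{MV} --- and the route you take (realizing $S/I(G)$ as the Stanley--Reisner ring of the independence complex, observing that for an \emph{induced} subgraph $H$ one has $\Delta(H)=\Delta(G)|_{V(H)}$, and then reading off from Hochster's formula that every multigraded Betti number of $S/I(H)$ is also a Betti number of $S/I(G)$ in the same bidegree) is exactly the standard proof behind the cited result; your handling of the harmless discrepancy between the ambient rings $\kk[x_i:i\in V(H)]$ and $S$, and your remark (with the $2\mathcal{K}_2\subseteq P_4$ example) that inducedness is genuinely needed, are both accurate. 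The alternative Betti-splitting sketch is a reasonable second route but is not needed; the Hochster-formula argument you present as the main one is complete.
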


\section{The line graph of a tree}
Throughout this section we consider general properties of algebraic and homological invariants of edge ideals of the line graph of trees. For a tree $T$, we study the behavior of the Castelnuovo--Mumford regularity when we delete a vertex from $T$, and we will pay attention to the property of the edge ideal $I(L(T))$ of having a linear resolution. Since we are dealing with both the tree $T$ and its line graph $L(T)$, throughout this paper we will assume that $I(T)\subseteq S=\kk[x_1,\ldots,x_n]$ and $I(L(T))\subseteq R=\kk[e_{uv}:\ \{u,v\}\in E(G)]$.

For trees, there is the following characterization of their line graph. 
\begin{Lemma}\cite{Ch} A graph is the line graph of a tree if and only if it is a connected block graph in which each cutpoint is on exactly two blocks.
\end{Lemma}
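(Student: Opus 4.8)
The plan is to prove both implications concretely: given a tree $T$, I exhibit the block decomposition of $L(T)$ explicitly in terms of the vertices of $T$, and conversely I reconstruct a tree from a given block graph. I recall that a \emph{block} of a graph is a maximal connected subgraph with no cutpoint of its own (a maximal $2$-connected subgraph, a single edge, or a single vertex), that a \emph{block graph} is one all of whose blocks are cliques, and I use freely the standard facts that the blocks partition the edges of a graph, that two distinct blocks share at most one vertex, that every vertex lies on at least one block, and that a vertex is a cutpoint precisely when it lies on at least two blocks. The case $T=K_2$, where $L(T)=K_1$, is trivial, so assume $T$ has at least two edges; for a vertex $v\in V(T)$ with $\deg_T v\ge 2$ write $Q_v$ for the set of edges of $T$ incident to $v$, which induces a clique on $\deg_T v$ vertices of $L(T)$. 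The first step is to record three facts, each a consequence of $T$ being a simple triangle-free graph: (i) the cliques $Q_v$ cover every vertex and every edge of $L(T)$; (ii) for $u\ne v$ one has $Q_u\cap Q_v=\{uv\}$ when $uv\in E(T)$ and $Q_u\cap Q_v=\emptyset$ otherwise, so distinct $Q$'s meet in at most one vertex and no vertex of $L(T)$ lies on more than two of them; (iii) each $Q_v$ is a \emph{maximal} clique of $L(T)$, since an edge of $T$ adjacent to all of $Q_v$ but not incident to $v$ would close a triangle in $T$ with two members of $Q_v$.

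The heart of the argument is the claim that \emph{every cycle of $L(T)$ lies inside one $Q_v$}; this is the only place acyclicity of $T$ is used. Given a cycle $(f_1,\dots,f_k,f_1)$ in $L(T)$, pick $a_i\in f_i\cap f_{i+1}$ in $T$ for each $i$; after deleting consecutive repetitions, $a_1,\dots,a_k,a_1$ is a closed walk in $T$, and if it is nontrivial it must backtrack (as $T$ is acyclic), which forces $f_i=f_{i+1}$ for some $i$, impossible in a cycle, while if it is trivial all $a_i$ coincide in a vertex lying on every $f_i$. From this I deduce that the blocks of $L(T)$ are precisely the $Q_v$: each $Q_v$ is $2$-connected (or a $K_2$), hence lies in a block $B$; if $B\supsetneq Q_v$, pick $f\in B\setminus Q_v$, and since $B$ then has at least three vertices and is $2$-connected, a standard fact gives a cycle of $B$ through $f$ and through an edge of $Q_v$; by the claim that cycle lies in some $Q_u$, and as it meets $Q_v$ in two vertices we get $u=v$, so $f\in Q_v$ --- a contradiction. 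Conversely, any block $B$ with at least two vertices contains an edge $\{e,f\}$ of $L(T)$, whose endpoints meet at a vertex $w$ of $T$, so $\{e,f\}\subseteq Q_w$; since $Q_w$ is itself a block and the block containing an edge is unique, $B=Q_w$. Thus $L(T)$ is connected, all its blocks are the complete graphs $Q_v$, and by (ii) every cutpoint, lying on at least two blocks, lies on exactly two.

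For the converse, let $G$ be a connected block graph with every cutpoint on exactly two blocks; let $m$ be the number of blocks and $c$ the number of cutpoints. Since in the block--cut tree of $G$ each cutpoint accounts for exactly two edges, that tree has $2c$ edges and $m+c$ vertices, hence $m=c+1$; also every non-cutpoint lies on a unique block. I then build a graph $T$ whose vertices are the blocks of $G$ together with one fresh leaf $\ell_x$ per non-cutpoint $x$, and whose edges are indexed by $V(G)$: a cutpoint $x$ on blocks $B,B'$ contributes $\{B,B'\}$, and a non-cutpoint $x$ on block $B$ contributes $\{B,\ell_x\}$. One checks $T$ is simple (two vertices of $G$ giving the same edge would force two blocks to share two vertices), that $T$ is connected (lift paths from the block--cut tree), and that $T$ has $|V(G)|$ edges and $m+(|V(G)|-c)=|V(G)|+1$ vertices, so $T$ is a tree. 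Finally $L(T)\cong G$ via the bijection $V(G)\to E(T)$, because vertices $x,y$ of $G$ are adjacent iff they lie on a common block (each block being a clique) iff the corresponding edges of $T$ share a vertex.

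The main obstacle is the second paragraph, namely identifying the blocks of $L(T)$ with the cliques $Q_v$, and within it the cycle-containment claim, which is where the tree hypothesis genuinely enters: for an arbitrary graph the $Q$-cliques still cover $E(L(G))$ with every vertex in at most two of them (this is essentially Krausz's criterion, [Proposition~\ref{kn}]), but they need not be the blocks. The converse direction is essentially bookkeeping with the block--cut tree, the only delicate point being the count $m=c+1$, which breaks down as soon as some cutpoint lies on more than two blocks.
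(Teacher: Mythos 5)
Your proof is correct, and it is worth noting that the paper itself gives no argument for this Lemma: it is quoted from Chartrand \cite{Ch} and used as a black box, so there is no internal proof to compare against. Your route is the natural self-contained one. In the forward direction you identify the blocks of $L(T)$ explicitly as the cliques $Q_v$ of edges through a vertex $v$ of degree at least two; the only genuinely tree-theoretic input is your claim that every cycle of $L(T)$ lies inside a single $Q_v$, proved by the closed-walk/backtracking argument (the one cosmetic slip is that, after suppressing repeated vertices, the backtrack need not force two \emph{consecutive} $f_i$'s to coincide, only two distinct ones --- but that still contradicts the vertices of a cycle being pairwise distinct, so the step is easily repaired). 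In the converse direction the reconstruction of $T$ from the block--cut tree, with the count $m=c+1$ coming precisely from every cutpoint having degree two there, is correct bookkeeping, and your construction also handles the degenerate cases $G=K_1$ (giving $T=K_2$) and cutpoint-free $G$ (giving a star). Compared with the classical treatment behind the citation, which leans on Krausz's criterion (Proposition \ref{kn} in the paper: a graph is a line graph iff its edges are covered by complete subgraphs with no vertex on more than two), your argument re-derives the relevant special case of that criterion from scratch for trees and then pins down the blocks; what this buys is independence from the cited literature and an explicit description of the blocks of $L(T)$ in terms of $T$ (which is in fact the picture used implicitly elsewhere in the paper, e.g.\ in the clique-neighborhood and bouquet arguments), at the cost of length relative to simply invoking Krausz plus the standard block facts.
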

Recall that \textit{a block graph} is a connected graph in which every block (maximal biconnected induced subgraph) is a clique.
\begin{Remark}\rm
If $T$ is a tree, then $L(T)$ is a chordal graph.
\end{Remark} 
Note that the above remark and Fr\"oberg's Theorem \ref{Froberg}
allow us to determine the induced matching of $\overline{L(T)}$, where $T$ is a tree:\begin{Proposition}
	Let $T$ be a tree. Then $\indmat(\overline{L(T)})=1$.
\end{Proposition}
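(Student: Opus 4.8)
The plan is to use Fröberg's Theorem together with the observation just recorded that $L(T)$ is always chordal. Since $L(T)$ is chordal, Proposition~\ref{reg} (Corollary 6.9 of Hà--Van Tuyl) gives $\reg\, R/I(L(T)) = \indmat(L(T))$; but that is about $L(T)$, not its complement, so that route is not quite direct. Instead I would argue through co-chordality: because $L(T)$ is chordal, it is in particular the complement of a co-chordal graph, i.e.\ $\overline{\overline{L(T)}} = L(T)$ is chordal, so $\overline{L(T)}$ is co-chordal. By Fröberg's Theorem~\ref{Froberg} applied to the graph $\overline{L(T)}$, the edge ideal $I(\overline{L(T)})$ has a linear resolution. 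An ideal with a linear resolution is generated in degree $2$ and has $\reg\, S'/I(\overline{L(T)}) = 1$, where $S'$ is the relevant polynomial ring.

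Next I would invoke Proposition~\ref{regindmat} and the fact that $\reg = 1$ forces $\indmat(\overline{L(T)}) \le 1$. Indeed, Proposition~\ref{regindmat} states $\reg\, S'/I(\overline{L(T)}) \ge \indmat(\overline{L(T)})$, so $\indmat(\overline{L(T)}) \le 1$. For the reverse inequality, one only needs $\overline{L(T)}$ to have at least one edge: a single edge is trivially an induced matching of size $1$. Since $T$ has at least two edges as soon as it is not a single edge (and the line graph of a one-edge tree is a single vertex, a degenerate case one may exclude or treat separately), $L(T)$ is not a complete graph in general, so $\overline{L(T)}$ has an edge and $\indmat(\overline{L(T)}) \ge 1$. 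Combining the two bounds yields $\indmat(\overline{L(T)}) = 1$.

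The only real subtlety is the degenerate situation: if $T$ has exactly one edge then $L(T)$ is a single vertex with no edges, $\overline{L(T)}$ is also edgeless, and $\indmat(\overline{L(T)}) = 0$. So implicitly the statement assumes $T$ has at least two edges (equivalently, $L(T)$ has at least one edge), which is the natural hypothesis; I would either state this or note that $\overline{L(T)}$ must contain an edge because $L(T)$ is never complete for a tree with more than one edge (two edges sharing no vertex in $T$ give two non-adjacent vertices in $L(T)$). The main ``obstacle'' is therefore purely bookkeeping about which graph Fröberg's theorem is being applied to; the argument itself is a short chain: $L(T)$ chordal $\Rightarrow$ $\overline{L(T)}$ co-chordal $\Rightarrow$ $I(\overline{L(T)})$ has linear resolution $\Rightarrow$ $\reg = 1$ $\Rightarrow$ $\indmat(\overline{L(T)}) \le 1$, and $\ge 1$ is immediate.
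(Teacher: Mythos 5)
Your proof is correct and follows essentially the same route as the paper: $L(T)$ chordal $\Rightarrow$ $\overline{L(T)}$ co-chordal $\Rightarrow$ $I(\overline{L(T)})$ has a linear resolution by Fr\"oberg's Theorem \ref{Froberg} $\Rightarrow$ $\indmat(\overline{L(T)})\leq 1$ by Proposition \ref{regindmat}. Your additional attention to the lower bound and to the degenerate case of a one-edge tree is a reasonable refinement that the paper leaves implicit.
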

\begin{proof} Since $T$ is a tree, its line graph is chordal, therefore $\overline{L(T)}$ is co-chordal. Thus, by Fr\"oberg's Theorem \ref{Froberg}, $I(\overline{L(T)})$ has a linear resolution. The statement follows by Proposition \ref{regindmat}.
	\end{proof}
We will consider next the behavior of the regularity of the line graph when we delete a pendant edge.
\begin{Proposition}
	Let $T$ be a tree and $u$ a free vertex of $T$ such that $\{u,v\}$ is an edge, $\deg_T v\geq4$ and each vertex from $\mathcal{N}_T(v)$ has degree either one or at least three. Let $T'=T\setminus\{u\}$.  
	\begin{itemize}
		\item[a)] If $v$ has at least $3$ free vertices then $\indmat L(T)=\indmat L(T')$.
		\item[b)]  If $v$ has $2$ free vertices, then $\indmat L(T)=\indmat L(T')+1$.
		\item[c)] If $v$ has only one free vertex, then $\indmat L(T)=\indmat L(T')$.
		\end{itemize}
\end{Proposition}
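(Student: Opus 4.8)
The statement compares the induced matching number of $L(T)$ with that of $L(T')$ where $T'$ is obtained by removing one free vertex $u$ adjacent to a high-degree vertex $v$. Since $T$ and $T'$ are both trees, their line graphs are chordal, so by Proposition~\ref{reg} we have $\indmat L(T) = \reg\, S/I(L(T))$ and likewise for $T'$; thus it is equivalent (and more convenient) to work directly with induced matchings in the block graphs $L(T)$ and $L(T')$. The key structural observation is that $L(T')$ is an induced subgraph of $L(T)$: its vertices are all edges of $T$ except the pendant edge $\{u,v\}$, and adjacency is inherited. Hence by Proposition~\ref{regind} (or directly) $\indmat L(T') \le \indmat L(T)$, so in all three cases the real content is the reverse inequality, up to the additive constant.

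\textbf{Local picture at $v$.} First I would describe the neighborhood of the vertex $e_{uv}$ in $L(T)$. In $L(T)$, the edges of $T$ incident to $v$ form a clique (a block), call it $K_v$, of size $\deg_T v \ge 4$; the vertex $e_{uv}$ lies only in the block $K_v$ (it is a simplicial vertex of $L(T)$, being a pendant edge of $T$). By the hypothesis, every neighbor $w$ of $v$ in $T$ has degree $1$ or $\ge 3$; the edges $e_{vw}$ with $\deg_T w = 1$ are the \emph{other free edges at $v$}, and these are also simplicial vertices of $L(T)$ lying only in $K_v$. Removing $u$ deletes exactly the vertex $e_{uv}$ from $L(T)$, shrinking $K_v$ to a clique $K_v'$ of size $\deg_T v - 1 \ge 3$, and leaves the rest of the graph untouched.

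\textbf{The three cases.} The heart of the argument is a careful analysis of how an induced matching can use vertices of $K_v$. Since $K_v$ is a clique, any induced matching $M$ of $L(T)$ can contain \emph{at most one} edge both of whose endpoints lie in $K_v$, and it can contain at most one further edge with exactly one endpoint in $K_v$ — but these two cannot coexist in an induced matching (the second edge's endpoint in $K_v$ would be adjacent to both endpoints of the first). So $M$ restricted to the "star at $v$'' contributes at most one edge to $M$.

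\emph{Case (a): $v$ has $\ge 3$ free vertices.} Then $K_v'$ still contains at least two free edges of $T$ (simplicial vertices of $L(T')$ confined to $K_v'$). Given a maximum induced matching $M$ of $L(T)$, if $M$ uses the vertex $e_{uv}$ then $M$ uses it in a single edge $\{e_{uv}, e_{vw}\}$ inside $K_v$ (since $e_{uv}$'s only neighbors are in $K_v$); replace that edge by $\{e_{vw_1}, e_{vw_2}\}$ for two free edges $e_{vw_1},e_{vw_2}$ at $v$ not otherwise used — these are isolated from the rest of $L(T')$ — to get an induced matching of $L(T')$ of the same size. Hence $\indmat L(T') \ge \indmat L(T)$, giving equality.

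\emph{Case (b): $v$ has exactly $2$ free vertices, say $e_{uv}$ and $e_{vw_0}$.} In $T$, the pair $\{e_{uv}, e_{vw_0}\}$ is an induced matching component isolated from everything else (both are simplicial in $K_v$, and no third simplicial vertex of $K_v$ is free). The idea is: take a maximum induced matching $M'$ of $L(T')$; I claim $M'$ can be chosen to not use $e_{vw_0}$ in a way that blocks adding an edge at $v$ — more precisely, $M' \cup \{\text{the edge } \{e_{uv},e_{vw_0}\}\}$, after possibly modifying $M'$ near $K_v'$, is an induced matching of $L(T)$ of size $|M'|+1$. Conversely, a maximum induced matching of $L(T)$ uses at most one edge meeting $K_v$; deleting that edge and deleting the vertex $e_{uv}$ leaves an induced matching of $L(T')$, so $\indmat L(T) \le \indmat L(T') + 1$. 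Combining, $\indmat L(T) = \indmat L(T') + 1$.

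\emph{Case (c): $v$ has only one free vertex, namely $e_{uv}$.} Now $K_v'$ has no simplicial free vertex at all — every vertex $e_{vw}$ of $K_v'$ has $\deg_T w \ge 3$, so it lies in a second block and has neighbors outside $K_v'$. The point is that any induced matching edge that $M$ could place \emph{entirely inside} $K_v$ using $e_{uv}$ (i.e. $\{e_{uv}, e_{vw}\}$) can be re-routed: replace it by an edge of $L(T')$ lying in the \emph{other} block through $e_{vw}$, which is just as isolated. One must check such a replacement edge exists and does not collide with the rest of $M$; this uses $\deg_T v \ge 4$ so that $e_{vw}$ can be chosen with its second block disjoint from the other matching vertices. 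This yields $\indmat L(T') \ge \indmat L(T)$, hence equality.

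\textbf{Main obstacle.} The routine direction ($\indmat L(T') \le \indmat L(T)$ in (a),(c), and $\le \indmat L(T')+1$ in (b)) is immediate from $L(T') \hookrightarrow L(T)$ being induced. The delicate direction is the "re-routing'' argument: showing that whenever a maximum induced matching of $L(T)$ is forced to use the vertex $e_{uv}$, one can trade its matching edge for one available in $L(T')$ without creating a chord to the rest of the matching. This is where the hypotheses $\deg_T v \ge 4$ and "each neighbor of $v$ has degree $1$ or $\ge 3$'' are essential — they guarantee enough spare simplicial vertices (case (a),(b)) or enough room in the secondary blocks (case (c)). I expect the case (c) re-routing to be the technically fussiest, since there no convenient free edge is available at $v$ and one must descend into an adjacent block.
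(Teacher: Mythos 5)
Your reduction to induced matchings in the block graphs $L(T)$ and $L(T')$ is the right setting, and your case (a) is sound and complete: $e_{uv}$ and every other pendant edge at $v$ are simplicial vertices of $L(T)$ whose closed neighbourhood is exactly the clique $K_v$, so a matching edge $\{e_{uv},e_{vw}\}$ can always be traded for $\{e_{vw_1},e_{vw_2}\}$, where $w_1,w_2$ are two free neighbours of $v$ other than $u$, without creating any adjacency. The genuine gap is exactly where you flag it: the ``re-routing'' in cases (b) and (c) is asserted but not proved, and it cannot be proved. The obstruction is that the remaining matching edges may already occupy \emph{every} secondary block $K_w$ (for $w$ a non-free neighbour of $v$) and every block one step further out, so no replacement edge meeting $K_v'$ or $K_w$ is available; your appeal to $\deg_T v\ge 4$ only provides spare vertices inside $K_v$, not spare blocks outside it. (The paper's own argument, via the clique-neighbourhood covers of Theorem \ref{ind-CN}, makes the same purely local count and has the same defect.)

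In fact parts (b) and (c) are false as stated, so the gap is not fixable. For (c): let $v$ be adjacent to a leaf $u$ and to $w_1,w_2,w_3$ of degree $3$; give $w_2$ and $w_3$ two leaf neighbours each, and let $w_1$ be adjacent to $a_1,a_2$, each of which has two leaf neighbours. All hypotheses hold and $u$ is the unique free neighbour of $v$. The edge $\{e_{uv},e_{vw_1}\}$ together with the four pairs of pendant edges at $w_2,w_3,a_1,a_2$ is an induced matching of $L(T)$ of size $5$ (the closed neighbourhood of $e_{vw_1}$ is $K_v\cup K_{w_1}$, none of whose other vertices is matched), while a check over the six blocks of $L(T')$ shows $\indmat L(T')=4$; so $\indmat L(T)\neq \indmat L(T')$. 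For (b): let $v$ be adjacent to leaves $u,w_0$ and to $w_1,w_2$ of degree $3$, where $w_2$ has leaf neighbours $b_1,b_2$ and $w_1$ is adjacent to $a_1,a_2$ having leaf neighbours $c_1,c_2$ and $d_1,d_2$ respectively. Then $\{e_{vw_0},e_{vw_1}\}$, $\{e_{w_2b_1},e_{w_2b_2}\}$, $\{e_{a_1c_1},e_{a_1c_2}\}$, $\{e_{a_2d_1},e_{a_2d_2}\}$ is an induced matching of $L(T')$ of size $4$, and $L(T)$ admits no induced matching of size $5$, so $\indmat L(T)=\indmat L(T')=4$. This last example is precisely the configuration your modification step would have to handle: the optimal matching of $L(T')$ pairs $e_{vw_0}$ with $e_{vw_1}$, whose secondary block $K_{w_1}$ is useless anyway, and there is no way to free $e_{vw_0}$ for $e_{uv}$ without losing an edge elsewhere.
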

\begin{proof} Assume that $\indmat L(T)=s$. By Theorem \ref{ind-CN}, it follows that all the edges of $L(T)$ can be covered by at least $s$ clique-neighborhoods. Let $\deg_T(v)=d\geq 4$.
	
	a) Since $v$ has at least three free vertices, the clique-neighborhood induced by $v$ and its neighbors in $L(T)$ must be in the considered minimal set of clique-neighborhoods since all the edges of $L(T)$ should be covered. The vertex $v$ and its neighbors will give in $L(T)$ a maximal clique of size $d\geq4$ which is connected to at most $d-3$ cliques. In $L(T')$, the same vertex and its neighbors will give a clique of degree $d-1$ which is connected to at most $d-3$ cliques. Hence, the number of connected cliques is not sufficient to cover all the edges of the clique which is given by $v$ and its neighbors, so the number of required clique-neighborhoods does not decreases. Therefore $\indmat L(T')=s$.
	
	b) Since $v$ has at least two free vertices, the clique-neighborhood induced by $v$ and its neighbors in $L(T)$ must be in the considered minimal set of clique-neighborhoods since all the edges of $L(T)$ should be covered. The vertex $v$ and its neighbors will give in $L(T)$ a maximal clique of size $d\geq4$ which is connected to  $d-2$ cliques. In $L(T')$, the same vertex will give a clique of degree $d-1$ which is connected to $d-2$ cliques. Hence, the number of connected cliques is sufficient to cover all the edges of the clique which is given by $v$ and its neighbors, so the number of required clique-neighborhoods decreases by one. Therefore $\indmat L(T')=s-1$.
	
	c) The vertex $v$ will give in $L(T)$ a maximal clique of size $d\geq4$ which is connected to $d-1$ cliques which is not in the set of clique-neighborhoods (due to our assumption on the degrees of the vertices from $\mathcal{N}(v)$). In $L(T')$, the same vertex will give a clique of degree $d-1$ which is connected to $d-1$ cliques. Therefore, the number of required clique-neighborhoods is not changed. Hence $\indmat L(T')=s$.
	\end{proof}
Since $L(T)$ is a chordal graph, the next corollary follows by Proposition \ref{reg}:
\begin{Corollary}
	Let $T$ be a tree and $u$ a free vertex of $T$ such that $\{u,v\}$ is an edge, $\deg_T v\geq4$ and each vertex from $\mathcal{N}_T(v)$ has degree either one or at least three. Let $T'=T\setminus\{u\}$.  
	\begin{itemize}
		\item[a)] If $v$ has at least $3$ free vertices then $\reg\, R/I(L(T))=\reg\, R/I(L(T'))$.
		\item[b)]  If $v$ has $2$ free vertices, then $\reg\, R/I(L(T))=\reg\, R/I(L(T'))+1$.
		\item[c)] If $v$ has only one free vertex, then $\reg\, R/I(L(T))=\reg\, R/I(L(T'))$.
	\end{itemize}
\end{Corollary}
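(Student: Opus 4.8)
The plan is to deduce the corollary directly from the preceding Proposition by passing through the chordality of line graphs of trees. First I would observe that, since $u$ is a free vertex of $T$, deleting $u$ together with its unique incident edge $\{u,v\}$ from the tree $T$ yields again a tree $T'$. Hence both $L(T)$ and $L(T')$ are line graphs of trees, and therefore both are chordal graphs. Applying Proposition~\ref{reg} to each of them we obtain
\[
\reg R/I(L(T)) = \indmat L(T) \quad\text{and}\quad \reg R/I(L(T')) = \indmat L(T').
\]

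Next I would invoke the preceding Proposition, whose hypotheses on $u$, $v$, and $\mathcal{N}_T(v)$ are exactly the ones assumed here. It gives: in case a) (the vertex $v$ has at least three free vertices) $\indmat L(T) = \indmat L(T')$; in case b) ($v$ has exactly two free vertices) $\indmat L(T) = \indmat L(T')+1$; and in case c) ($v$ has exactly one free vertex) $\indmat L(T) = \indmat L(T')$. Substituting these three identities into the displayed equalities immediately yields statements a), b), and c), respectively.

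I do not expect any genuine obstacle here: the corollary is a direct translation of the induced-matching statement of the previous Proposition via the equality $\reg S/I(G)=\indmat(G)$ for chordal $G$. The only point that deserves an explicit line of justification is that $T'$ is still a tree, so that $L(T')$ is chordal and Proposition~\ref{reg} applies to it as well; this is immediate, since removing a vertex of degree one from a tree leaves a connected acyclic graph.
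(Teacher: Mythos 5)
Your proposal is correct and follows exactly the route the paper takes: the corollary is stated as an immediate consequence of the preceding Proposition on $\indmat$ together with Proposition~\ref{reg} applied to the chordal graphs $L(T)$ and $L(T')$. Your extra remark that $T'$ is again a tree (so $L(T')$ is chordal) is a worthwhile explicit justification that the paper leaves implicit.
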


\begin{Proposition}
	Let $T$ be a tree which is not a star and $u$ a free vertex of $T$ such that $\{u,v\}$ is an edge and $\deg_T v=3$. Let $T'=T\setminus\{u\}$. Then $$\indmat L(T)=\indmat L(T')+1.$$
\end{Proposition}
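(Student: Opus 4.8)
I would argue, as in the preceding propositions, through the clique-neighborhood description of the induced matching number: $L(T)$ and $L(T')$ are chordal, so by Theorem~\ref{ind-CN} it suffices to show that the minimum number of clique-neighborhoods covering $E(L(T))$ is exactly one more than the minimum number covering $E(L(T'))$. Write $\mathcal{N}_T(v)=\{u,w_1,w_2\}$. Since $T$ is not a star, not both $w_1$ and $w_2$ are free (else $T=\mathcal{K}_{1,3}$), so after relabelling $w_1$ is not free. In $L(T)$ the three edges at $v$ span a block $B\iso\mathcal{K}_3$ on $\{e_{uv},e_{vw_1},e_{vw_2}\}$, and since $u$ is free the vertex $e_{uv}$ lies in no other block; deleting $u$ removes exactly $e_{uv}$, so $L(T')=L(T)\setminus e_{uv}$, the block $B$ is replaced by the single edge $\{e_{vw_1},e_{vw_2}\}$, and every other block of $L(T)$ is untouched.

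The inequality $\indmat L(T)\le\indmat L(T')+1$ is the easy half. Starting from a minimal cover of $L(T')$ by clique-neighborhoods, each of whose cliques is still a clique of $L(T)$ covering at least the same edges, I adjoin the single clique $B$; its clique-neighborhood contains the only two edges of $L(T)$ not already accounted for, namely $\{e_{uv},e_{vw_1}\}$ and $\{e_{uv},e_{vw_2}\}$. This gives a clique-neighborhood cover of $L(T)$ with one extra member, so the bound follows from Theorem~\ref{ind-CN}.

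For the reverse inequality $\indmat L(T)\ge\indmat L(T')+1$ I would take a minimal clique-neighborhood cover $\mathcal{C}$ of $L(T)$ and locate inside it a clique that becomes redundant after deleting $u$. Let $w_2$ be a free neighbour of $v$ other than $u$. The edge $\{e_{uv},e_{vw_2}\}$ can be covered only by a clique through $e_{uv}$ or $e_{vw_2}$, and since neither of these vertices lies in any block of $L(T)$ except $B$, every such clique is contained in $B$; hence $\mathcal{C}$ has a member sitting in $B$, which I may enlarge to $B$ itself. Deleting this member and, if necessary, enlarging a clique of $\mathcal{C}$ lying in the block at $w_1$ so that it contains the cutpoint $e_{vw_1}$, I would check that the resulting family covers $E(L(T'))$: in $L(T')$ the block $B$ has shrunk to the edge $\{e_{vw_1},e_{vw_2}\}$, whose only vertex of positive degree outside that block, $e_{vw_1}$, is now covered, while every remaining edge of $L(T')$ was already covered by a member of $\mathcal{C}$ other than the one removed. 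This yields a clique-neighborhood cover of $L(T')$ with one fewer member, so $\indmat L(T')\le\indmat L(T)-1$.

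The step I expect to be the main obstacle is this last one: I must be sure that $B$ genuinely costs one clique in every minimal cover of $L(T)$ that is no longer needed in $L(T')$ — equivalently, that the leftover edge $\{e_{vw_1},e_{vw_2}\}$ of $L(T')$ can be absorbed into the covering of the branch at $w_1$ for free. This requires that $v$ have a second free neighbour and that its non-free neighbour carry a genuine branch; it can fail otherwise (for instance when $\deg_T w_1=2$ with a further free neighbour, or when $v$ has only one free neighbour), so the clean count ``$-1$'' should be carried out under the degree hypotheses on $\mathcal{N}_T(v)$ used in the previous proposition. Making that bookkeeping precise — or, equivalently, deducing the statement on the regularity side by applying at the vertex $e_{uv}$ a standard vertex-deletion bound for $\reg$, using the identity $L(T)\setminus\{e_{uv},e_{vw_1},e_{vw_2}\}=L(T\setminus v)$ and that $L(T\setminus v)$ is the disjoint union of the line graphs of the two branches of $T$ at $v$ other than the pendant $u$ — is the technical heart of the proof.
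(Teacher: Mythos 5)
Your first half is fine: adjoining the triangle $B$ at $v$ to a minimal clique-neighborhood cover of $L(T')$ covers the only two new edges $\{e_{uv},e_{vw_1}\}$ and $\{e_{uv},e_{vw_2}\}$, and since $L(T)$ and $L(T')$ are chordal, Theorem~\ref{ind-CN} gives $\indmat L(T)\le \indmat L(T')+1$; this is the same clique-neighborhood mechanism the paper invokes. The reverse inequality, which you explicitly could not complete, is a genuine gap --- and in fact it cannot be closed under the stated hypotheses, because the equality is false as written. Take $T$ to be the path on $x_1,x_2,x_3,x_4$ with an extra pendant vertex $u$ attached to $x_2$, and let $v=x_2$; then $\deg_T v=3$ and $T$ is not a star, but $L(T)$ is a triangle with one pendant edge while $L(T')=L(P_4)=P_3$, so $\indmat L(T)=\indmat L(T')=1$: the clique-neighborhood of the triangle at $v$ already absorbs the pendant edge, so deleting $u$ saves nothing. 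Similarly, if $u$ is the only free neighbour of $v$ and the other two neighbours of $v$ each carry two leaves, then $\indmat L(T)=\indmat L(T')=2$. These are exactly the configurations you flagged ($\deg_T w_1=2$ with a further free neighbour; only one free neighbour of $v$), so your diagnosis of where the count breaks is accurate.

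For comparison, the paper's own proof consists of the assertion that the triangle at $v$ in $L(T)$ becomes ``an edge connected to a clique'' in $L(T')$ and that therefore the number of required clique-neighborhoods decreases by one; it never verifies that a minimal cover of $L(T)$ must spend a member on that triangle which becomes redundant in $L(T')$ --- precisely the step you identified as the technical heart --- and it silently assumes the favourable configuration (a second free neighbour of $v$ together with a genuine branch at the third neighbour, as in Example~\ref{example}). So you have not missed an argument that the paper supplies. Under the hypotheses as stated, what actually holds is only $\indmat L(T')\le \indmat L(T)\le \indmat L(T')+1$ (the left inequality because $L(T')$ is an induced subgraph of $L(T)$); the equality $\indmat L(T)=\indmat L(T')+1$ requires additional assumptions on the degrees of the neighbours of $v$ of the kind imposed in the preceding proposition, and the corollary on regularity that follows inherits the same counterexample.
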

\begin{proof}

 We will use Theorem \ref{ind-CN} in order to prove the equality.
	The vertex $v$ and its neighbors yield in $L(T)$ a maximal clique of size $3$. In $L(T')$, the same vertex will give an edge which is connected to a clique. So the number of required clique-neighborhoods decreases by one.\end{proof}
The following corollary is straightforward:
\begin{Corollary}
	Let $T$ be a tree which is not a star and $u$ a free vertex of $T$ such that $\{u,v\}$ is an edge and $\deg_T v=3$. Let $T'=T\setminus\{u\}$. Then $$\reg\, R/I(L(T)))=\reg\, R/I(L(T')))+1.$$
\end{Corollary}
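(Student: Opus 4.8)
The plan is to deduce this corollary from the preceding proposition together with the fact that line graphs of trees are chordal, so that regularity and induced matching number coincide. First I would check that the hypotheses make $L(T')$ a legitimate object: since $u$ is a free vertex of $T$, the graph $T'=T\setminus\{u\}$ is again a tree (deleting a free vertex from a tree leaves a tree), and because $\deg_T v=3$ the vertex $v$ survives in $T'$ with degree $2$, so $T'$ has at least two edges and $L(T')$ is a nonempty graph. In particular both $L(T)$ and $L(T')$ are line graphs of trees, hence both are chordal by the Remark that $L$ of a tree is a block graph.

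Next I would invoke Proposition \ref{reg}: for any chordal graph $G$ one has $\reg\, S/I(G)=\indmat(G)$. Applying this to $G=L(T)$ and to $G=L(T')$ (with the appropriate polynomial ring $R$ in the edge variables) yields
\[\reg\, R/I(L(T))=\indmat L(T),\qquad \reg\, R/I(L(T'))=\indmat L(T').\]

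Finally I would apply the preceding proposition, which asserts precisely that under exactly these hypotheses $\indmat L(T)=\indmat L(T')+1$. Substituting this into the two displayed identities gives $\reg\, R/I(L(T))=\reg\, R/I(L(T'))+1$, which is the claim.

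There is essentially no obstacle here: all the real work (the clique-neighborhood count via Theorem \ref{ind-CN}) is already done in the previous proposition, and the only point requiring a moment's care is verifying that $L(T')$ is nonempty and chordal so that Proposition \ref{reg} genuinely applies to it — and this is immediate from $T$ not being a star together with $\deg_T v = 3$.
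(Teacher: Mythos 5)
Your proof is correct and follows exactly the route the paper intends: the paper labels this corollary ``straightforward'' precisely because it is the preceding proposition ($\indmat L(T)=\indmat L(T')+1$) combined with Proposition~\ref{reg} applied to the chordal graphs $L(T)$ and $L(T')$. Your additional check that $T'$ remains a tree with $L(T')$ nonempty is a harmless extra verification that the paper leaves implicit.
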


We will characterize now all the trees for which the edge ideal of their line graph has a linear resolution. Note that the property of having a linear resolution is not preserved by considering the line graph. 

\begin{Example}\label{example}\rm Let $T_1$ and $L(T_1)$ be the following tree and its line graph:

\begin{center}
	\begin{figure}[h]
		\includegraphics[height=2cm]{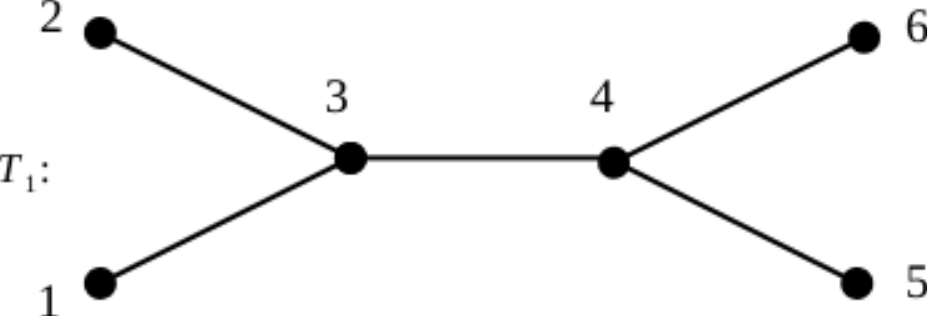}$\qquad$
		\includegraphics[height=2cm]{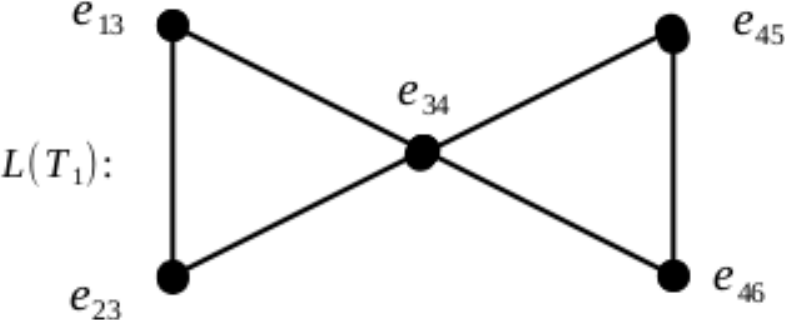}
	\end{figure}
\end{center}
Then $I(T_1)$ has a linear resolution (it is chordal with $\indmat T_1=1$) while $I(L(T_1))$ has not (it has an induced matching of size $2$, $\{e_{13},e_{23}\},\{e_{45},e_{46}\}$).
\end{Example}
We recall that \textit{a star} is the graph with the set of vertices $\{u,v_1,\ldots,v_n\}$ and with the edges $\{u,v_1\},\ldots,\{u,v_n\}.$ \textit{The path $P_n$} is the graph with $n$ vertices $u_1,\ldots,u_n$ and the edges $\{u_i,u_{i+1}\}$, for $1\leq i\leq n-1$. \textit{A broom} is the graph obtained from $P_n$ by appending $m$ new vertices and the corresponding pendant edges to the first (or last) vertex of $P_n$. \textit{A whiskered graph} is the graph obtained from the graph $G$ by adding to each of its vertices a pendant edge (together with a new vertex). The whiskered graph of $G$ is also known in graph theory as \textit{the corona of $G$}. If one adds a pendant edge (together with a new vertex) to a subset of $V(G)$, then the obtained graph is called \textit{a partially whiskered graph.} The graph $G$ is \textit{weakly chordal} if neither $G$, nor $\overline{G}$ do not have any induced cycle of length strictly greater than $4$. Note that any chordal graph is weakly chordal (see for instance \cite{CST}).
\begin{Theorem}\label{linres}
Let $T$ be a tree. Then $I(L(T))$ has a linear resolution if and only if $T$ satisfies one of the following conditions:

\begin{itemize}
	\item[i)]  a star graph;
	\item[ii)] a broom of diameter $3$
	\item[iii)] a (partially) whiskered star
	\item[iv)] $P_n$, $2\leq n\leq5$ 
\end{itemize}
\end{Theorem}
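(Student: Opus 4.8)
The plan is to push the statement through Fr\"oberg's Theorem, reduce it to a purely combinatorial condition on $T$ alone, and then classify the trees satisfying that condition by a short case analysis on the diameter.

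\emph{Step 1 (reduction to gap-freeness).} By Theorem~\ref{Froberg}, $I(L(T))$ has a linear resolution if and only if $L(T)$ is co-chordal, i.e.\ $\overline{L(T)}$ is chordal. Since $T$ is a tree, $L(T)$ is a block graph, hence chordal, hence in particular weakly chordal, so $\overline{L(T)}$ has no induced cycle of length $\ge 5$. Therefore $\overline{L(T)}$ is chordal exactly when it has no induced $C_4$, i.e.\ exactly when $L(T)$ is gap-free (the implication ``linear resolution $\Rightarrow$ gap-free'' is Remark~\ref{gapfree}, and weak chordality supplies the converse). So it suffices to characterize the trees $T$ for which $L(T)$ is gap-free.

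\emph{Step 2 (reading a gap off $T$).} Unwinding the definition of the line graph, a gap in $L(T)$ is a set of four distinct edges $a,b,c,d$ of $T$ with $a\cap b\neq\emptyset$, $c\cap d\neq\emptyset$ and $a\cap c=a\cap d=b\cap c=b\cap d=\emptyset$; equivalently, $a\cup b$ and $c\cup d$ are two vertex-disjoint copies of $P_3$ inside $T$. Hence $L(T)$ is gap-free if and only if $T$ contains no two vertex-disjoint copies of $P_3$, and the theorem becomes: a tree has this property if and only if it is one of the graphs in (i)--(iv).

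\emph{Step 3 (the two directions).} For ``if'', it suffices to observe that in each of (i)--(iv) there is a vertex $z$ such that $T\setminus\{z\}$ has maximum degree at most $1$ (the centre, for a star or a (partially) whiskered star; the vertex carrying the whiskers, for a broom of diameter $3$; a central vertex, for $P_n$, $n\le5$). If a copy of $P_3$ with middle vertex $m$ avoided $z$, then both of its edges would lie in $T\setminus\{z\}$ and be incident to $m$, so $m$ would have degree $\ge 2$ in $T\setminus\{z\}$; hence every copy of $P_3$ passes through $z$ and no two can be vertex-disjoint. For ``only if'', suppose $T$ has no two vertex-disjoint copies of $P_3$ and put $d=\diam(T)$. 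If $d\ge5$, a longest path $v_0v_1\cdots v_d$ already contains the disjoint triples $\{v_0,v_1,v_2\}$ and $\{v_3,v_4,v_5\}$, which is impossible. If $d\le2$, $T$ is a star, case (i) (this absorbs $P_2,P_3$). If $d=3$, $T$ is a double star $S_{p,q}$ with $p,q\ge1$; if $p,q\ge2$ then $T$ contains two vertex-disjoint copies of $P_3$, one through each of the two centres, so $\min(p,q)=1$ and $T$ is a broom of diameter $3$, case (ii) (this absorbs $P_4$). If $d=4$, $T$ has a unique centre $c$, every vertex lies within distance $2$ of $c$, each neighbour of $c$ carries only pendant leaves (otherwise $\diam(T)>4$), and at least two neighbours carry a leaf; if some neighbour $u_j$ carried two leaves $w_j,w_j'$ while another neighbour $u_i$ carried a leaf $w_i$, then $w_j-u_j-w_j'$ and $c-u_i-w_i$ would be two disjoint copies of $P_3$. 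Hence every neighbour of $c$ carries at most one leaf, which is precisely the description of a (partially) whiskered star, case (iii) (this absorbs $P_5$).

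\emph{Main obstacle.} The work is concentrated in the case $d=4$: one has to pin down the rigid shape of a diameter-$4$ tree (a unique centre, all of whose distance-$2$ vertices are leaves), exclude a neighbour of the centre bearing two leaves, and then recognise the resulting family as exactly the (partially) whiskered stars. Everything else is routine --- the definitional translations in Steps~1 and~2, the easy finite check in the ``if'' direction, and the elementary structure of trees of diameter at most $3$.
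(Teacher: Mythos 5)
Your proof is correct, and it takes a genuinely different route from the paper's. The paper argues case by case: for the ``if'' direction it inspects $L(T)$ and $\overline{L(T)}$ separately for each family (clique for a star, clique plus whisker for a broom, a diameter argument plus weak chordality for partially whiskered stars), and for the ``only if'' direction it bounds $\diam(T)\leq 4$ via $\reg$ of the induced $L(P_6)=P_5$ and then excludes specific diameter-$3$ and diameter-$4$ configurations by exhibiting gaps in pictures of their line graphs. You instead compress everything into one clean combinatorial criterion: since $L(T)$ is chordal, hence weakly chordal, Fr\"oberg reduces the question to gap-freeness of $L(T)$, and an induced $2K_2$ in $L(T)$ is exactly a pair of vertex-disjoint copies of $P_3$ in $T$; the theorem then becomes the classification of trees with no two disjoint $P_3$'s, which you settle uniformly (the ``if'' direction via a vertex $z$ with $T\setminus\{z\}$ of maximum degree at most $1$, the ``only if'' direction by the diameter case analysis). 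This buys a figure-free, self-contained argument in which both directions follow from the same lemma, and it in particular replaces the paper's justification that a partially whiskered star's line graph is gap-free because $\diam(L(T))=3$ (which is not a valid general principle, though the conclusion is right) with a direct verification. What the paper's route buys is the explicit algebraic data along the way (the regularity computation for $L(P_6)$ and the concrete gap pairs in the small examples), which it reuses elsewhere. Two minor points you share with the paper: the degenerate case $T=P_2$ (where $I(L(T))=0$) is handled only by convention, and the standard structure facts for trees of diameter $3$ and $4$ (double star; unique centre with all distance-$2$ vertices leaves) are asserted rather than proved, but these are routine.
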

\begin{proof}
``$\Leftarrow$" Firstly we show that $I(L(T))$ has a linear resolution if $T$ is a star graph or is a broom of diameter $3$. If $T$ is a star, then $L(T)$ is a clique, therefore $I(L(T))$ has a linear resolution. If $T$ is a broom of diameter $3$ then $L(T)$ is a clique with one whisker, therefore $\overline{L(T)}$ is a star graph, hence it is chordal. According to Fr\"oberg's Theorem, $I(L(T))$ has a linear resolution.

 Next we consider the case when $T$ is a partially whiskered star. In this case $L(T)$ has a clique and some pendant edges (at most one to each vertex). Since $\diam (L(T))=3$, the line graph $L(T)$ does not have any induced gap. Therefore $\overline{L(T)}$ does not contain $C_4$ as an induced cycle. Moreover, $L(T)$ is chordal, thus it is weakly chordal and $\overline{L(T)}$ does not contain any induced cycle $C_k$, with $k\geq5$. Therefore $\overline{L(T)}$ is chordal and, by Fr\"oberg's Theorem \ref{Froberg}, $I(L(T))$ has a linear resolution. 
 
 Finally, for $P_n$, with $n\in\{2,3,4,5\}$, one may see that they are particular classes of whiskered stars, so $I(L(P_n))=I(P_{n-1})$ has a linear resolution. 

``$\Rightarrow$" Conversely, we assume that $I(L(T))$ has a linear resolution. If $\diam(T)\geq5$, then $T$ contains $P_6$ as an induced subgraph. Since $L(P_6)=P_5$, one has that $\reg(I(L(P_5)))=3$, therefore $\reg(I(T))\geq\reg(I(L(P_5)))=3$ by Proposition \ref{regind} and $I(T)$ does not have a linear resolution. Hence, $\diam (T)\leq 4$. It is clear that if $T$ has only one edge ($T=P_2$), then $I(L(T))$ has a linear resolution. We discuss now the remaining cases:

\textit{Case 1:} If $\diam T=2$, then $T$ is a star graph. 

\textit{Case 2:} Assume now that $\diam(T)=3$. According to Example \ref{example}, $T$ cannot contain an induced subgraph of the following form  
\begin{center}
\begin{figure}[h]
\includegraphics[height=2cm]{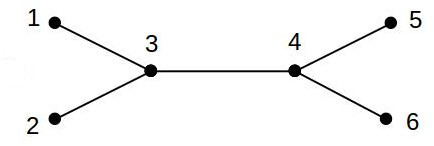}
\end{figure}
\end{center}since the edge ideal of its line graph does not have a linear resolution.
Therefore $T$ can be a broom graph or the path $P_4$.

\textit{Case 3:} If $\diam(T)=4$ then $T$ is either $P_5$ or $T$ can have at most one vertex of degree greater than or equal to $3$ (otherwise it contains an induced subgraph as in Example \ref{example}). Then $T$ can contain as an induced subgraph one of the following graphs: 
\begin{center}
\begin{figure}[h]
\includegraphics[height=2.6cm]{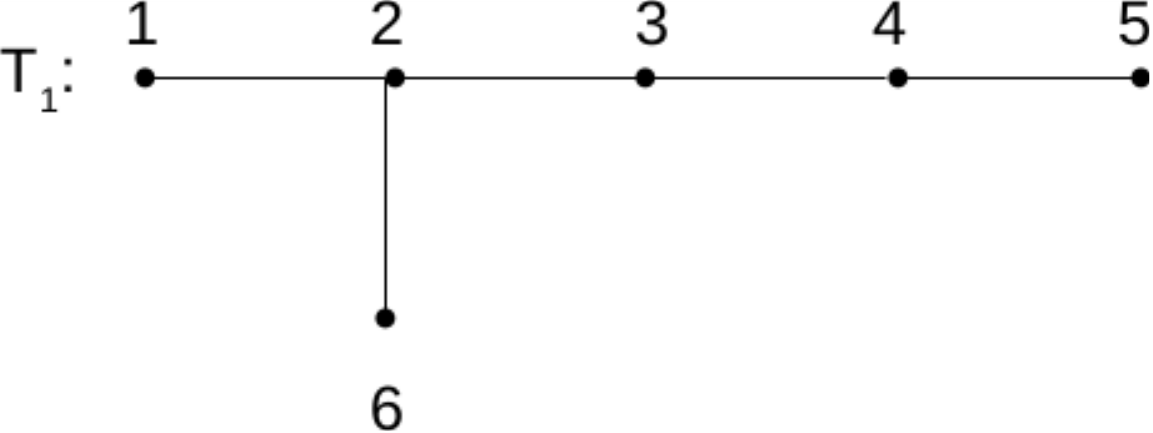}$\ \ \ $
\includegraphics[height=2.6cm]{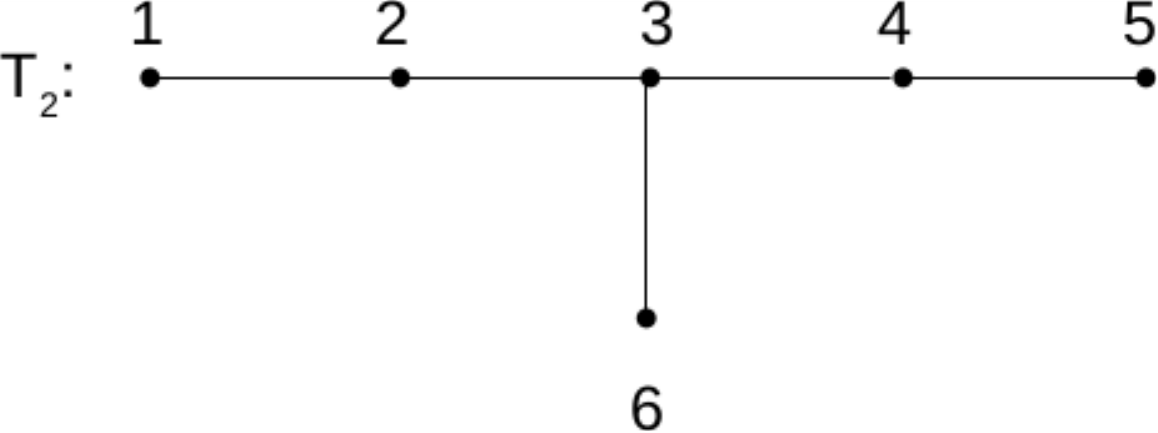}

\includegraphics[height=2.6cm]{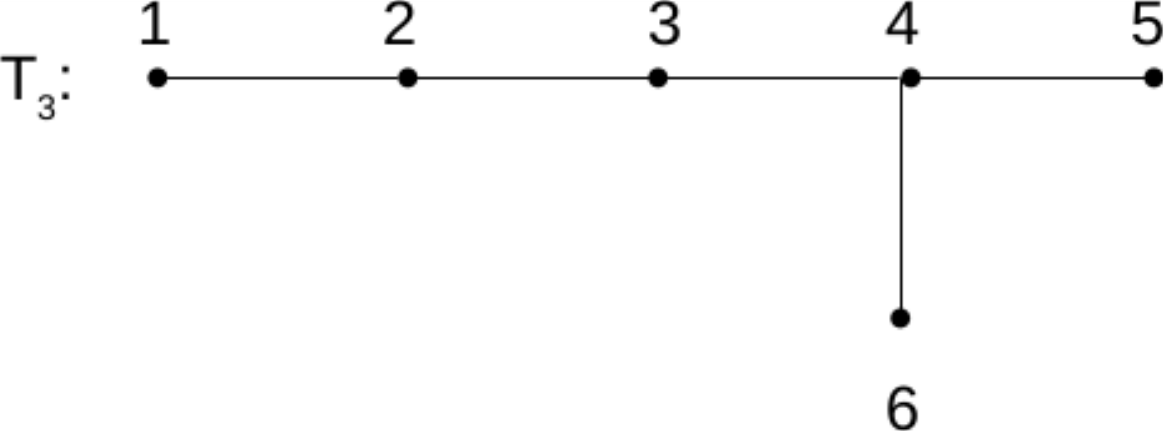}
\end{figure}
\end{center}
whose line graphs are

\begin{center}
\begin{figure}[h]
\includegraphics[height=2.6cm]{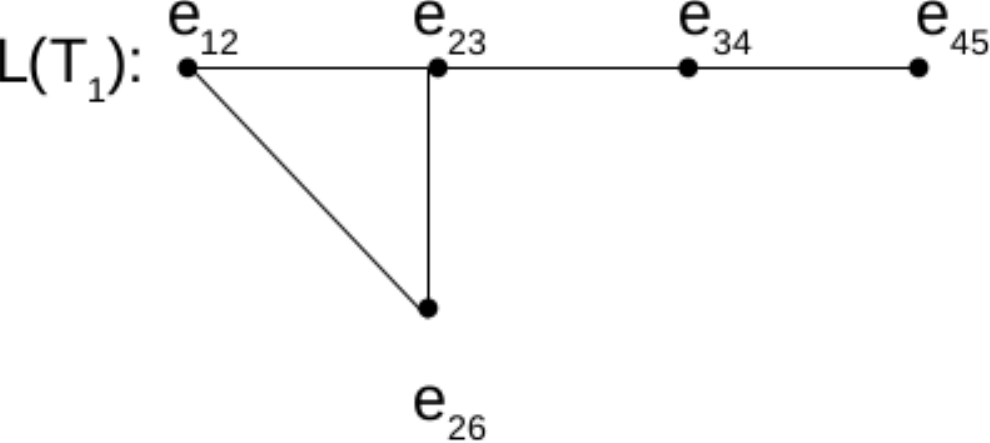}$\ \ \ $
\includegraphics[height=2.6cm]{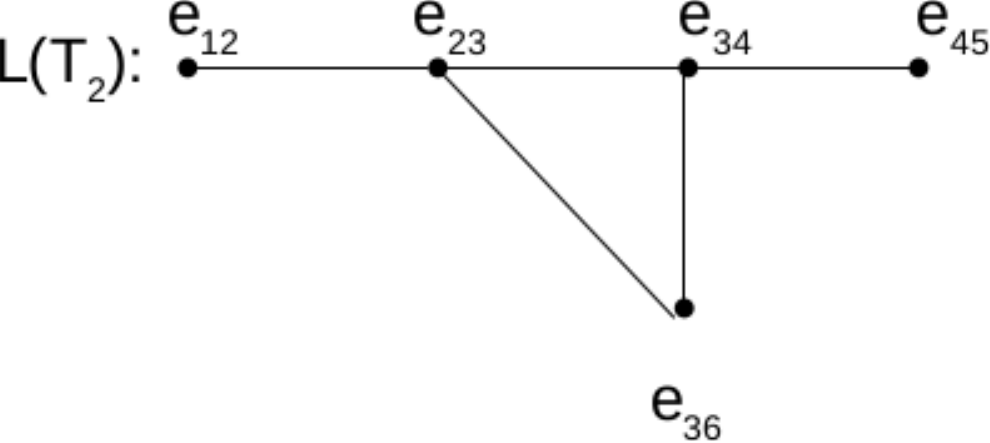}

\includegraphics[height=2.6cm]{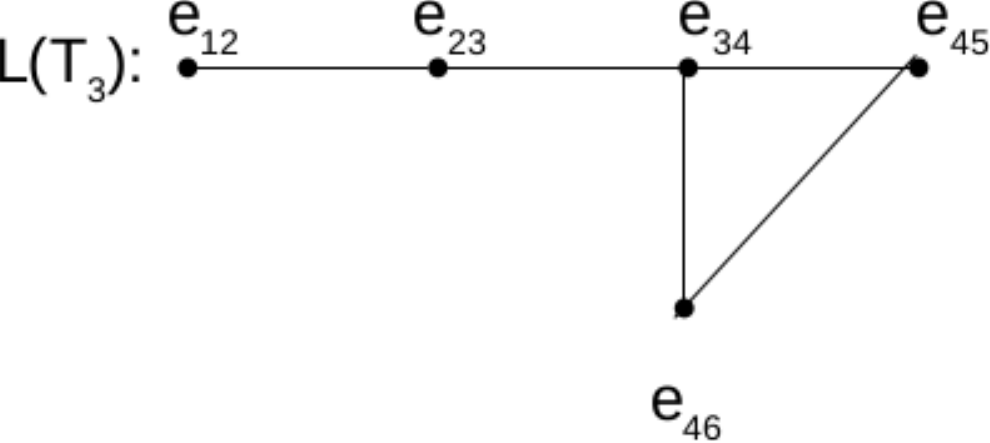}
\end{figure}
\end{center}
One may easily check that both $L(T_1)$ and $L(T_3)$ have an induced gap ($\{e_{12},e_{26}\}$ and $\{e_{34},e_{45}\}$ for $L(T_1)$, respectively $\{e_{12},e_{23}\}$ and $\{e_{46},e_{45}\}$ for $L(T_3)$), therefore, their edge ideals do not have a linear resolution. Hence $T$ is a (partially) whiskered star. 
\end{proof}
\begin{Remark}\rm
	Note that in the above theorem, broom graphs of diameter $3$ and path graphs $P_n$, with $2\leq n\leq 5$ are particular classes of partially whiskered stars. Still we consider them separately due to their importance. 
\end{Remark}
The following corollary follows by Fr\"oberg's Theorem:
\begin{Corollary}
Let $T$ be a tree. Then $L(T)$ is co-chordal if and only if $T$ is one of the following graphs
\begin{itemize}
	\item[i)]  a star graph;
	\item[ii)] a broom of diameter $3$
	\item[iii)] a (partially) whiskered star
	\item[iv)] $P_n$, $2\leq n\leq 5$ 
\end{itemize}
\end{Corollary}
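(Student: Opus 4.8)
The plan is to deduce this statement immediately from the two results already in hand, namely Fr\"oberg's Theorem \ref{Froberg} and the characterization in Theorem \ref{linres}. The key observation is that Fr\"oberg's Theorem, applied to the graph $L(T)$ rather than to $T$, gives that $I(L(T))$ has a linear resolution if and only if $L(T)$ is a co-chordal graph; there is nothing special about the fact that $L(T)$ happens to be a line graph, since Fr\"oberg's Theorem holds for any finite simple graph.

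First I would recall this equivalence explicitly: $L(T)$ is co-chordal $\iff$ $I(L(T))$ has a linear resolution. Then I would invoke Theorem \ref{linres}, which states precisely that $I(L(T))$ has a linear resolution if and only if $T$ is a star graph, a broom of diameter $3$, a (partially) whiskered star, or $P_n$ with $2\leq n\leq 5$. Chaining the two biconditionals yields that $L(T)$ is co-chordal if and only if $T$ belongs to the stated list, which is exactly the assertion of the corollary. The list of graphs in the corollary is identical to the list in Theorem \ref{linres}, so no further bookkeeping is needed.

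There is no genuine obstacle here: the corollary is a formal consequence of combining Theorem \ref{Froberg} with Theorem \ref{linres}, and the only thing to be careful about is to apply Fr\"oberg's criterion to $L(T)$ and not to $T$ itself. I would write the proof in one short paragraph along these lines.

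\begin{proof}
By Fr\"oberg's Theorem \ref{Froberg} applied to the graph $L(T)$, the graph $L(T)$ is co-chordal if and only if the edge ideal $I(L(T))$ has a linear resolution. By Theorem \ref{linres}, $I(L(T))$ has a linear resolution if and only if $T$ is a star graph, a broom of diameter $3$, a (partially) whiskered star, or $P_n$ with $2\leq n\leq 5$. Combining these two equivalences gives the claim.
\end{proof}
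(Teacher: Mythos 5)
Your proposal is correct and matches the paper's own reasoning exactly: the paper introduces this corollary with the remark that it ``follows by Fr\"oberg's Theorem,'' i.e.\ by applying Theorem~\ref{Froberg} to the graph $L(T)$ and chaining the resulting equivalence with Theorem~\ref{linres}. Your explicit caution that Fr\"oberg's criterion must be applied to $L(T)$ rather than to $T$ is the only point of substance, and you handle it correctly.
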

By Proposition \ref{reg} and Fr\"oberg's Theorem \ref{Froberg}, one also have the following equivalence:
\begin{Corollary}
	Let $T$ be a tree. The following are equivalent:
	\begin{itemize}
		\item[a)] $\indmat(L(T))=1$;
		\item[b)] $\cochord(L(T))=1$;
		\item[c)] $T$ is one of the following graphs:
		\begin{itemize}
			\item[i)]  a star graph;
			\item[ii)] a broom of diameter $3$
			\item[iii)] a (partially) whiskered star
			\item[iv)] $P_n$, $n\in \{2,3,4,5\}$ 
		\end{itemize}
		
	\end{itemize}
\end{Corollary}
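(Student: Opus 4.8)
The plan is to deduce everything from the preceding Corollary, which already identifies exactly the trees $T$ for which $L(T)$ is co-chordal, together with Fr\"oberg's Theorem~\ref{Froberg}, Proposition~\ref{reg}, and the definition of the co-chordal cover number. The organising observation is that, for every tree $T$, the line graph $L(T)$ is a connected chordal graph; hence Proposition~\ref{reg} makes $\indmat(L(T))$ and $\reg R/I(L(T))$ coincide, and both are in turn governed by whether $L(T)$ is co-chordal.

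First I would show that (a) holds if and only if $L(T)$ is co-chordal. Since $L(T)$ is chordal, Proposition~\ref{reg} gives $\reg R/I(L(T))=\indmat(L(T))$. If $T\neq P_2$, then $L(T)$ has at least one edge, so $I(L(T))$ is an edge ideal minimally generated in degree $2$, and therefore it has a (2-)linear resolution precisely when $\reg R/I(L(T))=1$, that is, precisely when $\indmat(L(T))=1$. By Fr\"oberg's Theorem~\ref{Froberg}, $I(L(T))$ has a linear resolution if and only if $L(T)$ is co-chordal. Combining these equivalences yields (a) $\Leftrightarrow$ ``$L(T)$ is co-chordal''. The remaining case $T=P_2$ is checked directly, with the obvious conventions for the degenerate graph $L(P_2)$.

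Next I would show that (b) holds if and only if $L(T)$ is co-chordal. By definition, $\cochord(L(T))=1$ means that $E(L(T))$ is covered by a single co-chordal subgraph of $L(T)$; for $T\neq P_2$ the graph $L(T)$ has no isolated vertices (every edge of such a $T$ has degree at least one, by Definition~\ref{degedge}), so that subgraph must be all of $L(T)$, and $\cochord(L(T))=1$ is then equivalent to $L(T)$ being co-chordal. Alternatively one can argue through the invariants: $\cochord(L(T))=1$ forces $1\le\indmat(L(T))\le\reg R/I(L(T))\le\cochord(L(T))=1$ by Propositions~\ref{regindmat} and~\ref{indmatgen}, giving (a); conversely (a) together with Proposition~\ref{reg} and Fr\"oberg's Theorem produces a linear resolution of $I(L(T))$, hence a single co-chordal cover of $E(L(T))$.

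Finally, the preceding Corollary states that $L(T)$ is co-chordal if and only if $T$ is one of the graphs listed in (i)--(iv); together with the two equivalences above this closes the circuit (a) $\Leftrightarrow$ ``$L(T)$ co-chordal'' $\Leftrightarrow$ (b) $\Leftrightarrow$ (c). I do not anticipate a genuine difficulty here: the combinatorial substance was already carried out in Theorem~\ref{linres} and the co-chordal Corollary, and the present statement only re-expresses it via Proposition~\ref{reg} and the definition of $\cochord$. The single point deserving care is the equivalence $\cochord(L(T))=1\Leftrightarrow L(T)$ co-chordal, i.e. the observation that a one-member co-chordal cover of $E(L(T))$ is forced to be $L(T)$ itself, isolated vertices (which do not occur here unless $T=P_2$) being irrelevant to co-chordality.
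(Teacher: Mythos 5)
Your argument is correct and follows essentially the same route the paper takes: it combines Proposition~\ref{reg} (so that $\indmat(L(T))=\reg R/I(L(T))$ since $L(T)$ is chordal), Fr\"oberg's Theorem~\ref{Froberg}, and the preceding corollary characterizing when $L(T)$ is co-chordal; the paper's proof is exactly this one-line deduction. The only caveat is the degenerate case $T=P_2$, where $L(T)$ is edgeless so $\indmat(L(P_2))=0$ and $\cochord(L(P_2))=0$ --- a convention issue already present in the statement as the paper gives it, which you rightly flag as needing a separate check.
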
 
In the sequel, we pay attention to determine the second graded Betti number of $I(L(T))$, where $T$ is a tree. Our results are expressed in terms of the combinatorial invariants of the tree $T$. The next two results will be extremely usefull. The first one computes the number of edges of $L(G)$, for an arbitrary finite simple graph $G$.
\begin{Proposition}\cite[Proposition 7.6.2]{Vi}\label{betti1}  If $G$ is a graph with vertices $u_1,\ldots,u_n$ and edge set $E(G)$, then the number of edges of the line graph $L(G)$ is given by
\[	|E(L(G))| = \sum\limits_{i=1}^n
{\deg u_i \choose2}	= -|E(G)| +\sum\limits_{i=1}^n\frac{\deg^2u_i}{2} .\]
	\end{Proposition}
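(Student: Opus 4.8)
The plan is to count the edges of $L(G)$ by grouping them according to the common vertex of the two adjacent edges of $G$ they come from. First I would go back to the definition: the vertices of $L(G)$ are the edges of $G$, and two of them are joined by an edge exactly when the corresponding edges of $G$ share an endpoint. So $|E(L(G))|$ is the number of unordered pairs $\{e,f\}$ of distinct edges of $G$ with $e\cap f\neq\emptyset$.

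Next I would fix a vertex $u_i$ of $G$ and look at the set of edges of $G$ incident to $u_i$; by definition this set has $\deg u_i$ elements, and any unordered pair of distinct edges from it is an edge of $L(G)$, giving $\binom{\deg u_i}{2}$ edges ``through $u_i$''. Summing $\sum_{i=1}^n\binom{\deg u_i}{2}$ over all vertices counts every edge of $L(G)$, and the point to check is that nothing is overcounted: since $G$ is a simple graph, two distinct edges of $G$ meet in at most one vertex, so each edge $\{e,f\}$ of $L(G)$ is recorded exactly once, at the unique common endpoint of $e$ and $f$. This yields the first equality $|E(L(G))|=\sum_{i=1}^n\binom{\deg u_i}{2}$.

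Finally, the second equality is a routine manipulation: write $\binom{\deg u_i}{2}=\tfrac12\deg^2 u_i-\tfrac12\deg u_i$, sum over $i$, and use the handshake lemma $\sum_{i=1}^n\deg u_i=2|E(G)|$ to replace the linear term by $|E(G)|$, obtaining $\sum_{i=1}^n\binom{\deg u_i}{2}=-|E(G)|+\sum_{i=1}^n\tfrac12\deg^2 u_i$. There is no genuine obstacle in this argument; the only subtlety worth stating explicitly is the observation that two distinct edges of a simple graph share at most one vertex, which is precisely what makes the counting exact (and what would have to be revisited for a multigraph).
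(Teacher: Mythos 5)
Your proof is correct and complete: the paper itself gives no argument for this statement (it is quoted from Villarreal's book), and what you have written is the standard double-counting proof, with the one genuinely necessary observation --- that two distinct edges of a simple graph meet in at most one vertex, so each edge of $L(G)$ is counted at exactly one vertex of $G$ --- stated explicitly, followed by the routine identity $\binom{d}{2}=\tfrac{d^2}{2}-\tfrac{d}{2}$ and the handshake lemma. Nothing further is needed.
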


In combinatorics, the first Zagreb index
$M_1 (G)$ of the graph $G$ is defined as $$M_1(G)=\sum\limits_{u\in V(T)}\deg^2 u,$$ see for instance \cite{DXN} for more details. By using Proposition~\ref{betti1}, one may express the first Zagreb index in terms of the number of edges of the graph $G$ and its line graph: 

\begin{Corollary}
	If $G$ is a graph with vertices $u_1,\ldots,u_n$ and edge set $E(G)$, then the first Zagreb number of the graph $G$ is given by
	\[ M_1(G)=2(|E(L(G))| +|E(G)|)=2\left(|E(G)|+\sum\limits_{i=1}^n
	{\deg u_i\choose2}\right).\]
\end{Corollary}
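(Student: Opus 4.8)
The plan is to derive this corollary directly from Proposition~\ref{betti1}, which already records two expressions for $|E(L(G))|$: a binomial-coefficient form and a ``sum of squares'' form. First I would recall that, by definition, $M_1(G)=\sum_{i=1}^n\deg^2 u_i$. Then I would invoke the second equality in Proposition~\ref{betti1}, namely $|E(L(G))|=-|E(G)|+\sum_{i=1}^n\frac{\deg^2 u_i}{2}$, and rewrite its right-hand side as $-|E(G)|+\frac12 M_1(G)$. Solving for $M_1(G)$ yields $M_1(G)=2\bigl(|E(L(G))|+|E(G)|\bigr)$, which is the first claimed equality.

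For the second equality I would simply substitute into this the first expression from Proposition~\ref{betti1}, namely $|E(L(G))|=\sum_{i=1}^n\binom{\deg u_i}{2}$, to obtain $M_1(G)=2\bigl(|E(G)|+\sum_{i=1}^n\binom{\deg u_i}{2}\bigr)$. This completes the proof.

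I do not expect any genuine obstacle here: the statement is an immediate algebraic rearrangement of Proposition~\ref{betti1} together with the definition of the first Zagreb index. The only point worth a remark is the internal consistency of the two displayed formulas in the corollary, which holds precisely because the two expressions for $|E(L(G))|$ in Proposition~\ref{betti1} agree with each other; that agreement is in turn the handshake identity $\sum_{i=1}^n\deg u_i=2|E(G)|$, since $\sum_{i=1}^n\binom{\deg u_i}{2}=\frac12\sum_{i=1}^n\deg^2 u_i-\frac12\sum_{i=1}^n\deg u_i=-|E(G)|+\frac12\sum_{i=1}^n\deg^2 u_i$.
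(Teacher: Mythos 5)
Your proposal is correct and is exactly the intended derivation: the paper offers no explicit proof, presenting the corollary as an immediate rearrangement of Proposition~\ref{betti1} together with the definition $M_1(G)=\sum_i \deg^2 u_i$, which is precisely what you do. Your closing remark on the handshake identity reconciling the two displayed forms is a nice (and accurate) extra check.
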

The next result determines the second Betti number of the edge ideal of a graph $G$:
\begin{Proposition}\cite{EV}\label{betti2}
	Let $I \subset S$ be the edge ideal of a graph $G$, let $V$	be the vertex set of $G$, and let $L(G)$ be the line graph of $G$. If
\[	\cdots\longrightarrow S^c(-4)\bigoplus S^b(-3) \longrightarrow S^q(-2) \longrightarrow S \longrightarrow S/I \longrightarrow 0\]
	is the minimal graded resolution of $S/I$. Then $$b = |E(L(G))|-N_t,$$ where
	$N_t$ is the number of triangles of $G$ and $c$ is the number of unordered pairs
	of edges $\{f,g\}$ such that $f\cap g = \emptyset$ and $f$ and $g$ cannot be joined by an edge.
\end{Proposition}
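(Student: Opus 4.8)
\noindent\emph{Proof proposal.}

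The plan is to view $I=I(G)$ as a Stanley--Reisner ideal and to read off the modules $S^b(-3)$ and $S^c(-4)$ from Hochster's formula. Let $\Delta=\Ind(G)$ be the simplicial complex on $V=V(G)$ whose faces are the independent sets of $G$; its minimal non-faces are exactly the edges of $G$, so $I_\Delta=I(G)$, and for every $W\subseteq V$ the full subcomplex $\Delta|_W=\{F\in\Delta:F\subseteq W\}$ is precisely $\Ind(G[W])$. Hochster's formula then reads
\[
\beta_{i,j}(S/I)=\sum_{W\subseteq V,\ |W|=j}\dim_{\kk}\widetilde{H}_{\,j-i-1}\bigl(\Ind(G[W]);\,\kk\bigr).
\]
Since $I$ is generated in degree $2$ one has $\beta_{2,2}(S/I)=0$, and since the $p$-th term of the Taylor complex of $I$ is generated in degrees at most $2(p+1)$ one has $\beta_{2,j}(S/I)=0$ for all $j\geq5$; this is what makes the displayed resolution have the stated shape, and it reduces the task to computing $b=\beta_{2,3}$ and $c=\beta_{2,4}$.

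First I would compute $b=\sum_{|W|=3}\dim_{\kk}\widetilde{H}_0(\Ind(G[W]))$. There are four graphs on three vertices, and a short inspection of their independence complexes shows that $\widetilde{H}_0$ vanishes when $G[W]$ has at most one edge, equals $\kk$ when $G[W]$ is a path on three vertices (then $\Ind(G[W])$ is an edge together with an isolated vertex), and is $2$-dimensional when $G[W]$ is a triangle (then $\Ind(G[W])$ is three isolated points). Hence $b$ equals the number of induced three-vertex paths of $G$ plus $2N_t$. Counting length-two paths by their midpoint gives $\sum_{u\in V}\binom{\deg u}{2}$; each triangle is counted three times this way and each induced three-vertex path exactly once, so the number of induced three-vertex paths is $\sum_{u}\binom{\deg u}{2}-3N_t$, and therefore
\[
b=\Bigl(\sum_{u\in V}\binom{\deg u}{2}-3N_t\Bigr)+2N_t=\sum_{u\in V}\binom{\deg u}{2}-N_t=|E(L(G))|-N_t,
\]
the last equality being Proposition~\ref{betti1}.

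Next I would compute $c=\sum_{|W|=4}\dim_{\kk}\widetilde{H}_1(\Ind(G[W]))$ by running through the eleven graphs on four vertices. If $G[W]$ has an isolated vertex, that vertex is a cone point of $\Ind(G[W])$, so the complex is contractible and contributes nothing; this disposes of four of the eleven cases at once. For the remaining seven graphs (those of minimum degree at least one) one checks case by case that $\Ind(G[W])$ is a tree, a disjoint union of contractible pieces, or a set of isolated points, hence has vanishing $\widetilde{H}_1$, in every case except $G[W]\cong 2\mathcal{K}_2$, i.e.\ two disjoint edges with no further edge among the four vertices; for that graph $\Ind(G[W])$ is a $4$-cycle and $\dim_{\kk}\widetilde{H}_1=1$. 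Consequently $c$ equals the number of $4$-element subsets $W$ with $G[W]\cong 2\mathcal{K}_2$, and these are in bijection with the unordered pairs $\{f,g\}$ of edges of $G$ with $f\cap g=\emptyset$ and no edge of $G$ joining an endpoint of $f$ to an endpoint of $g$, which is the asserted description of $c$.

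The genuinely routine parts are the two finite case analyses (graphs on three and on four vertices); the one place where care is needed is the homological bookkeeping in Hochster's formula, which is what forces $b$ to come from $\widetilde{H}_0$ on $3$-sets and $c$ from $\widetilde{H}_1$ on $4$-sets. I expect the main, though entirely elementary, obstacle to be the verification that no four-vertex induced subgraph other than $2\mathcal{K}_2$ carries an $\widetilde{H}_1$ class.
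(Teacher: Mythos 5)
Your argument is correct. Note that the paper itself offers no proof of this statement --- it is quoted from Eliahou--Villarreal \cite{EV} --- so there is nothing internal to compare against; your Hochster-formula derivation is a legitimate independent verification. The bookkeeping checks out: $\beta_{2,3}$ collects $\dim_\kk\widetilde{H}_0(\Ind(G[W]))$ over $3$-sets, which is $1$ for an induced path and $2$ for a triangle, and the count $\sum_u\binom{\deg u}{2}-3N_t$ of induced three-vertex paths combined with $2N_t$ from triangles correctly yields $|E(L(G))|-N_t$; for $\beta_{2,4}$ the cone-point reduction plus the case check on the seven isolate-free four-vertex graphs correctly isolates $2\mathcal{K}_2$ (independence complex a $4$-cycle) as the only contributor, giving the stated description of $c$. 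The one sentence a referee would ask you to expand is the degree bound killing $\beta_{2,j}$ for $j\geq 5$: as written it is fine because you invoke the Taylor complex of $I$ (whose $p$-th term involves $p+1$ generators, hence degrees at most $2(p+1)$, so $\beta_{2,j}(S/I)=\beta_{1,j}(I)=0$ for $j>4$), but the indexing is easy to misread and is worth one extra line. For context, the original argument of \cite{EV} works directly with syzygies of the Taylor complex rather than through Hochster's formula; your route is arguably cleaner, at the cost of the finite case analyses you already flag as routine.
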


We will apply the above results for edge ideals of the line graph of a tree.
\begin{Corollary}
	If $T$ is a tree with vertices $u_1,\ldots,u_n$ and edge set $E(T)$, then the number of edges of the line graph $L(T)$ is given by
	\[	|E(L(T))| = \sum\limits_{i=1}^n
	{\deg_T u_i\choose2}	= 1-n +\sum\limits_{i=1}^n\frac{\deg_T^2u_i}{2}\]
	\end{Corollary}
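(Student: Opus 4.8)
The plan is to specialize Villarreal's formula (Proposition~\ref{betti1}) to the case of a tree, where the number of edges is completely determined by the number of vertices. First I would invoke Proposition~\ref{betti1} directly: for the arbitrary finite simple graph $T$ it gives
\[
|E(L(T))| = \sum_{i=1}^n \binom{\deg_T u_i}{2} = -|E(T)| + \sum_{i=1}^n \frac{\deg_T^2 u_i}{2},
\]
which already yields the first displayed equality of the statement with no further work. So the only thing left is to rewrite the right-hand side using that $T$ is a tree.

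Next I would use the standard fact that a tree on $n$ vertices has exactly $n-1$ edges, i.e. $|E(T)| = n-1$. (If one wants to be self-contained, this follows by induction on $n$, removing a free vertex — such a vertex exists by the remark in the Preliminaries that every vertex of a tree is free or a cutpoint, and a tree with at least one edge must have a leaf.) Substituting $|E(T)| = n-1$ into the expression $-|E(T)| + \sum_i \deg_T^2 u_i / 2$ from Proposition~\ref{betti1} immediately gives
\[
|E(L(T))| = 1 - n + \sum_{i=1}^n \frac{\deg_T^2 u_i}{2},
\]
which is the second displayed equality. Combining the two displays finishes the proof.

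Alternatively — and this is essentially the same computation — one could verify the second equality of the statement purely combinatorially from the first: expanding $\binom{\deg_T u_i}{2} = \tfrac12\deg_T^2 u_i - \tfrac12 \deg_T u_i$ and summing, then applying the handshake identity $\sum_{i=1}^n \deg_T u_i = 2|E(T)| = 2(n-1)$, gives $\sum_i \binom{\deg_T u_i}{2} = \sum_i \tfrac12 \deg_T^2 u_i - (n-1)$, as required. I do not expect any genuine obstacle here: the corollary is a direct specialization, and the only non-formal input is the elementary edge count $|E(T)| = n-1$ for trees.
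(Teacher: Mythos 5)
Your proposal is correct and matches the paper's own (one-line) proof exactly: both simply specialize Proposition~\ref{betti1} using the fact that a tree on $n$ vertices has $n-1$ edges. The extra handshake-lemma verification is fine but not needed.
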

	\begin{proof} The proof is straightforward since a tree on $n$ vertices has $n-1$ edges.
	\end{proof}
Next we determine the second Betti number for the edge ideal of the line graph of trees:
\begin{Proposition}
		Let $T$ be a tree with the vertex set $\{u_1,\ldots,u_n\}$, $L(T)$ its line graph, and $I(L(T))\subset R$. If
	\[	\cdots\longrightarrow R^c(-4)\bigoplus R^b(-3) \longrightarrow S^q(-2) \longrightarrow R \longrightarrow R/I(L(T)) \longrightarrow 0\]
	is the minimal graded resolution of $R/I(L(T))$, then $$b = \sum\limits_{\{u_i,u_j\}\in E(T)}{{\deg u_i+\deg u_j-2}\choose{2}}-\sum_{i=1}^n{{\deg u_i}\choose{3}}.$$
\end{Proposition}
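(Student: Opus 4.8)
The plan is to combine the Eliahou--Villarreal formula of Proposition~\ref{betti2} with the line-graph edge count of Proposition~\ref{betti1}, but applied to the graph $G=L(T)$ rather than to $T$ itself. Indeed, since here the ambient graph is $L(T)$, Proposition~\ref{betti2} gives $b=|E(L(L(T)))|-N_t$, where $N_t$ is the number of triangles of $L(T)$. So the proof reduces to two independent computations: the number of edges of the iterated line graph $L(L(T))$, and the number of triangles of $L(T)$.

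For the first computation I would apply Proposition~\ref{betti1} with the graph taken to be $L(T)$, which yields $|E(L(L(T)))|=\sum_{w\in V(L(T))}\binom{\deg_{L(T)}w}{2}$. The vertices of $L(T)$ are exactly the edges of $T$, and by Definition~\ref{degedge} the vertex corresponding to an edge $e=\{u_i,u_j\}$ of $T$ has degree $\deg_{L(T)}e=\deg_T u_i+\deg_T u_j-2$. Summing over the edges of $T$ gives $|E(L(L(T)))|=\sum_{\{u_i,u_j\}\in E(T)}\binom{\deg u_i+\deg u_j-2}{2}$, which is precisely the first term of the asserted formula.

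For the second computation I would show that $N_t=\sum_{i=1}^n\binom{\deg u_i}{3}$. A triangle of $L(T)$ is a triple of edges of $T$ that are pairwise adjacent. The key point — which I expect to be the only step that requires an argument rather than bookkeeping — is that in a \emph{tree} three pairwise-adjacent edges must share a common vertex: if $e_1,e_2,e_3$ are pairwise adjacent but have no common vertex, then writing $e_1\cap e_2=\{a\}$, $e_1\cap e_3=\{b\}$ with $a\neq b$ forces $e_1=\{a,b\}$, and then $e_2\cap e_3\neq\emptyset$ forces $e_2=\{a,c\}$, $e_3=\{b,c\}$ for some third vertex $c$, so that $a,b,c$ span a triangle of $T$ — impossible since $T$ is acyclic. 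Conversely, any three distinct edges incident to a fixed vertex $u_i$ are pairwise adjacent and hence span a triangle of $L(T)$, and no triple of edges can be counted at two different vertices, since two common vertices would force all three edges to coincide. Hence each triangle of $L(T)$ is counted exactly once, at the unique vertex of $T$ through which its three edges pass, giving $N_t=\sum_{i=1}^n\binom{\deg u_i}{3}$.

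Substituting the two computations into $b=|E(L(L(T)))|-N_t$ then yields the stated equality. The main obstacle is the triangle-counting step, specifically the verification that every triangle of $L(T)$ comes from a ``star'' of three edges at a vertex of $T$ and never from a triangle of $T$; this is exactly where the hypothesis that $T$ is a tree is used, and everything else is a direct substitution into Propositions~\ref{betti1} and~\ref{betti2} together with Definition~\ref{degedge}.
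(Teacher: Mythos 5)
Your proposal is correct and follows essentially the same route as the paper: apply the Eliahou--Villarreal formula to $G=L(T)$, compute $|E(L(L(T)))|$ via Proposition~\ref{betti1} and Definition~\ref{degedge}, and count triangles of $L(T)$ as triples of edges through a common vertex of $T$. Your justification of the triangle count (that three pairwise-adjacent edges in a tree must be concurrent, since otherwise they would form a triangle in $T$) is in fact more detailed than the paper's one-line remark.
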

\begin{proof} According to Proposition \ref{betti2}, $b=|E(L^2(T))|-N_t(L(T))$, where we denote $L^2(T)=L(L(T))$. Note that, by Proposition \ref{betti1}, \[|E(L^2(T))|=\sum_{e\in V(L(T))}{{\deg_{L(T)} e}\choose 2}=\sum_{\{u_i,u_j\}\in E(T)}{{\deg u_i+\deg u_j-2}\choose 2}\] according to the Definition \ref{degedge}. The number of triangles of $L(T)$ is $$ N_t(L(T))=\sum_{i=1}^n{{\deg u_i}\choose{3}}$$ since a triangle in $L(T)$ is provided by $3$ neighbors of a vertex of $T$ of degree at least $3$. The statement follows.
	\end{proof}

\begin{Proposition}\label{betti2-c}
	Let $T$ be a tree with $n$ vertices, $L(T)$ the line graph of $T$, and $I(L(T))\subset R$. If
	\[	\cdots\longrightarrow R^c(-4)\bigoplus R^b(-3) \longrightarrow S^q(-2) \longrightarrow R \longrightarrow R/I(L(T)) \longrightarrow 0\]
	is the minimal graded resolution of $R/I(L(T))$. Then $$c = \sum\limits_{\begin{matrix} u,v\in V(T),\\ \d(u,v)\neq2\end{matrix}}{{|\mathcal{N}(u)\setminus \{v\}|}\choose{2}}\cdot{{|\mathcal{N}(v)\setminus \{u\}|}\choose{2}}+$$ $$+\sum\limits_{\begin{matrix}
			u,v\in V(T),\\ \d(u,v)=2
			\end{matrix}}\left[{{|\mathcal{N}(u)|}\choose{2}}\cdot{{|\mathcal{N}(v)|}\choose{2}}-(|\mathcal{N}(u)|-1)(|\mathcal{N}(v)|-1)\right].$$
\end{Proposition}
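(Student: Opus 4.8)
The plan is to reduce the number $c$ to a purely combinatorial count on the tree $T$ by means of Proposition~\ref{betti2}, and then evaluate that count. By Proposition~\ref{betti2}, $c$ is the number of unordered pairs $\{F,G\}$ of edges of $L(T)$ that are disjoint and cannot be joined by an edge of $L(T)$; equivalently, $c$ is the number of gaps of $L(T)$. Since $V(L(T))=E(T)$ and two vertices of $L(T)$ are adjacent exactly when the corresponding edges of $T$ share a vertex, an edge of $L(T)$ is precisely a pair of edges of $T$ with a common vertex, i.e.\ a path of length two in $T$. Such an edge has a well-defined \emph{center} $w\in V(T)$ and two \emph{tips} $a\neq b$ in $\mathcal{N}(w)$; call it a cherry at $w$, so that edges of $L(T)$ correspond bijectively to triples $(w;\{a,b\})$ with $a\neq b$ in $\mathcal{N}(w)$. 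I would then write $F=(u;\{p,q\})$, $G=(v;\{r,s\})$ and reformulate the two defining conditions of a gap in terms of $u,v,p,q,r,s$.

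First I would observe that the condition ``$F$ and $G$ cannot be joined'' already forces $F$ and $G$ to be disjoint (a common vertex of $L(T)$, being a common edge of $T$, shares a $T$-vertex with everything), and that it is equivalent to: no edge of $T$ among $\{up,uq\}$ shares a $T$-vertex with an edge of $T$ among $\{vr,vs\}$. Running through the (at most four) ways two such edges of $T$ can meet, this fails exactly when $u=v$, or $v\in\{p,q\}$, or $u\in\{r,s\}$, or $\{p,q\}\cap\{r,s\}\neq\emptyset$, and holds otherwise. So from now on $u\neq v$, $p,q\in\mathcal{N}(u)\setminus\{v\}$, $r,s\in\mathcal{N}(v)\setminus\{u\}$, and the only remaining obstruction is a common tip, which requires a common neighbour of $u$ and $v$. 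Here the tree structure is decisive: in a tree, $\mathcal{N}(u)\cap\mathcal{N}(v)\neq\emptyset$ forces $\dist(u,v)\leq 2$; the value $1$ is impossible (it would create a triangle) and $0$ is excluded, so a common neighbour exists if and only if $\dist(u,v)=2$, in which case there is exactly one of them (two would create a cycle of length $4$), namely the midpoint $w$ of the unique $u$--$v$ path.

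It then remains to count, grouping the gaps $\{F,G\}$ by the unordered pair of centers $\{u,v\}$ (recovered from $\{F,G\}$ via the center map, so each gap is counted once; note $u=v$ never occurs, which is why the displayed sums run over distinct vertices). If $\dist(u,v)\neq 2$: every cherry at $u$ with both tips in $\mathcal{N}(u)\setminus\{v\}$, paired with every cherry at $v$ with both tips in $\mathcal{N}(v)\setminus\{u\}$, is a gap, and conversely; this contributes $\binom{|\mathcal{N}(u)\setminus\{v\}|}{2}\binom{|\mathcal{N}(v)\setminus\{u\}|}{2}$, which for $\dist(u,v)\geq 3$ is just $\binom{|\mathcal{N}(u)|}{2}\binom{|\mathcal{N}(v)|}{2}$ since then $v\notin\mathcal{N}(u)$ and $u\notin\mathcal{N}(v)$. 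If $\dist(u,v)=2$, then likewise $v\notin\mathcal{N}(u)$ and $u\notin\mathcal{N}(v)$, so one counts all pairs of a cherry at $u$ and a cherry at $v$ and removes those in which the midpoint $w$ is a tip of both (the other tip of each being free), giving $\binom{|\mathcal{N}(u)|}{2}\binom{|\mathcal{N}(v)|}{2}-(|\mathcal{N}(u)|-1)(|\mathcal{N}(v)|-1)$. Summing the two types of contributions over all pairs $\{u,v\}$ produces the asserted formula; vertices of degree $\leq 1$ need no separate treatment since the relevant binomial coefficients vanish.

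The only genuine difficulty is the bookkeeping: making sure every gap is counted exactly once (this is what the center map guarantees) and separating the $\dist(u,v)\neq 2$ and $\dist(u,v)=2$ cases cleanly, which rests entirely on the two elementary facts that a tree contains no triangle and no cycle of length $4$. The finite case check of when two cherries ``can be joined'', and the two binomial counts in each case, are routine.
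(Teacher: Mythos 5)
Your argument is correct and follows essentially the same route as the paper: both count the gaps of $L(T)$ by grouping them according to the pair of center vertices $\{u,v\}$ in $T$ and splitting into the cases $\dist(u,v)\neq 2$ and $\dist(u,v)=2$, using that a tree contains no $C_3$ or $C_4$ to control common neighbours. Your write-up is in fact more careful than the paper's (which argues from representative pictures), notably in making the cherry/center bijection and the joinability criterion explicit and in observing that the sums must be read over unordered pairs of distinct vertices.
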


\begin{proof}
	By Proposition \ref{betti2-c}, $c$ is the number of pairwise disjoint edges $f$ and $g$ in $L(T)$, which are not connected by any edge. Taking into account the relation between $T$ and $L(T)$, we can consider the following cases:
	
	\textit{Case 1:} There are two vertices $u$ and $v$ such that $\{u,v\}\in E(T)$ and $\{u,v\}$ is not a leaf in $T$. We may assume that the induced subgraph is of the form
	
	\begin{center}
		\begin{figure}[h]
			\includegraphics[height=2cm]{ex7.pdf} $\qquad$
			\includegraphics[height=2cm]{ex7l.pdf}
		\end{figure}
	\end{center}
where $\{u,v\}=\{3,4\}$.
	Therefore we have only one gap in $L(T_1)$, namely $\{e_{13},e_{23}\},$ $\{e_{45},e_{46}\}$. Note that this gap is induced by the vertices $1,2,5,6$ from $T$. Therefore, in general if $\mathcal{N}(u)$ and $\mathcal{N}(v)$ are the set of neighbors of $u$ and $v$, a gap can be obtained by the edges $\{u_1,u_2\},\{v_1,v_2\}$ where $u_1,u_2\in \mathcal{N}(u)\setminus \{v\}$ and $v_1,v_2\in \mathcal{N}(v)\setminus \{u\}$. Therefore, vertices $u$ and $v$ give $\displaystyle{{|\mathcal{N}(u)\setminus \{v\}|}\choose{2}}\cdot{{|\mathcal{N}(v)\setminus \{u\}|}\choose{2}}$ gaps in $L(T)$. Note that, if $u$ or $v$ are of degree 2, their neighbors do not induce any gap in $L(T)$.
	
	\textit{Case 2:} The next case is the one when $u$ and $v$ are vertices of $T$ such that $\d_T(u,v)=2$ and they are not free vertices. We may assume that $T$ contains the following graph as an induced subgraph (where $u=3$ and $v=5$)
	\begin{center}
		\begin{figure}[h]
			\includegraphics[height=2.3cm]{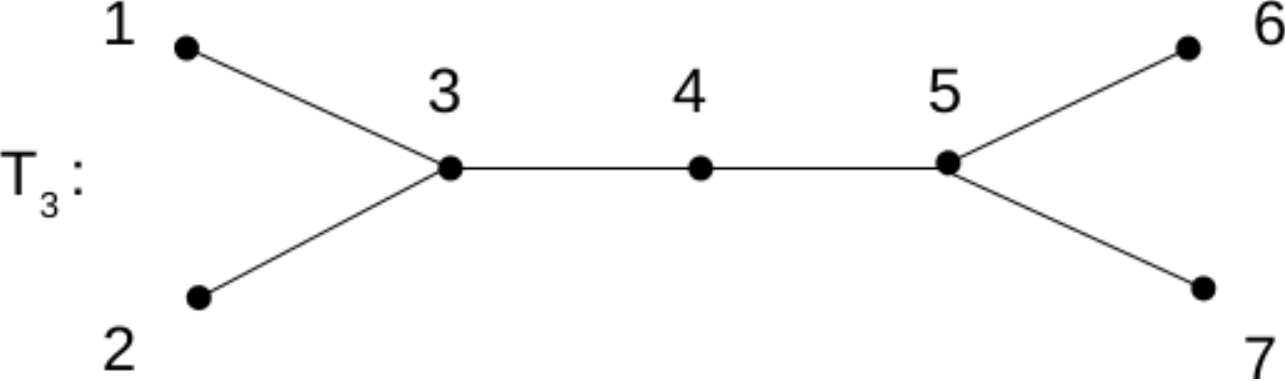} \end{figure}
		\end{center}
	whose line graph is
	\begin{center}
		\begin{figure}[h]
			\includegraphics[height=2.3cm]{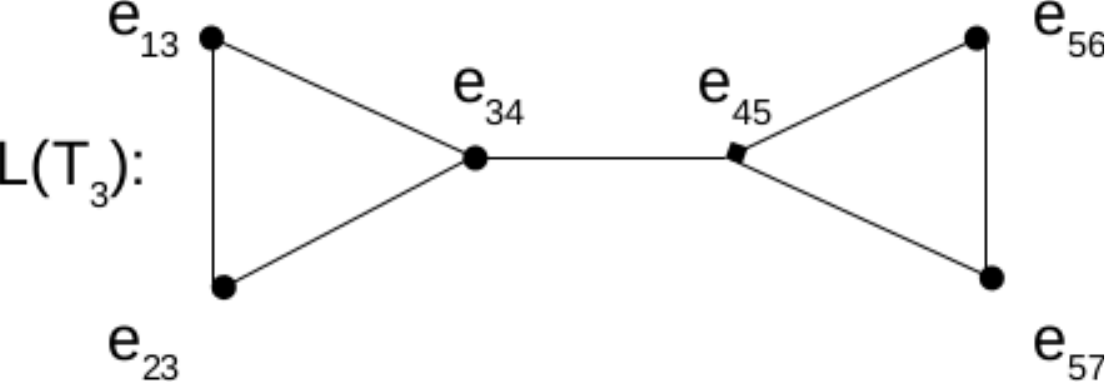}
		\end{figure}
	\end{center}
	 The total number of pairs that can be formed by using the edges which come from the neighbors of vertices $3$ and $5$ is $\displaystyle{{|\mathcal{N}(3)|}\choose{2}}\cdot{{|\mathcal{N}(5)|}\choose{2}}$. The gaps are obtained by taking the edge $\{e_{13},e_{23}\}$ and any edge from the right triangle or the edge $\{e_{56},e_{57}\}$ and any edge from the left triangle. Therefore, we have to remove $|\mathcal{N}(3)-1|\cdot|\mathcal{N}(5)-1|$ pairs of edges. Therefore, in general, these type of vertices $u$ and $v$ yield $${{|\mathcal{N}(u)|}\choose{2}}\cdot{{|\mathcal{N}(v)|}\choose{2}}-(|\mathcal{N}(u)|-1)(|\mathcal{N}(v)|-1)$$ gaps in $L(T)$. The statement follows.
	
	\textit{Case 3:} Let's assume now that $u$ and $v$ are vertices of $T$ such that $\d_T(u,v)\geq3$ and they are not free vertices. We will consider the case when distance is $3$, but the arguments are valid also for higher distances. 
	We may assume that $T$ contains the following graph as an induced subgraph (where $u=3$ and $v=6$)
	\begin{center}
		\begin{figure}[h]
			\includegraphics[height=2.5cm]{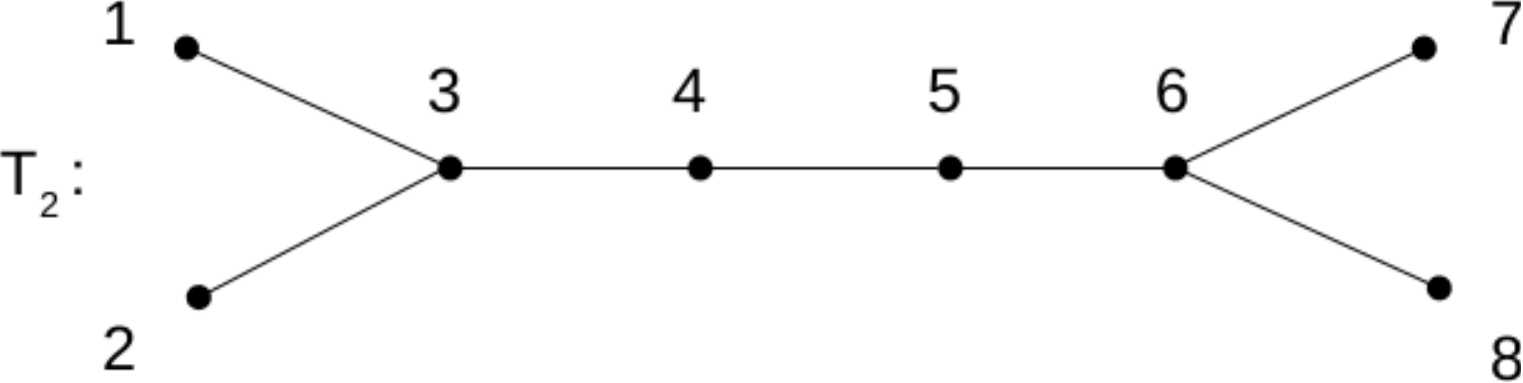} 
		\end{figure}
	\end{center}
	whose line graph is
	\begin{center}
		\begin{figure}[h]
					\includegraphics[height=2.5cm]{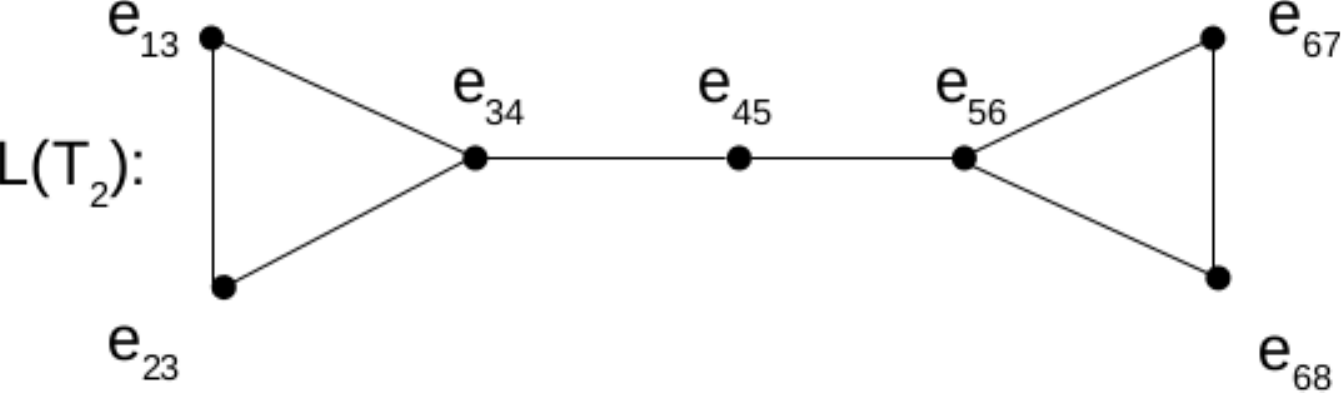}
		\end{figure}
	\end{center}
 Note that, in this case, any pair formed by an edge from the left triangle and one from the right one is a gap. The edges of the triangles are induced by the neighbors of the vertices $3$ and $6$. Hence, in general, we have $\displaystyle{{|\mathcal{N}(u)\setminus \{v\}|}\choose{2}}\cdot{{|\mathcal{N}(v)\setminus \{u\}|}\choose{2}}$. We kept the notation from the required formula, but, in this case $\mathcal{N}(v)\setminus \{u\}=\mathcal{N}(v)$ and $\mathcal{N}(u)\setminus \{v\}=\mathcal{N}(u)$.

One may also note in the figure that there are also two gaps, $\{e_{34},e_{45}\},\{e_{67},e_{68}\}$ and $\{e_{45},e_{56}\},\{e_{13},e_{23}\}$, but they come from the vertices $4,6$, and $3,5$ respectively which are at distance $2$, so they were discussed above.
 \end{proof}
The computation of the number of cycles in a graph is ofinterest in combinatorics \cite{Al}. The results obtained so far allow us to determine the number of induced cycles in $\overline{L(T)}$, where $T$ is a tree. We keep the notation from the above results.
\begin{Proposition}
	Let $T$ be a tree. Then $\overline{L(T)}$ has $$\sum\limits_{u,v,w\in V(T)}|N(u)\setminus (N(v)\cup N(w))|\cdot|N(v)\setminus (N(u)\cup N(w))|\cdot|N(w)\setminus (N(u)\cup N(v))|$$ cycles $C_3$, $c$ cycles $C_4$, and no cycle of length greater than or equal to $5$.
\end{Proposition}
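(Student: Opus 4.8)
The plan is to prove the three assertions separately, reading ``cycle'' throughout as ``induced (chordless) cycle'', in line with the rest of the paper; a long cycle with chords lives in $\overline{L(T)}$ as soon as $T$ has a large matching, so only the chordless ones can be meant. For the first assertion, since $T$ is a tree, $L(T)$ is chordal, hence weakly chordal; weak chordality is symmetric in a graph and its complement by its very definition, so $\overline{L(T)}$ is weakly chordal as well and therefore has no induced cycle of length strictly greater than $4$.

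\emph{The $C_4$'s.} By the definition of an induced gap recalled in Section~1, a set of four vertices of $\overline{L(T)}$ spans an induced $4$-cycle exactly when the two corresponding edges $f,g$ of $L(T)$ are disjoint and cannot be joined by an edge of $L(T)$; moreover the $4$-cycle recovers the pair $\{f,g\}$ as its two diagonals in $\overline{L(T)}$, and conversely, so the assignment is a bijection. Hence $\overline{L(T)}$ has exactly $c$ induced $4$-cycles, where $c$ is the quantity computed in Proposition~\ref{betti2-c}.

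\emph{The $C_3$'s.} A triangle of $\overline{L(T)}$ is a set of three pairwise non-adjacent vertices of $L(T)$, i.e.\ a set of three pairwise disjoint edges of $T$ --- a $3$-matching of $T$. So it remains to count the $3$-matchings of $T$ and to recast the count through the neighborhoods in $T$: one records one endpoint of each matching edge, say $e_1=\{u,u'\}$, $e_2=\{v,v'\}$, $e_3=\{w,w'\}$ with $u'\in\mathcal N(u)$, $v'\in\mathcal N(v)$, $w'\in\mathcal N(w)$, and translates the condition that $e_1,e_2,e_3$ be pairwise disjoint into membership conditions of the form $u'\in\mathcal N(u)\setminus(\mathcal N(v)\cup\mathcal N(w))$, and cyclically; summing the number of admissible triples $(u',v',w')$ over all choices of $u,v,w$ produces the displayed expression, provided each matching is recorded with the correct multiplicity.

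The first two parts are essentially formal: they rest only on weak chordality and on the definition of a gap together with Proposition~\ref{betti2-c}. The real work --- and the main obstacle --- is the $C_3$ count: one must pin down precisely how the neighborhood exclusions $\mathcal N(v)\cup\mathcal N(w)$ encode the disjointness of the three tree-edges, rule out degenerate contributions in which two of the recorded edges coincide or share a vertex, and keep exact track of the number of labelings of a given matching, all while using that $T$ is a tree, so that two of its edges meet in at most one vertex and there are no cycles closing up a configuration. This bookkeeping is where the argument must be carried out with care.
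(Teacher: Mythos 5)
Your treatment of the cycles of length at least $5$ and of the $C_4$'s is correct and coincides with the paper's argument: $L(T)$ is chordal, hence weakly chordal, so $\overline{L(T)}$ has no induced cycle of length greater than $4$; and the induced $4$-cycles of $\overline{L(T)}$ correspond bijectively (via their diagonals) to the unordered pairs of disjoint, unjoinable edges of $L(T)$, i.e.\ to the gaps counted by $c$ in Proposition~\ref{betti2-c}.

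The $C_3$ part, however, is not a proof. Your reduction is sound and is essentially the paper's: a triangle of $\overline{L(T)}$ is a set of three pairwise non-adjacent vertices of $L(T)$, that is, three pairwise disjoint edges of $T$, each lying in the clique of $L(T)$ determined by one of its endpoints. But the entire content of the claim is the identity between the number of such triples and the displayed sum over $u,v,w$, and you explicitly defer that verification (``provided each matching is recorded with the correct multiplicity''; ``this bookkeeping is where the argument must be carried out with care''). Announcing that the bookkeeping is the hard part is not the same as doing it. Concretely, one must fix whether the sum ranges over ordered or unordered triples of distinct vertices; account for the fact that each edge of the matching has two endpoints, so a given $3$-matching is a priori produced by up to $2^3$ choices of the roots $(u,v,w)$; and check that the condition $u'\in\mathcal N(u)\setminus(\mathcal N(v)\cup\mathcal N(w))$ really encodes disjointness --- note that $u'\notin\mathcal N(v)$ forbids $u'$ from being \emph{adjacent} to $v$, which is strictly stronger than $u'\notin\{v,v'\}$, so certain disjoint configurations are excluded for one choice of roots and must be shown to be recovered by another. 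None of this is carried out, so the $C_3$ count remains unestablished in your write-up; the set-up is right, but the step that actually proves the formula is missing.
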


\begin{proof} Since $L(T)$ is a chordal graph, then it is weakly chordal. In particular, $\overline{L(T)}$ does not contain any cycle of length greater than or equal to $5$. According to Proposition \ref{betti2-c}, $L(T)$ contains $c$ gaps, therefore in $\overline{L(T)}$ there are exactly $c$ cycles of length $4$. For computing the numbers of $C_3$ in $\overline{L(T)}$, one has to note that each such a cycle comes from non-adjacent vertices from three different maximal cliques, and each maximal clique is given by the neighbors of a cutpoint. Therefore, there are exactly   $$\sum\limits_{u,v,w\in V(T)}|N(u)\setminus (N(v)\cup N(w))|\cdot|N(v)\setminus (N(u)\cup N(w))|\cdot|N(w)\setminus (N(u)\cup N(v))|$$cycles of length three in $\overline{L(T)}$.
	\end{proof}
\section{Caterpillar graph and its line graph}
We consider now a particular class of trees, namely caterpillar trees and we pay attention on the projective dimension and the Krull dimension of $I(L(T))$.

We recall that \textit{a caterpillar graph} is a tree in which the removal of all pendant vertices results in a chordless path. The chordless path is called \textit{the backbone of the graph}. The edges from the backbone to the pendant vertices are called \textit{the hairs} of the caterpillar graph. 
Firstly, we compute the Castelnuovo--Mumford regularity of the edge ideal of the line graph of a caterpillar graph:
\begin{Proposition}
	Let $T$ be a caterpillar tree such that each vertex has degree one or less than or equal to four. Then $\reg\, R/I(L(T)))$ is equal to the number of cliques from the graph $L(T))$, or, equivalently, to the number of cutpoints of $T$.
\end{Proposition}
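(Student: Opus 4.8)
The plan is to reduce the statement to a computation of the induced matching number of $L(T)$ and then bound it from both sides. Since $L(T)$ is chordal (Remark above), Proposition~\ref{reg} gives $\reg R/I(L(T))=\indmat(L(T))$, so it is enough to prove $\indmat(L(T))=c$, where $c$ is the number of cutpoints of $T$. First I would record the dictionary asserted in the statement: if $v$ is a vertex of $T$ with $\deg_T v\ge 2$ (equivalently, a cutpoint), the set of edges of $T$ incident to $v$ is a clique of $L(T)$; because $T$ has no triangle this clique is maximal and is the block of $L(T)$ through it, and conversely every maximal clique of $L(T)$ is of this form and distinct cutpoints give distinct ones, by the block-graph description of line graphs of trees recalled above. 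Hence the number of cliques of $L(T)$ equals $c$, and the task is to show $\indmat(L(T))=c$.

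For the inequality $\indmat(L(T))\le c$ I would show that $V(L(T))$ decomposes into $c$ cliques and then invoke Proposition~\ref{regcliques} with $J_0=\emptyset$ and $s=c$. Root $T$ at a cutpoint and assign to each edge of $T$ the block of $L(T)$ determined by its endpoint nearer the root; that endpoint has degree $\ge 2$, hence is a cutpoint, so the assignment is well defined, its fibres are cliques of $L(T)$, and there are at most $c$ of them. Thus $\reg R/I(L(T))\le c$, i.e. $\indmat(L(T))\le c$. (Equivalently, one may observe directly that the $c$ blocks of $L(T)$ are clique-neighborhoods covering $E(L(T))$ and apply Theorem~\ref{ind-CN}; the case $T=P_2$, with $c=0$, is trivial.)

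The reverse inequality $\indmat(L(T))\ge c$ is the substantive point, and it is here that the hypothesis on the degrees of the cutpoints enters. I would proceed by induction on the number of vertices of $T$, peeling the caterpillar off from one end: the base case is a star, where $L(T)$ is a clique and $\indmat=1=c$. In the inductive step, take an endpoint $v$ of the backbone; the degree restriction forces $\deg_T v\in\{3,4\}$ and controls $\mathcal{N}_T(v)$, so the earlier Propositions describing how $\indmat L(T)$ changes under deletion of a free vertex adjacent to $v$ apply, and successively removing the pendant edges at $v$ retires the block of $v$ while dropping $\indmat(L(T))$ by exactly one, in step with the drop of $c$ by one. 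Iterating gives $\indmat(L(T))=c$. A more transparent alternative is to build an induced matching of $L(T)$ of size $c$ outright: at each cutpoint $v$ of $T$ choose the edge of $L(T)$ spanned by two pendant edges of $T$ incident to $v$ (available precisely because of the degree assumption on the cutpoints); a pendant edge of $T$ is adjacent in $L(T)$ only to edges sharing its cutpoint endpoint, so edges chosen at different cutpoints are non-adjacent in $L(T)$ and are pairwise disjoint, whence these $c$ edges form an induced matching and $\indmat(L(T))\ge c$.

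The step I expect to be the main obstacle is exactly this lower bound. In the inductive route one must check at every stage that the configuration at the chosen backbone endpoint satisfies the hypotheses of the deletion Propositions — it is the bound "$\deg\le 4$", i.e. bounded clique sizes in $L(T)$, that makes the removal of a single pendant edge alter $\indmat$ in a controlled way — and one must manage the bookkeeping as a cutpoint's degree drops along the way. In the direct-construction route the obstacle is pinning down which caterpillars with cutpoint-degrees $\le 4$ do carry two pendant edges at every cutpoint and verifying the induced-matching conditions there. The remaining ingredients — chordality of $L(T)$, the clique partition behind the upper bound, and the correspondence between cutpoints of $T$ and maximal cliques of $L(T)$ — are routine consequences of the results collected in the preliminaries.
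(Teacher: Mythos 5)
Your ``more transparent alternative'' for the lower bound --- the induced matching of $L(T)$ obtained by pairing two pendant edges of $T$ at each cutpoint --- combined with the upper bound from Proposition~\ref{regcliques} applied to a partition of $V(L(T))$ into one clique per cutpoint, is exactly the paper's argument, so your proposal follows essentially the same route (the inductive peeling you offer first is not needed). The obstacle you flag at the end is real but is shared by the paper's own proof: both arguments silently require every cutpoint of $T$ to carry at least two pendant edges, which is the reading of the degree hypothesis under which the statement holds.
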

\begin{proof}One may note that an induced matching of $L(T)$ is obtained by taking the edges induced by two free vertices which are neighbors of the same cutpoint. More precisely, if we consider the following caterpillar graph,
	\begin{center}
		\begin{figure}[h]
			\includegraphics[height=2.5cm]{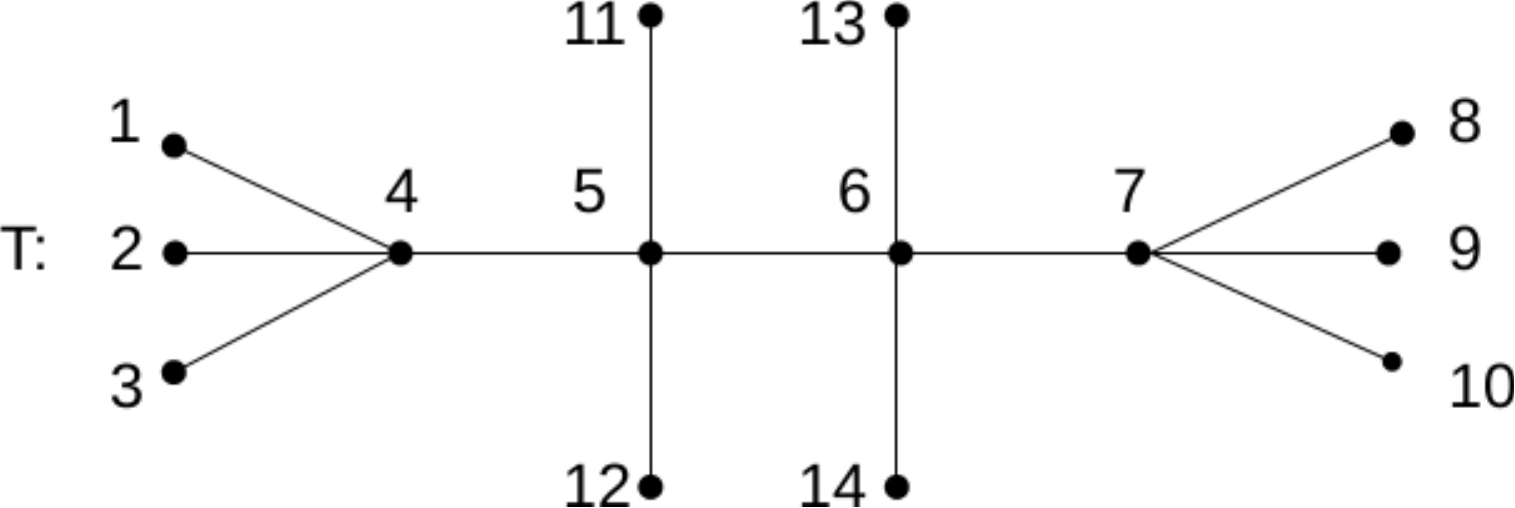}
		\end{figure}
	\end{center} 
	then the corresponding line graph is
	\begin{center}
		\begin{figure}[h]
			\includegraphics[height=3cm]{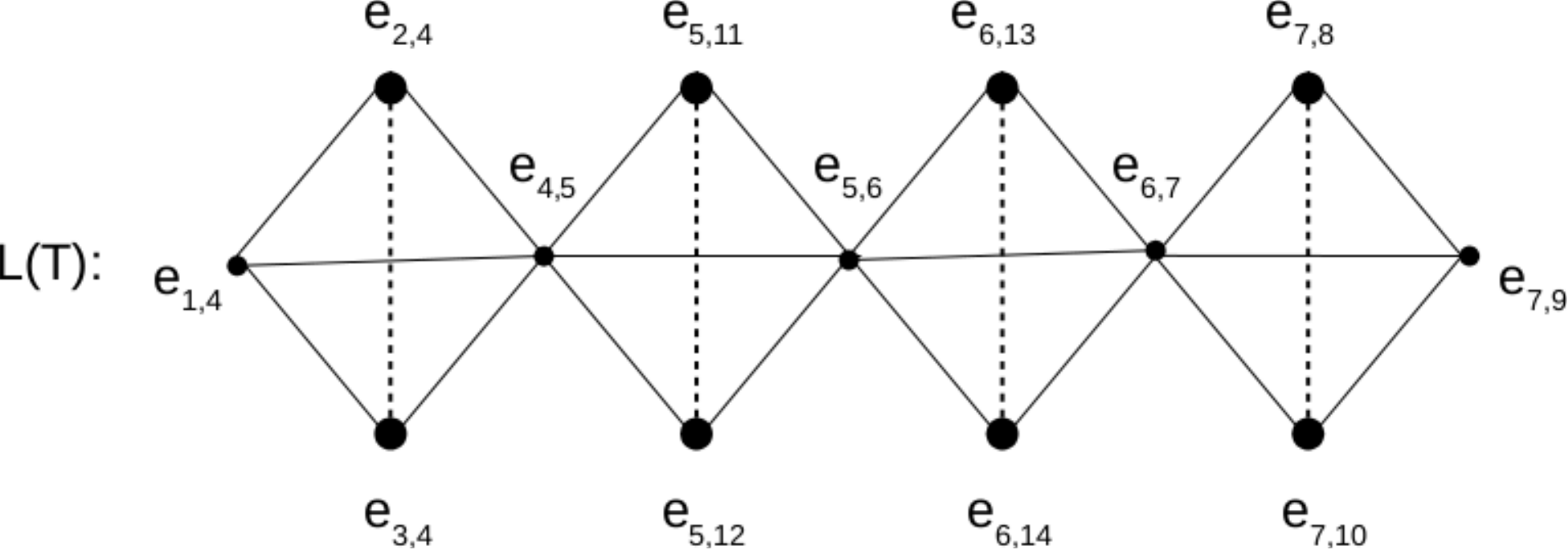}
		\end{figure}
	\end{center}
	and an induced matching is given by the dashed edges. Therefore, if $s$ is the number of cliques from $L(T)$, then $\reg R(I(L(T)))=\indmat L(T)\geq s$. According to Proposition \ref{regcliques}, one also obtain the converse inequality since one may consider as an induced independent set $J_0$ the set obtained by taking one vertex from each maximal clique from $L(T)$, that is by considering only one pendant edge for each cutpoint. Due to the restriction on the degree of cutpoints, the remaining cliques are of size at least $3$. 
\end{proof}
In order to compute the projective dimension, we will apply the results developped in \cite{Ki}. We follow \cite{Ki} in order to fix the notations.

A graph $B$ is called \textit{a bouquet} if $B$ is a star graph with $V(B)=\{w,z_1,\ldots,z_t\}$, $t\geq 1$, and the set of edges $E(B)=\{\{w,z_i\}:1\leq i\leq t\}$. The vertex $w$ is called \textit{the root of} $B$, the vertices $z_1,\ldots,z_t$ are called \textit{the flowers of} $B$ and the edges of the graph $B$ are called \textit{stems}. We denote by $F(B)$ the set of flowers of $B$. Let $G$ be a graph with the vertex set $V(G)$, $E(G)$ be its set of edges, and $\mathcal{B}=\{B_1,\ldots,B_r\}$ a set of bouquets of $G$. Then \[\mathcal{F}(\mathcal{B})=\{z\in V(G):z\mbox{ is a flower in some bouquet from }\mathcal{B}\},\]
\[\mathcal{R}(\mathcal{B})=\{w\in V(G): \mbox{ is a root in some bouquet from }\mathcal{B}\}.\]

A set $\mathcal{B}$ of bouquets of $G$ is called \textit{semi-strongly disjoint} if $V(B_i)\cap V (B_j) = \emptyset$ for all $i\neq j$ and any two vertices belonging to $\mathcal{R}(\mathcal{B})$ are not adjacent in $G$.

Let $d'_G:=\max\{|\mathcal{F}(\mathcal{B})|: \mathcal{B} \mbox{ is a semi-strongly disjoint set of bouquets of } G\}$.

For the case of chordal graphs, the projective dimension of the edge ideal can be computed in terms of semi-strongly disjoint sets. More precisely:
\begin{Theorem}\cite[Theorem 5.1]{K}\label{pdd'}
	Let $G$ be a chordal graph. Then $$\projdim S/I(G)=d'_G.$$
\end{Theorem}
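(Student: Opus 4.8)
The plan is to prove $\projdim S/I(G)=d'_G$ by two matching inequalities; throughout I write $\projdim G$ for $\projdim S/I(G)$, with $S$ the polynomial ring on $V(G)$, and I use freely Hochster's formula, which in particular yields $\projdim H\le\projdim G$ whenever $H$ is an induced subgraph of $G$. The inequality $\projdim G\ge d'_G$ holds for an arbitrary finite simple graph and rests on the non-vanishing of a suitable Betti number; the reverse inequality is where chordality is essential.

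\emph{Lower bound.} Fix a semi-strongly disjoint set of bouquets $\mathcal{B}=\{B_1,\dots,B_s\}$ with $p:=|\mathcal{F}(\mathcal{B})|$ flowers in total, and put $W=V(B_1)\cup\cdots\cup V(B_s)$, so $|W|=p+s$. Since $\projdim G\ge\projdim G[W]$, it is enough to produce a nonzero Betti number of $S/I(G[W])$ in homological degree $p$. Writing $\Delta=\Ind(G[W])$, Hochster's formula gives $\beta_{p,W}(S/I(G[W]))=\dim_k\widetilde{H}_{s-1}(\Delta;k)$, so the task becomes to show this reduced homology does not vanish. Here the two defining features of a semi-strongly disjoint set enter: the bouquets are vertex-disjoint and the roots $w_1,\dots,w_s$ are pairwise non-adjacent, so the roots span an $(s-1)$-face of $\Delta$, and collapsing, for each $i$, the contractible flower simplex of $B_i$ onto the isolated point $w_i$ identifies $\Delta$ up to homotopy with a join of $s$ spaces each of the homotopy type of $S^0$ (recall $\Ind(K_{1,t})$ is the full simplex on the flowers together with the isolated root); the $(s-1)$-st reduced homology of such a join is one-dimensional by the K\"unneth formula for joins. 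Making this collapsing rigorous in the presence of possible extra edges of $G[W]$ among flowers of distinct bouquets is the delicate point; alternatively one may quote Kimura's non-vanishing estimate for bouquets whose stems can be chosen to form an induced matching, after checking that such a choice is available here. Either way, $\projdim G\ge p$, and hence $\projdim G\ge d'_G$.

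\emph{Upper bound.} Assume $G$ is chordal and induct on $|V(G)|$, the base case being trivial. Pick a simplicial vertex $v$, so $N_G[v]$ is a clique. From the short exact sequence $0\to\bigl(S/(I(G):x_v)\bigr)(-1)\to S/I(G)\to S/(I(G),x_v)\to 0$, the first map being multiplication by $x_v$, together with the identities $(I(G):x_v)=I(G\setminus N[v])+(x_u:u\in N(v))$ and $(I(G),x_v)=I(G\setminus v)+(x_v)$, one gets
\[
\projdim G\ \le\ \max\bigl\{\,\deg_G(v)+\projdim(G\setminus N[v]),\ \ 1+\projdim(G\setminus v)\,\bigr\}.
\]
Both $G\setminus N[v]$ and $G\setminus v$ are chordal, so the inductive hypothesis turns the right-hand side into $\max\{\deg_G(v)+d'_{G\setminus N[v]},\,1+d'_{G\setminus v}\}$, and it remains to verify the two combinatorial inequalities $d'_G\ge\deg_G(v)+d'_{G\setminus N[v]}$ and $d'_G\ge 1+d'_{G\setminus v}$. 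The first is immediate: adjoin to an optimal semi-strongly disjoint set of $G\setminus N[v]$ the single bouquet with root $v$ and flower set $N_G(v)$; vertex-disjointness is clear and $v$ is non-adjacent to every old root, as those lie in $V(G)\setminus N[v]$.

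\emph{The main obstacle} is the inequality $d'_G\ge 1+d'_{G\setminus v}$: one must enlarge an optimal semi-strongly disjoint set of $G\setminus v$ by one more flower involving $v$ while keeping the roots pairwise non-adjacent, and this genuinely fails when $v$ is not simplicial (already for $G=C_4$). The expected resolution uses the clique $N_G[v]$ to reorganise the bouquet configuration on $G\setminus v$ so that some neighbour of $v$ becomes a root, after which $v$ is attached as a new flower of that bouquet; in general one argues along a perfect elimination ordering of $G$ that such a reorganisation is always possible without creating adjacent roots, which amounts to constructing directly, from a perfect elimination ordering, a semi-strongly disjoint set of bouquets of $G$ with exactly $\projdim G$ flowers. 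Once this point is settled, the two inequalities combine with the lower bound to give $\projdim S/I(G)=d'_G$.
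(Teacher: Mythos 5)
This statement is quoted from Kimura's paper (the citation key is a slip for \cite{Ki}); the present paper gives no proof of it, so your attempt has to stand on its own, and it does not: both halves stall exactly at the points you yourself flag, and the upper bound stalls irreparably. The decisive step of your induction is the inequality $d'_G\ge 1+d'_{G\setminus v}$ for a simplicial vertex $v$ of a chordal graph, and this is false. Take $G=P_4$ with vertices $a,b,c,d$ and $v=a$ (a leaf, hence simplicial): a semi-strongly disjoint family in $P_4$ can carry at most two flowers (three flowers would require either a vertex of degree $3$ or two disjoint bouquets with at least five vertices), so $d'_{P_4}=2$, whereas $G\setminus a=P_3$ has $d'_{P_3}=2$ by rooting at the middle vertex, giving $1+d'_{G\setminus v}=3>d'_G$. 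Consistently, your exact-sequence estimate only yields $\projdim S/I(P_4)\le\max\{2,3\}=3$ while the true value is $2$: this is precisely the degenerate case $\projdim S/(I,x_v)=\projdim S/(I\colon x_v)+1$ in which the maximum in the short-exact-sequence bound is not attained. No reorganisation along a perfect elimination ordering can rescue an inequality that fails numerically, so the inductive scheme itself, not just its execution, has to be replaced.

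The lower bound is also not established as written. The identification of $\Ind(G[W])$ with a join of $s$ copies of $S^0$ uses that $G[W]$ contains no edges beyond the stems, but semi-strong disjointness only forbids edges between the roots: flowers of distinct bouquets, and a root and a flower of a different bouquet, may well be adjacent, and then $\widetilde{H}_{s-1}$ of the restricted independence complex can vanish (for instance two bouquets with flower sets $\{z_1,z_2\}$ and $\{z_3\}$ where $z_3$ is adjacent to $z_1,z_2$ and to the first root give a complex homotopy equivalent to $S^0$, so the multidegree $W$ supports no Betti number in homological degree $|\mathcal{F}(\mathcal{B})|=3$). Your fallback --- invoking the nonvanishing theorem for bouquets whose stems form an induced matching --- requires proving that in a chordal graph every semi-strongly disjoint family can be traded for a strongly disjoint one with at least as many flowers; that is a genuine lemma, not a checking step. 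The workable route, and the one consistent with what this paper actually records, is to use that chordal graphs are sequentially Cohen--Macaulay, so $\projdim S/I(G)=\bight I(G)$ (Proposition \ref{pdbight}), and then to prove the purely combinatorial identity $\bight I(G)=d'_G$ by converting a largest minimal vertex cover into a semi-strongly disjoint set of bouquets and conversely; your proposal contains neither of these ingredients.
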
 

\begin{Proposition}\label{pdcat}
	Let $T$ be a caterpillar tree on $n$ vertices and $L(T)$ its line graph. Let $v_1,\ldots,v_r$ be the cutpoints from the backbone. Then $$\projdim(R/I(L(T)))=n-1-\left[\frac{r+1}{2}\right].$$ In particular, $$\depth R/I(L(T))=\left[\frac{r+1}{2}\right].$$
\end{Proposition}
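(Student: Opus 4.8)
The plan is to compute $\projdim(R/I(L(T)))$ via Theorem~\ref{pdd'}, since $L(T)$ is chordal; that is, we must determine $d'_{L(T)}=\max\{|\mathcal{F}(\mathcal{B})|\}$ over semi-strongly disjoint sets of bouquets $\mathcal{B}$ of $L(T)$. The key structural fact I would use is the description of $L(T)$ as a block graph whose maximal cliques correspond to the vertices of $T$ of degree $\geq 2$: a cutpoint $v$ of $T$ of degree $d$ contributes a clique $K_v$ on $d$ vertices (the edges of $T$ incident to $v$), and two such cliques $K_v$, $K_w$ share a vertex exactly when $\{v,w\}\in E(T)$ — in which case $v,w$ are both cutpoints and the shared vertex is the edge $e_{vw}$. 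Thus the ``clique-adjacency graph'' of $L(T)$ is precisely the induced subtree $T_0$ of $T$ on the cutpoints, and the cutpoints of $L(T)$ are exactly the vertices $e_{vw}$ with $v,w$ both cutpoints, i.e.\ the edges of $T_0$; for a caterpillar, $T_0$ is the path $v_1,\dots,v_r$ on the backbone cutpoints, so $L(T)$ has $r$ maximal cliques glued in a path along $r-1$ cut-vertices.

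First I would bound $d'_{L(T)}$ from below by exhibiting an explicit semi-strongly disjoint set of bouquets. Total vertex count of $L(T)$ is $|E(T)|=n-1$. The idea is to place one bouquet inside each clique $K_{v_i}$, choosing the roots so that no two roots are adjacent in $L(T)$: since the cliques are strung in a path, choose as roots the cliques with odd index $i$, so roots lie in non-consecutive cliques and hence are non-adjacent; then within each odd-indexed clique the root is one vertex and all its remaining vertices become flowers, and the even-indexed cliques contribute no bouquet. Counting flowers, each vertex of $L(T)$ that is not a root of a chosen bouquet and not left out can be taken as a flower; a careful accounting — roots cost $\lceil (r+1)/2\rceil$ vertices total (one per odd-indexed clique, of which there are $\lceil r/2\rceil$, but we must also be careful about shared cut-vertices and about isolated pendant vertices of $L(T)$) — gives $|\mathcal{F}(\mathcal{B})|= (n-1) - [\frac{r+1}{2}]$. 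I would make this precise by first handling the pendant edges of $L(T)$ (hairs of $T$ attached to endpoint cutpoints) separately, then doing the alternating choice along the path of $r$ cliques, verifying semi-strong disjointness directly from the block structure.

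Next I would prove the matching upper bound $d'_{L(T)}\leq (n-1)-[\frac{r+1}{2}]$. Given any semi-strongly disjoint $\mathcal{B}$, the roots $\mathcal{R}(\mathcal{B})$ form an independent set of $L(T)$, and no vertex of $L(T)$ is both a root and a flower, nor is it a flower of two bouquets; so $|\mathcal{F}(\mathcal{B})|\leq (n-1)-|\mathcal{R}(\mathcal{B})| - (\text{vertices that are neither})$, and to maximize flowers we want $|\mathcal{R}(\mathcal{B})|$ small while still ``touching'' every bouquet. The crux is a lower bound on $|\mathcal{R}(\mathcal{B})|$: because the $r$ maximal cliques are arranged in a path and an independent set of $L(T)$ meets any single clique in at most one vertex, while a bouquet with all its flowers in clique $K_{v_i}$ is most efficient, an optimal configuration uses roots in an independent set of cliques, and an independent set in the path on $r$ cliques that still allows every vertex outside it to be a flower forces at least $[\frac{r+1}{2}]$ roots. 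I would argue this by an exchange/compression argument: move each bouquet so that its root lies in the ``cheapest'' position, collapse bouquets in the same clique, and reduce to counting a maximum independent set packing along a path — a routine discrete optimization once the reduction is set up.

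The main obstacle I anticipate is the bookkeeping in the upper bound: a bouquet's flowers need not all lie in one maximal clique, and a single vertex $e_{vw}$ sits in two cliques, so one has to rule out ``clever'' bouquets that span several cliques and use fewer roots than the naive count suggests. The resolution is that in a block graph, the neighborhood of any vertex $w$ is contained in the union of the (at most two) blocks through $w$, and a stem $\{w,z\}$ forces $z$ into one of those blocks; combined with the non-adjacency of roots this pins down the efficient configurations. Once that is nailed, matching the two bounds yields $\projdim(R/I(L(T)))=n-1-[\frac{r+1}{2}]$, and the depth statement follows from the Auslander--Buchsbaum formula $\depth R/I(L(T)) = (n-1) - \projdim(R/I(L(T)))$, since $R$ is a polynomial ring in $n-1$ variables.
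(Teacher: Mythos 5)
Your overall strategy coincides with the paper's: both proofs invoke Kimura's formula $\projdim S/I(G)=d'_G$ for chordal graphs (Theorem \ref{pdd'}), exhibit an explicit semi-strongly disjoint set of bouquets realizing the lower bound $d'_{L(T)}\geq n-1-\left[\frac{r+1}{2}\right]$, argue that no larger flower set is possible, and deduce the depth statement from Auslander--Buchsbaum. The difference lies in the construction itself, and there your version has a genuine gap.

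As you describe it, you place one bouquet in each odd-indexed clique $K_{v_{2i-1}}$, take its flowers to be the remaining vertices of that clique, and let the even-indexed cliques contribute no bouquet. But a vertex counts toward $|\mathcal{F}(\mathcal{B})|$ only if it is adjacent to some root, and a hair of $T$ attached to an even-indexed cutpoint $v_{2j}$ (an edge $e_{uv_{2j}}$ with $u$ free) is a vertex of $L(T)$ whose entire neighborhood lies in $K_{v_{2j}}$. With no root in $K_{v_{2j}}$, such a vertex is neither a root nor a flower, so whenever an even-indexed cutpoint has degree at least $3$ your configuration yields strictly fewer than $n-1-\left[\frac{r+1}{2}\right]$ flowers. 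Concretely, for the caterpillar with backbone $a-v_1-v_2-v_3-b$ and one extra hair $c-v_2$ one has $n=6$, $r=3$, so the target is $3$, but your construction strands $e_{v_2c}$ and produces only $2$ flowers. The repair is exactly the paper's choice: root the $i$-th bouquet at the backbone edge $e_{v_{2i-1}v_{2i}}$, i.e.\ at the cut-vertex of $L(T)$ shared by $K_{v_{2i-1}}$ and $K_{v_{2i}}$, whose neighborhood is $\bigl(K_{v_{2i-1}}\cup K_{v_{2i}}\bigr)\setminus\{e_{v_{2i-1}v_{2i}}\}$. Each bouquet then sweeps up two consecutive cliques, the roots are pairwise non-adjacent, and after deleting the overlap vertices $e_{v_{2i}v_{2i+1}}$ from all but one bouquet every vertex of $L(T)$ is a root or a flower, giving $|\mathcal{F}(\mathcal{B})|=(n-1)-\left[\frac{r+1}{2}\right]$ with $\left[\frac{r+1}{2}\right]$ roots. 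Your sketch of the upper bound (roots form an independent set; reduce to a packing problem along the path of blocks) is reasonable and in fact more explicit than the paper's rather terse maximality argument, but it too must be run against the corrected picture in which an efficient bouquet spans two consecutive blocks through a shared cut-vertex rather than sitting inside a single clique.
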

\begin{proof} We use Theorem \ref{pdd'} in order to compute the projective dimension of $I(L(T))$. We split the proof in two cases:
	
	\textit{Case 1:} Let's assume that $r$ is even. We construct a set of bouquets $\mathcal{B}=\{B_1,\ldots,B_N\}$ with the set of roots
	\[\mathcal{R}(\mathcal{B})=\{e_{v_1v_2},e_{v_3v_4},\ldots,e_{v_{r-1}v_{r}}\}\] and the set of flowers \[\mathcal{F}(\mathcal{B})=\{\mathcal{F}(B_1),\ldots,\mathcal{F}(B_N)\},\] where $N=\left[\frac{r}{2}\right]=\left[\frac{r+1}{2}\right]$ and \[\mathcal{F}(B_1)=\mathcal{N}_{L(T)}\left(e_{v_1v_2}\right),\]
	\[\mathcal{F}(B_2)=\mathcal{N}_{L(T)}\left(e_{v_3v_4}\right)\setminus\{e_{v_2v_3}\},\]\[\ldots\]\[\mathcal{F}(B_i)=\mathcal{N}_{L(T)}\left(e_{v_{2i+1}v_{2i+2}}\right)\setminus\{e_{v_{2i}v_{2i+1}}\}\]\[\ldots\]
	Note that $$\mathcal{R}(\mathcal{B})\cup\mathcal{F}(\mathcal{B})=V(L(T)).$$ Therefore we get that $|\mathcal{R}(\mathcal{B})|=\left[\frac{r+1}{2}\right]$ and $$|\mathcal{F}(\mathcal{B})|=|V(L(T))|-\left[\frac{r+1}{2}\right]=n-1-\left[\frac{r+1}{2}\right].$$ We used here the fact that $|V(L(T))|=|E(T)|=n-1$ since $T$ is a tree on $n$ vertices. Therefore, $$d'_{L(T)}\geq n-1-\left[\frac{r+1}{2}\right].$$

	\textit{Case 2:} We assume now that $r$ is odd. We construct a set of bouquets $\mathcal{B}=\{B_1,\ldots,B_N\}$ with the set of roots
	\[\mathcal{R}(\mathcal{B})=\{e_{v_1v_2},e_{v_3v_4},\ldots,e_{v_{r}v_{r+1}}\}\] and the set of flowers \[\mathcal{F}(\mathcal{B})=\{\mathcal{F}(B_1),\ldots,\mathcal{F}(B_N)\},\] where $N=\left[\frac{r+1}{2}\right]$ and \[\mathcal{F}(B_1)=\mathcal{N}_{L(T)}\left(e_{v_1v_2}\right),\]
	\[\mathcal{F}(B_2)=\mathcal{N}_{L(T)}\left(e_{v_3v_4}\right)\setminus\{e_{v_2v_3}\},\]\[\ldots\]\[\mathcal{F}(B_i)=\mathcal{N}_{L(T)}\left(e_{v_{2i+1}v_{2i+2}}\right)\setminus\{e_{v_{2i}v_{2i+1}}\}\]\[\ldots\]
	Note that $$\mathcal{R}(\mathcal{B})\cup\mathcal{F}(\mathcal{B})=V(L(T)).$$ Therefore we get that $|\mathcal{R}(\mathcal{B})|=\left[\frac{r+1}{2}\right]$ and $$|\mathcal{F}(\mathcal{B})|=|V(L(T))|-\left[\frac{r+1}{2}\right]=n-1-\left[\frac{r+1}{2}\right].$$  As before, we obtain that $$d'_{L(T)}\geq n-1-\left[\frac{r+1}{2}\right].$$
	 One may easy note that, in both cases, the two sets of bouquets that we constructed contain the maximal number of flowers since a larger set of flowers will be given by considering less cutpoints. But due to the restriction of disjoint set of flowers, this will lead to a smaller set (in the set we considered, there are involved all the vertices which come from the free vertices and all the connecting edges). Thus, by Theorem \ref{pdd'}, $$\projdim R/I(L(T)) =d'_{L(T)}= n-1-\left[\frac{r+1}{2}\right]$$and $$\depth R/I(L(T))=\left[\frac{r+1}{2}\right].$$
\end{proof}
The following result allows us to determine the size of the largest vertex cover of the line graph of a caterpillar graph.
\begin{Proposition}\cite{FT,HHZ,Ki,MV}\label{pdbight} Let $G$ be a chordal graph. Then $$\projdim S/I(G)=\bight I(G).$$In particular, if $I(G)$ is unmixed, then $S/I(G)$ is Cohen--Macaulay.
\end{Proposition}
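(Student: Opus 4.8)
The plan is to prove the equality $\projdim S/I(G)=\bight I(G)$ as two opposite inequalities, only the second of which uses chordality, and then read off the Cohen--Macaulay statement. First recall the combinatorial translation: $I(G)$ is a squarefree monomial ideal, hence radical, so $\Ass(S/I(G))=\Min(S/I(G))$, and these primes are precisely the monomial primes $\pp_C=(x_i:i\in C)$ for $C$ a minimal vertex cover of $G$, with $\height\pp_C=|C|$; thus $\bight I(G)$ is the maximal size of a minimal vertex cover of $G$ (equivalently, by Proposition~\ref{dimindep}, $n$ minus the minimal size of a maximal independent set). The inequality $\projdim S/I(G)\ge\bight I(G)$ holds for \emph{any} graded ideal and I would obtain it by localization: pick $\pp\in\Ass(S/I(G))$ with $\height\pp=\bight I(G)$; then $\projdim_{S_\pp}(S/I(G))_\pp\le\projdim_S S/I(G)$, while $\pp S_\pp$ is associated to $(S/I(G))_\pp$, so $\depth_{S_\pp}(S/I(G))_\pp=0$ and Auslander--Buchsbaum over the regular local ring $S_\pp$ gives $\projdim_{S_\pp}(S/I(G))_\pp=\height\pp=\bight I(G)$.

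The substantive half is $\projdim S/I(G)\le\bight I(G)$ for chordal $G$. Here I would invoke the theorem of Francisco and Van Tuyl \cite{FT} that the edge ideal of a chordal graph is sequentially Cohen--Macaulay, combined with the general principle that a sequentially Cohen--Macaulay graded $S$-module $M$ satisfies $\depth M=\min\{\dim S/\pp:\pp\in\Ass M\}$. For the latter: take the dimension filtration $0=M_0\subsetneq M_1\subsetneq\cdots\subsetneq M_t=M$ with each $M_j/M_{j-1}$ Cohen--Macaulay of dimension $d_j$ and $d_1<\cdots<d_t$; then $M_1$ is Cohen--Macaulay of dimension $d_1=\min\{\dim S/\pp:\pp\in\Ass M\}$, so $\depth M_1=d_1$, and an induction on $j$ applying the depth lemma to $0\to M_{j-1}\to M_j\to M_j/M_{j-1}\to 0$ (where $\depth M_{j-1}=d_1<d_j=\depth(M_j/M_{j-1})$, together with $\Ass M_1\subseteq\Ass M_j$) forces $\depth M_j=d_1$ for all $j$, hence $\depth M=d_1$. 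Applying this with $M=S/I(G)$ and using Auslander--Buchsbaum together with $\height\pp+\dim S/\pp=\dim S$ in the polynomial ring yields $\projdim S/I(G)=\dim S-d_1=\max\{\height\pp:\pp\in\Ass(S/I(G))\}=\bight I(G)$.

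The Cohen--Macaulay assertion is then immediate: if $I(G)$ is unmixed all associated primes share one height, so $\bight I(G)=\height I(G)$, whence $\projdim S/I(G)=\height I(G)=\dim S-\dim S/I(G)$, and one more use of Auslander--Buchsbaum gives $\depth S/I(G)=\dim S/I(G)$. An alternative to the middle paragraph would route everything through Theorem~\ref{pdd'}, reducing the claim to the combinatorial identity $d'_G=\bight I(G)$ for chordal $G$: the inequality $d'_G\ge\bight I(G)$ is easy, since if $C$ is a largest minimal vertex cover then $W=V(G)\setminus C$ is a maximal, hence dominating, independent set, and assigning each $c\in C$ to a neighbour of $c$ in $W$ exhibits a semi-strongly disjoint family of bouquets (roots inside the independent set $W$, flower sets partitioning $C$) with exactly $|C|$ flowers; the opposite inequality is precisely where chordality is indispensable.

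That last point is the genuine obstacle. A naive induction on the number of vertices using a simplicial vertex $v$ and the short exact sequence $0\to (S/(I(G):x_v))(-1)\to S/I(G)\to S/(I(G),x_v)\to 0$ only delivers $\depth S/I(G)\ge\min\{1+\depth S/I(G\setminus N[v]),\,\depth S/I(G\setminus v)\}$, and this bound is not sharp --- it already fails to detect $\depth S/I(P_4)=2$ --- so one genuinely needs the full sequentially Cohen--Macaulay filtration rather than a single application of the depth lemma; securing that input (the Francisco--Van Tuyl theorem, itself resting on shellability/vertex-decomposability of the relevant simplicial complex, cf.\ also \cite{HHZ}) is where the real work resides, while the remaining steps above are formal.
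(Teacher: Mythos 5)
The paper states this proposition as a known result, with citations to \cite{FT,HHZ,Ki,MV} and no proof, so there is nothing internal to compare against; judged on its own merits, your argument is correct and is essentially the standard derivation found in those references. The lower bound $\projdim S/I(G)\ge\bight I(G)$ by localizing at an associated prime of maximal height and applying Auslander--Buchsbaum over $S_\pp$ is valid for any graded ideal. The upper bound correctly reduces to the Francisco--Van Tuyl theorem that edge ideals of chordal graphs are sequentially Cohen--Macaulay, together with the fact that a sequentially Cohen--Macaulay module satisfies $\depth M=\min\{\dim S/\pp:\pp\in\Ass M\}$; your depth-lemma induction along the dimension filtration is sound (the quickest way to cap $\depth M_j$ from above is simply $\depth M_j\le\dim S/\pp$ for a $\pp\in\Ass M_1\subseteq\Ass M_j$ of dimension $d_1$, which your parenthetical already gestures at). The translation $\bight I(G)=\max\{|C|: C\mbox{ a minimal vertex cover}\}$ and the unmixed/Cohen--Macaulay consequence then follow formally.

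One correction to your closing aside: in the alternative route through Theorem~\ref{pdd'}, the inequality $d'_G\le\bight I(G)$ does \emph{not} require chordality. For any semi-strongly disjoint set of bouquets $\mathcal{B}$, the roots form an independent set; extend it to a maximal independent set $W$. Every flower is adjacent to its root, which lies in $W$, so no flower lies in $W$; hence $\mathcal{F}(\mathcal{B})\subseteq V(G)\setminus W$, which is a minimal vertex cover, giving $d'_G\le\bight I(G)$ for every graph. In that route the chordality hypothesis is consumed entirely by Kimura's equality $\projdim S/I(G)=d'_G$ (the inequality $\projdim S/I(G)\ge d'_G$ holds in general; the reverse is what needs $G$ chordal). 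This misattribution sits in an optional aside and does not affect your main proof, which stands.
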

\begin{Proposition} Let $T$ be a caterpillar tree on $n$ vertices and $L(T)$ its line graph. Let $v_1,\ldots,v_r$ be the cutpoints of $T$. Then the largest maximal vertex cover is of size $n-1-\left[\frac{r+1}{2}\right]$.
\end{Proposition}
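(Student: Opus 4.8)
The plan is to obtain this as a direct corollary of Proposition~\ref{pdcat} and Proposition~\ref{pdbight}, once the big height is translated into the language of vertex covers. First I would recall that, since $T$ is a tree, its line graph $L(T)$ is chordal, so Proposition~\ref{pdbight} applies and yields
\[
\projdim R/I(L(T)) = \bight I(L(T)).
\]

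Next I would invoke the standard primary decomposition of an edge ideal: the minimal primes of $I(L(T))$ are exactly the monomial primes $\mathfrak{p}_C = (e_{uv} : e_{uv}\in C)$, where $C$ ranges over the minimal vertex covers of $L(T)$, and $\height \mathfrak{p}_C = |C|$. Hence $\bight I(L(T))$ equals the maximum cardinality of a minimal vertex cover of $L(T)$, i.e.\ the size of the largest (minimal) vertex cover. At this point one should also observe that for a caterpillar tree the cutpoints of $T$ are precisely the vertices lying on its backbone, so the integer $r$ in the statement has the same meaning as the $r$ in Proposition~\ref{pdcat}.

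Finally, combining the displayed equality with Proposition~\ref{pdcat}, which gives $\projdim R/I(L(T)) = n-1-\left[\frac{r+1}{2}\right]$, we conclude that the largest minimal vertex cover of $L(T)$ has size $n-1-\left[\frac{r+1}{2}\right]$. Since every ingredient is already established in the paper, there is no genuine obstacle here; the only step that requires a word of care is the dictionary passing from the homological invariant $\bight I(L(T))$ to the purely combinatorial ``largest vertex cover'' of $L(T)$, together with the bookkeeping remark that the cutpoints of a caterpillar coincide with its backbone vertices.
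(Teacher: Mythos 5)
Your argument is correct and follows essentially the same route as the paper: both identify $\bight I(L(T))$ with the size of the largest minimal vertex cover via the standard primary decomposition of edge ideals, invoke Proposition~\ref{pdbight} for the chordal graph $L(T)$, and then plug in the projective dimension computed in Proposition~\ref{pdcat}. Your extra remark reconciling the cutpoints of $T$ with the backbone vertices used in Proposition~\ref{pdcat} is a sensible piece of bookkeeping that the paper leaves implicit.
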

\begin{proof} The proof follows easily since $L(T)$ is a chordal graph and, by Proposition \ref{pdbight}, $\projdim R/I(L(T))=\bight (I(L(T)))$. One has to note that  $\bight (I(L(T)))$ gives the size of the largest maximal vertex cover of $L(T)$.
\end{proof}
In \cite{OV}, the set of maximal independent sets in caterpillar graphs is studied. We may determine the maximal size of a maximal independent set in the additional assumption that each cutpoint has degree at least $3$:

\begin{Proposition}
	Let $T$ be a caterpillar graph such that the degree of any cutpoint is at least three. Then $\dim R/I(L(T))=s$, where $s$ is the number of cutpoints in $T$ or the number of maximal cliques in $L(T)$.
\end{Proposition}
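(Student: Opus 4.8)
The plan is to reduce the computation of the Krull dimension to a purely combinatorial invariant of $T$. By Proposition~\ref{dimindep}, $\dim R/I(L(T))$ is the maximal cardinality of an independent set of $L(T)$. A set of vertices of $L(T)$ is independent exactly when the corresponding edges of $T$ are pairwise non-adjacent, i.e. form a matching of $T$; hence $\dim R/I(L(T))$ equals the matching number $\nu(T)$ (the maximum size of a matching of $T$), and it suffices to show $\nu(T)=s$.

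For the inequality $\nu(T)\leq s$, I would use the structure of a caterpillar: every edge of $T$ is either a backbone edge $\{v_i,v_{i+1}\}$ or a hair $\{v_i,\ell\}$ attached to a backbone vertex, so every edge of $T$ is incident to one of the $s$ cutpoints $v_1,\dots,v_s$. Given a matching $M$, pick for each $e\in M$ a cutpoint incident to $e$; two distinct edges of $M$ cannot be assigned the same cutpoint, since then they would share that vertex. So the assignment is injective and $|M|\leq s$.

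For the reverse inequality I would build a matching of size $s$, and this is exactly where the hypothesis enters: an interior backbone vertex is already adjacent to two backbone vertices and an endpoint to one, so $\deg_T v_i\geq 3$ forces each cutpoint $v_i$ to be incident to at least one hair $h_i=\{v_i,\ell_i\}$. The edges $h_1,\dots,h_s$ are pairwise disjoint (the $v_i$ are distinct cutpoints and the $\ell_i$ are distinct leaves), hence they form a matching of size $s$. Thus $\nu(T)=s$. Finally $s$ is also the number of maximal cliques of $L(T)$, since in the line graph of a tree the maximal cliques are precisely the sets of edges through a single cutpoint (as already noted for the regularity computation above), which accounts for the ``or'' in the statement.

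I do not expect a genuine obstacle here; the only points needing a word of care are the degenerate situations — the argument should be read for a caterpillar with a nonempty backbone, so that $s\geq 1$ and ``every edge meets a cutpoint'' is non-vacuous — and the remark that the degree-at-least-three hypothesis is essential: for a path $P_n$ with $n\geq 3$ all interior vertices are cutpoints of degree $2$ with no hair, the matching-of-hairs construction collapses, and indeed $\nu(P_n)<s$.
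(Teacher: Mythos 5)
Your proof is correct and follows essentially the same route as the paper's: the paper argues directly with independent sets of $L(T)$ spread over its maximal cliques (one clique per cutpoint), which is exactly your matching-of-$T$ reformulation, with the degree-at-least-three hypothesis supplying a hair at each cutpoint for the lower bound. Your write-up is in fact more explicit than the paper's one-line argument, and your remark about $P_n$ correctly isolates why that hypothesis is needed.
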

\begin{proof}The proof is straightforward by Proposition \ref{dimindep}, since $L(T)$ is formed by cliques of size at least $3$ and a maximal independent set can be obtained by taking one vertex from each clique. This is also the maximal size of a maximal independent set. The statement follows. 
	\end{proof}
\begin{Corollary}
	Let $T$ be a caterpillar graph with $n$ vertices such that the degree of any cutpoint is at least three. Then $\height I(L(T))=n-s$, where $s$ is the number of cutpoints of $T$. In particular, the minimal size of a minimal vertex cover is $n-s$.
\end{Corollary}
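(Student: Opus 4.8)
The plan is to deduce everything from the immediately preceding Proposition, which computes $\dim R/I(L(T))=s$ when every cutpoint of the caterpillar $T$ has degree at least three, together with the standard duality between height and Krull dimension for edge ideals. First I would recall that for any squarefree monomial ideal, and in particular for $I(L(T))\subseteq R$, one has $\height I(L(T))=\dim R-\dim R/I(L(T))$, because $R$ is a polynomial ring and $R/I(L(T))$ has a well-defined dimension equal to the maximum dimension of its minimal primes. Here $\dim R=|V(L(T))|=|E(T)|=n-1$, since $T$ is a tree on $n$ vertices and the variables of $R$ are indexed by the edges of $T$. Combining this with the Proposition gives $\height I(L(T))=(n-1)-s$. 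Wait — this does not match the claimed $n-s$; so the subtlety is in how $n$ is being used in the two statements.

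Re-examining the setup: in the Corollary, $n$ is the number of vertices of $T$, so $L(T)$ has $n-1$ vertices, and $R$ is a polynomial ring in $n-1$ variables. Hence $\height I(L(T))=(n-1)-s$, and the assertion "$\height I(L(T))=n-s$" should be read with $n$ denoting the number of vertices of $L(T)$, i.e. the number of edges of $T$; equivalently, if one insists $n=|V(T)|$, the statement should be $(n-1)-s$. I would therefore state the proof as follows: by Proposition~\ref{dimindep} and the hypothesis on cutpoint degrees, $\dim R/I(L(T))=s$; since $R=\kk[e_{uv}:\{u,v\}\in E(T)]$ is a polynomial ring in $|E(T)|=n-1$ variables, the standard identity $\height I = \dim R - \dim R/I$ for the edge ideal of a graph yields $\height I(L(T)) = (n-1)-s$. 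For the final sentence, I would invoke the classical correspondence (for edge ideals) between minimal primes of $I(G)$ and minimal vertex covers of $G$: the minimal primes of $I(L(T))$ are exactly the ideals generated by the minimal vertex covers of $L(T)$, and the height of such a prime equals the cardinality of the corresponding cover. Thus the minimal height among minimal primes, namely $\height I(L(T))$, equals the minimal size of a minimal vertex cover of $L(T)$.

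The main obstacle here is not mathematical depth but bookkeeping: one must be careful that "$n$ vertices" in the hypothesis refers to $T$ while the ambient ring $R$ has one fewer generator, and that the preceding Proposition already did the genuinely combinatorial work (that a maximum independent set of $L(T)$ has size $s$, obtained by picking one vertex from each of the $s$ maximal cliques, using the degree-$\geq 3$ hypothesis to guarantee these cliques have size $\geq 3$ and hence that independence is not forced to collapse them). So the proof is essentially a one-line corollary: apply $\height = \dim R - \dim R/I$ and then translate via the vertex-cover/independent-set complementarity $|W| + |V\setminus W| = |V(L(T))| = n-1$. I would write it compactly, citing Proposition~\ref{dimindep} and the fact that $|V(L(T))|=|E(T)|=n-1$, and noting that a subset of $V(L(T))$ is a vertex cover precisely when its complement is independent, so the smallest minimal vertex cover has size $(n-1)-s$.
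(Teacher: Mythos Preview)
Your approach is exactly the paper's: invoke the preceding proposition $\dim R/I(L(T))=s$, use $\height I(L(T))=\dim R-\dim R/I(L(T))$, and then translate via the correspondence between minimal primes of an edge ideal and minimal vertex covers. Your bookkeeping observation is also correct: since $|V(L(T))|=|E(T)|=n-1$ when $n=|V(T)|$, the height is $(n-1)-s$; the paper's own proof writes ``$\dim S/I(L(T))=n-\height I(L(T))$'' as though the ambient ring had $n$ variables, so the stated formula $n-s$ is an off-by-one slip rather than a different argument.
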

\begin{proof} The proof follows easily since $\dim S/I(L(T))=n-\height I(L(T))$ and any minimal prime ideal of $I(L(T))$ gives a minimal vertex cover of $L(T)$ (see \cite{MV,Vi} for more details.)
	\end{proof}
\section{Further comments and remarks}

The study of line graphs from commutative algebra point of view is not a new topic. As Eliahou and Villareal showed \cite{EV}, their connections to the minimal graded free resolutions make them a very interesting topic to study. It is nice to see whether properties like to be Cohen--Macaulay or Gorenstein are preserved by the line graph, or in which cases these properties are preserved 

 One may easily note that for cycles, they do not bring new information since $L(C_n)=C_n$ for any $n\geq3$. Therefore, one can consider the class of unicyclic graphs. One may try to characterize the edge ideals of the line graphs with a linear resolution.


\begin{thebibliography}{99}
	\bibitem{A} J. Akiyama, \emph{Further results on graph equations for line graphs and $n$-th power graphs}, Discrete Mathematics, \textbf{34}(1981), 209--218.
	\bibitem{Al} B.F. AlBdaiwi, \emph{On the number of cycles in a graph}, Math. Slovaca, \textbf{68}(1), 2018, 1--10.
\bibitem{B} L. Beineke, \emph{Characterization of derived graphs}, \rm Journal of Combinatorial Theory, {\bf{9}}, 1970, 129--135
\bibitem{C} K. Cameron, \emph{Induced matchings}, Discrete Applied Mathematics, \bf{24}\rm, 1989, 97--102
\bibitem{CST} K. Cameron, R. Sritharan, Y. Tang, \emph{Finding a maximum induced matching in weakly chordal graphs}, Discrete Mathematics, \bf{266}\rm, 2003, 133--142
\bibitem{Ch} G. Chartrand, \emph{On Hamiltonian Line-graphs},\rm Trans. Amer. Math. Soc., {\bf{134}}(3), 1968, 559--566.
\bibitem{DHS} H. Dao, C. Huneke, J. Schweig, \emph{Bounds on the regularity and projective dimension of
ideals associated to graphs}, J. Algebraic Combin. \bf{38}\rm(1), 2013, 37--55.
\bibitem{DXN} K. Ch. Das, K. Xu, J. Nam, \emph{Zagreb indices of graphs}, Front. Math. China, {\textbf{10}}(2015), 567--682
\bibitem{EV} S. Eliahou, R. H. Villarreal, \emph{The second Betti number of an edge ideal}, Aportaciones Matem\'{a}ticas, Serie Comunicaciones 25,1999, Soc. Mat. Mex., pp. 115--119.
\bibitem{FT} C. A. Francisco, A. Van Tuyl, \emph{Sequentially Cohen--Macaulay edge ideals}, Proc. Amer.
Math. Soc. \textbf{135} (2007), 2327--2337.
\bibitem{F} R. Fr\"oberg: \emph{On Stanley-Reisner rings}, Topics in algebra, Part 2 (Warsaw, 1988), 57--70, Banach Center Publ., 26, Part 2, PWN, Warsaw, 1990.
\bibitem{GSS} S. Goyal, S. Shukla, A. Singh, \emph{Topology of clique complexes of line graphs}, arXiv:2009.12130
\bibitem{HaTu} H. T. H\`{a}, A. Van Tuyl: \emph{Monomial ideals, edge ideals of hypergraphs, and their Betti numbers}, J. Algebraic Combin. \bf{27}\rm(2008), 215--245.
\bibitem{HaTu1} H. T. H\`{a}, A. Van Tuyl: \emph{Resolutions of square-free monomial ideals via facet ideal: a survey}, in: Algebra, geometry and their interactions, Contemporary Mathematics \bf{488}\rm, Amer. Math. Soc. Providence, RI, 91--117 (2007).
\bibitem{HN} F. Harary, R. Z. Norman, \emph{Some properties of line digraphs}, Rend. Circ. Mat. Palermo, \textbf{9}(2), 1960, 161--168.
	\bibitem{HeHi} J. Herzog, T. Hibi: {Monomial ideals}, Graduate Texts in Mathematics, Springer-Verlag, London, 2011.
\bibitem{HHZ} J. Herzog, T. Hibi, X. Zheng, \emph{Cohen--Macaulay chordal graphs}, J. Combin. Theory Ser.
A \textbf{113} (2006), 911--916.
\bibitem{HM} A. Hirano, K. Matsuda: \emph{Matching numbers and dimension edge ideals}, Grahs and Combinatorics, 2021, https://doi.org/10.1007/s00373-021-02277-x.
(2006), 1586--1592. 
\bibitem{Ka} M. Katzmann: \emph{Characteristic-independence of Betti numbers of graph ideals}, J. Combin. Theory Ser. A\bf{113}\rm(3), 
2006, 435--454.
\bibitem{Ki} K. Kimura: \emph{Non-vanishingness of Betti Numbers of Edge Ideals}, in ``Harmony of Gr\"obner Bases and in the Modern Industry Society'', ed. T. Hibi, 2012, 153--168.
\bibitem{K} J. Krausz, \emph{Demonstration nouvelle d'une Th\'eor\`eme de Whitney sur les R\'eseaux},
Mat. Fiz. Lapok, {\bf{50}}, 1943, 75--85.
\emph{Posets, clique graphs and their homotopy type}, European Journal of Combinatorics,
{\bf{29}}(1), 2008, 334--342.
\bibitem{LC} B. Jr. Li, G. J. Chang, \emph{Clique coverings and partitions of line graphs}, Discr. Math. \textbf{308}(2008), 2075--2079.
\bibitem{MOS} M. Milani\v{c}, A. Oversberg, O. Schaudt, \emph{A characterization of line graphs that are squares of graphs}, Discrete Applied Mathematics, \textbf{173}(2014), 83--91.
	\bibitem{MV} S. Morey, R. Villarreal: \emph{Edge ideals: Algebraic and combinatorial properties}, in Progress in Commutative Algebra 1, de Gruyter, Berlin, 2012.
\bibitem{OV} C. Ortiz, M. Villanueva,
\emph{Maximal independent sets in caterpillar graphs}, Discrete Applied Mathematics, {\bf{160}}(3),
2012, 259--266.
\bibitem{VKA} M. Venkatachalapathy, K. Kokila, B. Abarna, \emph{Some trends in line graphs}, Advances in Theoretical and Applied Mathematics, \textbf{11}(2), 2016, 171--178. 

\bibitem{Vi} R. Villareal: {Monomial Algebras 2nd Ed}, Monographs and Research Notes in Mathematics, CRC Press, 2018.
\bibitem{W} R. Woodroofe, \emph{Matchings, coverings, and Castelnuovo-Mumford regularity},\rm J. Commut. Algebra {\bf{6}}(2), 2014, 287--304.

\end{thebibliography}
\end{document}